\newcommand{\NN}{\mathbb{N}}    
\newcommand{\RR}{\mathbb{R}}    
\newcommand{\F}{\mathcal{F}}    
\newcommand{\Sph}{\mathbf{S}}   
\newcommand{\Tsymb}{\top}
\newcommand{\T}{^{\Tsymb}}
\newcommand{\nozero}{\setminus\{0\}}
\def\^#1{^{(#1)}}
\def\s^#1{^{\smash{(#1)}}}
\def\:{\colon}
\newcommand{\cupdot}{\mathbin{\mathaccent\cdot\cup}}
\newcommand{\labelstyle}[1]{\upshape(\textit{#1})}
\newcommand{\mylabel}{\labelstyle{\roman*}}
\newenvironment{myenumerate}{\begin{enumerate}[label=\mylabel]}{\end{enumerate}}
\def\itm#1{\labelstyle{\romannumeral#1\relax}}
\newcommand{\myitem}[1]{%
\item[#1]\protected@edef\@currentlabel{#1}%
}
\newcommand{\freespace}{\kern.07em} 
\newcommand{\free}{\freespace\cdot\freespace} 
\newcommand{\enquote}[1]{``#1''}                                 
\newcommand{\ul}[1]{\underline{\smash{#1}}}
\newcommand{\msays}[1]{{\footnotesize\textcolor{red}{#1}}}
\newcommand{\TODO}{\msays{TODO}}
\theoremstyle{plain}  
\newtheorem{theorem}{Theorem}[section]
\newtheorem{corollary}[theorem]{Corollary}
\newtheorem{lemma}[theorem]{Lemma}
\newtheorem{proposition}[theorem]{Proposition}
\theoremstyle{definition} 
\newtheorem{definition}[theorem]{Definition}
\newtheorem{example}[theorem]{Example}
\newtheorem{remark}[theorem]{Remark}
\newtheorem{question}[theorem]{Question}
\newtheorem{observation}[theorem]{Observation}
\crefname{theorem}{Theorem}{Theorems}
\crefname{proposition}{Proposition}{Propositions}
\crefname{lemma}{Lemma}{Lemmas}
\crefname{corollary}{Corollary}{Corollaries}
\crefname{remark}{Remark}{Remarks}
\crefname{example}{Example}{Examples}
\crefname{definition}{Definition}{Definitions}
\crefname{problem}{Problem}{Problems}
\crefname{observation}{Observation}{Observation}
\crefname{construction}{Construction}{Construction}
\DeclareMathOperator{\aff}{aff}
\DeclareMathOperator{\conv}{conv}
\DeclareMathOperator{\cone}{cone}
\DeclareMathOperator{\Aut}{Aut}
\DeclareMathOperator{\Ortho}{O}
\DeclareMathOperator{\Span}{span}
\DeclareMathOperator{\Orb}{Orb}   		
\let\eps=\epsilon
\let\eset=\varnothing
\let\x=\times
\def\...{...}
\newcommand{\shortStyle}{\textit}
\newcommand{\ie}{\shortStyle{i.e.,}}
\newcommand{\eg}{\shortStyle{e.g.}}
\newcommand{\wrt}{\shortStyle{w.r.t.}}
\newcommand{\cf}{\shortStyle{cf.}}
\newcommand{\resp}{resp.}
\let\angle=\measuredangle
\renewcommand*{\eqref}[1]{%
  \hyperref[{#1}]{\textup{\tagform@{\ref*{#1}}}}%
}
\numberwithin{equation}{section}
\begin{document}


\expandafter\title
{The edge-transitive polytopes that are not vertex-transitive}
		
\author[F. Göring]{Frank Göring}
\author[M. Winter]{Martin Winter}
\address{Faculty of Mathematics, University of Technology, 09107 Chemnitz, Germany}
\email{frank.goering@mathematik.tu-chemnitz.de}
\address{Mathematics Institute, University of Warwick, Coventry CV4 7AL, United Kingdom}
\email{martin.h.winter@warwick.ac.uk}
	
\subjclass[2010]{52B15, 52B11}
\keywords{convex polytopes, symmetry of polytopes, vertex-transitive, edge-transitive}
		
\date{\today}
\begin{abstract}
In 3-dimensional Euclidean space there exist two exceptional polyhedra, the \emph{rhombic dodecahedron} and the \emph{rhombic triacontahedron}, the only known polytopes (besides polygons) that are edge-transitive without being vertex-transitive.
We show that these polyhedra do not have higher-dimensio\-nal analogues, that is, that
in dimension $d\ge 4$, edge-transitivity of convex polytopes implies vertex-transitivity.

More generally, we give a classification of all convex polytopes which at the same time have all edges of the same length, an edge in-sphere and a bipartite edge-graph.
We show that any such polytope in dimension $d\ge 4$ is vertex-transitive.



%
%
\end{abstract}

\maketitle


\section{Introduction}
\label{sec:introduction}


A $d$-dimensional (convex) polytope $P\subset\RR^d$ is the convex hull of finitely many points.
The polytope $P$ is \emph{vertex-transitive} \resp\ \emph{edge-transitive} if its (orthogonal) symmetry group $\Aut(P)\subset\Ortho(\RR^d)$ acts transitively on its vertices \resp\ edges.

It has long been known that there are exactly \emph{nine} edge-transitive polyhedra in $\RR^3$ (see \eg\ \cite{grunbaum1987edge}).
These are the five Platonic solids (tetrahedron, cube, octahedron, icosahedron and \mbox{dodecahedron}) together with the cuboctahedron, the icosidodecahedron, and their duals, the \emph{rhombic dodecahedron} and the \emph{rhombic triacontahedron} (depicted below in this roder): 
\begin{center}
\includegraphics[width=0.8\textwidth]{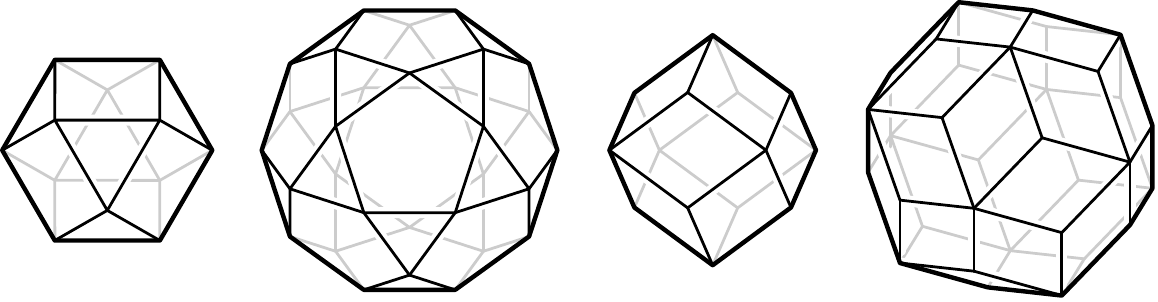}
\end{center}
Little is known about the analogous question in higher dimensions.
Branko Grünbaum writes in \enquote{Convex Polytopes} \cite[p.\ 413]{grunbaum2013convex}

\begin{quote}
No serious consideration seems to have been given to polytopes in dimension $d\ge 4$ about which transitivity of the symmetry group is assumed only for faces of suitably low dimensions, [...].
\end{quote}

Even though families of higher-dimensional edge-transitive polytopes have been studied, to the best of our knowledge, no classification of these has been achieved so far.
%
%
Equally striking, all the known examples of such polytopes in dimension at least four are simultaneously \emph{vertex-transitive}.
In dimension up to three, certain polygons (see \cref{fig:2n_gons}), as well as the rhombic dodecahedron and rhombic triacontahedron are edge- but \emph{not} vertex-transitive.
No higher dimensional example of this kind has been found.
In this paper we prove that this is not for lack of trying: 

\begin{theorem}\label{res:edge_implies_vertex}
In dimension $d\ge 4$, edge-transitivity of convex polytopes implies vertex-transitivity.
\end{theorem}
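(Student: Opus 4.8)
The plan is to use the symmetry group to force a very rigid structure on $P$, and then to show that this structure degenerates to vertex-transitivity as soon as $d\ge 4$. First I would reduce to a bipartite two-orbit situation. The edge-graph $G_P$ of a $d$-polytope is connected, and it is a classical fact that a connected graph carrying an edge-transitive but not vertex-transitive group action must be bipartite, the two color classes being exactly the two orbits of that action on the vertices. Applying this to the action of $\Aut(P)$ on $G_P$: if $P$ is edge-transitive but not vertex-transitive, then $G_P$ is bipartite with parts $V_1,V_2$, these are the two $\Aut(P)$-orbits on vertices, and every edge of $P$ joins $V_1$ to $V_2$. Translate $P$ so that its ($\Aut(P)$-fixed) centroid is the origin; then every vertex in $V_i$ has the same norm $r_i$, and since all edges have the same length $\ell$, every edge $\{u,w\}$ with $u\in V_1$, $w\in V_2$ satisfies $\langle u,w\rangle=\tfrac12(r_1^2+r_2^2-\ell^2)=:\gamma$, a constant — in particular $P$ automatically has an edge in-sphere. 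So it suffices to rule out non-vertex-transitivity under these hypotheses.

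\textit{Local rigidity.} A short orbit-counting argument — using that in a bipartite edge-transitive graph each color class is a single orbit — shows that the stabilizer $\Aut(P)_v$ acts transitively on the edges at $v$, hence on the neighbor set $N(v)$. Consequently the vertex figure at $v$ is a vertex-transitive $(d-1)$-polytope, and, for $v\in V_1$, its vertex set $N(v)\subseteq V_2$ lies on the origin-centred sphere of radius $r_2$ and in the hyperplane $H_v=\{x:\langle x,v\rangle=\gamma\}$ orthogonal to $v$, which it affinely spans (the edge directions at a vertex of a $d$-polytope span $\RR^d$). Since $\Aut(P)_v$ permutes $N(v)$ transitively while $N(v)$ spans a hyperplane, the fixed space of $\Aut(P)_v$ collapses to the single line $\RR v$; hence $\sum_{w\sim v}w\in\RR v$, giving the relations $\sum_{w\sim v}w=\lambda_1 v$ for $v\in V_1$ and $\sum_{u\sim w}u=\lambda_2 w$ for $w\in V_2$, with $\lambda_i=d_i\gamma/r_i^2$ and $d_i$ the common degree in $V_i$. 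Thus the vertex coordinates are adjacency-eigenvector-like data on $G_P$, and $\conv V_1$, $\conv V_2$ are vertex-transitive polytopes in mutually dual position — precisely how the cube and octahedron (\resp\ the dodecahedron and icosahedron) sit inside the rhombic dodecahedron (\resp\ triacontahedron).

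\textit{The dimensional obstruction.} It remains to show that this dual-pair configuration cannot be non-vertex-transitive once $d\ge 4$. The idea is to push the local analysis one dimension deeper: one argues that $H_v$ is a supporting hyperplane of $\conv V_2$, so that $\conv N(v)$ is a facet of $\conv V_2$ and, dually, $\conv N(w)$ is a facet of $\conv V_1$; then $\conv V_1$ and $\conv V_2$ are \emph{combinatorially dual} vertex-transitive polytopes, and each even carries vertex-transitive \emph{vertex figures} coming from the $2$-faces of $P$ (which alternate between $V_1$ and $V_2$). Combining this with the neighbor-sum relations and the scarcity/rigidity of vertex-transitive $(d-1)$-polytopes having only $d_i$ vertices, one aims to conclude $r_1=r_2$ together with a symmetry of $P$ exchanging $V_1$ and $V_2$, which merges the two orbits — a contradiction. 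The reason this forces vertex-transitivity only for $d\ge 4$ is that the two genuine polyhedral exceptions are sustained by the flexibility of the relevant local figures when they are low-dimensional (polygons, and — for the figures of edges — mere point pairs); for $d\ge 4$ the edge figure is itself at least a polygon that, by edge-transitivity, must recur rigidly around every edge, and it is this extra rigidity that has no non-vertex-transitive solution. Converting "the vertex figures are mutually compatible in all dimensions" into a statement sharp enough to drive a clean finite case analysis is the main obstacle; everything preceding it is essentially bookkeeping with orbit counting and elementary geometry.
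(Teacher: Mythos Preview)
Your setup in the first two paragraphs is correct and, in fact, extracts more from the symmetry than the paper ever does: the transitivity of the vertex stabilizer on neighbours (hence the eigenvector relations $\sum_{w\sim v}w=\lambda_i v$) and the vertex-transitivity of the two hulls $\conv V_i$ are genuine consequences of edge-transitivity that the paper deliberately discards. But the third paragraph is not a proof; it is a programme. You yourself write that turning the compatibility of the dual vertex figures into a sharp statement ``is the main obstacle'', and indeed nothing in the proposal closes that gap: the claim that $H_v$ supports $\conv V_2$ is asserted, not argued; the appeal to ``scarcity/rigidity of vertex-transitive $(d-1)$-polytopes having only $d_i$ vertices'' is not made precise (there is no such scarcity in general); and the final sentence about edge figures having ``no non-vertex-transitive solution'' is essentially the theorem to be proved, restated one dimension down.

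The paper takes a completely different route. It forgets the symmetry group and works in the larger, purely metric class of \emph{bipartite polytopes} (all edges of one length, edge in-sphere, bipartite edge-graph). The decisive structural fact is that every face of a bipartite polytope is again bipartite (\cref{res:faces_of_bipartite}); this is what permits descent in dimension, and it fails for edge-transitivity itself. The problem then reduces to $d=3$: a lengthy but elementary case analysis of admissible vertex types (all of \cref{sec:bipartite_polyhedra}) shows that the only \emph{strictly} bipartite polyhedra are the rhombic dodecahedron and triacontahedron. The step to $d\ge4$ is a single line: those two solids have dihedral angles $120^\circ$ and $144^\circ$, and at least three cells must meet along each edge of a $4$-polytope, which is already impossible. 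So the ``dimensional obstruction'' you are searching for is not a statement about dual pairs of vertex-transitive polytopes; it is a dihedral-angle incompatibility, and it only becomes visible after the hard $3$-dimensional classification has been carried out.
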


As immediate consequence, we obtain the classification of all polytopes that are edge- but not vertex-transitive.
The list is quite short:


\begin{figure}
\centering
\includegraphics[width=0.75\textwidth]{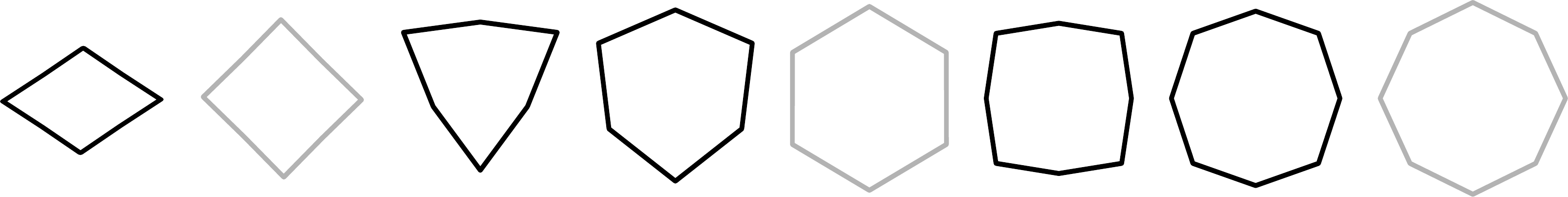}
\caption{Some examples of edge-transitive $2n$-gons with $2n\in\{4,6,8\}$ (the same works for all $n$). The polygons depicted with black boundary are not vertex-transitive.}
\label{fig:2n_gons}
\end{figure}

\begin{corollary}
If $P\subset\RR^d,d\ge 2$ is edge- but not vertex-transitive, then $P$ is one of the following:
\begin{myenumerate}
\item a non-regular $2k$-gon (see \cref{fig:2n_gons}),
\item the rhombic dodecahedron, or
\item the rhombic triacontahedron.
\end{myenumerate}
\end{corollary}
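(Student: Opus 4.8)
The plan is to prove the corollary by splitting into three cases according to the dimension $d\ge 2$, since each of them reduces to an ingredient that is either already available or entirely elementary. For $d\ge 4$ there is nothing to do: \cref{res:edge_implies_vertex} says that an edge-transitive polytope of dimension at least four is automatically vertex-transitive, so no such $P$ exists and dimensions $d\ge 4$ contribute no entries to the list. It therefore remains to treat $d=3$ and $d=2$ separately.

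For $d=3$ I would invoke the classical enumeration of edge-transitive polyhedra (see \eg\ \cite{grunbaum1987edge}): the edge-transitive polyhedra in $\RR^3$ are exactly the five Platonic solids, the cuboctahedron, the icosidodecahedron, and the two Catalan solids dual to the latter two, \ie\ the rhombic dodecahedron and the rhombic triacontahedron. The first seven of these are vertex-transitive (the Platonic solids being regular, and the cuboctahedron and icosidodecahedron being quasiregular, hence vertex-transitive), whereas the rhombic dodecahedron and the rhombic triacontahedron each carry two orbits of vertices — already distinguished by the vertex degrees $3$ and $4$ (\resp\ $3$ and $5$) — and hence fail to be vertex-transitive. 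This yields items \ls{ii} and \ls{iii}.

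For $d=2$ one argues directly with the dihedral symmetry group. Let $P$ be a convex $n$-gon that is edge-transitive. Since the elements of $\Aut(P)\subset\Ortho(\RR^2)$ are isometries, transitivity on edges forces all $n$ edges to have a common length, and $G\defeq\Aut(P)$ embeds into the dihedral group of order $2n$. Let $R\defeq G\cap\SOrtho(\RR^2)$ be its rotation subgroup, cyclic of order $m$ with $m\mid n$. The unique fixed point of a non-trivial rotational symmetry of $P$ is the centroid of $P$, which lies in the interior of $P$; hence no non-trivial element of $R$ can fix an edge of $P$ (its midpoint would be the centroid) or a vertex, so $R$ acts freely on the $n$ edges and on the $n$ vertices, with orbits of size $m$. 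If $R$ were transitive on edges we would get $m=n$, but then $R$ would also be transitive on vertices, contradicting the hypothesis; hence $G\neq R$, so $G$ is dihedral, and the $G$-orbit of an edge is a union of at most two $R$-orbits, which gives $n\in\{m,2m\}$. Since $n=m$ is excluded as before, $n=2m$. Thus $P$ has exactly two $R$-orbits of vertices, each a regular $m$-gon, of circumradii $r_1$ and $r_2$ say; since $G$ must not merge these two orbits, every reflection in $G$ is a symmetry axis of both of these regular $m$-gons, and a short computation with the $m$ reflection axes forces the $2m$ vertices of $P$ to be equally spaced in angle, so that $P$ is obtained from a regular $2m$-gon by placing consecutive vertices alternately on the circles of radius $r_1$ and $r_2$ (\cf\ \cref{fig:2n_gons}). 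Conversely every such $2k$-gon is indeed edge-transitive, and it is vertex-transitive exactly when $r_1=r_2$, \ie\ when it is regular; so the edge- but not vertex-transitive polygons are precisely the non-regular $2k$-gons, giving item \ls{i}.

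\textbf{Main obstacle.} The corollary is essentially a bookkeeping exercise: the two genuinely substantial inputs, \cref{res:edge_implies_vertex} for $d\ge 4$ and the enumeration of the nine edge-transitive polyhedra for $d=3$, are both available off the shelf, and the only step that still requires an argument is the planar analysis above — where the sole point needing care is to verify that no reflection can fuse the two vertex orbits, which is exactly what pins down the alternating-circumradius shape. I do not expect any serious difficulty here.
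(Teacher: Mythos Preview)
Your proposal is correct and follows exactly the decomposition the paper intends: the corollary is presented there as an immediate consequence of \cref{res:edge_implies_vertex} for $d\ge 4$, together with the classical list of nine edge-transitive polyhedra (cited in the introduction) for $d=3$, and the elementary planar case for $d=2$, where the paper simply points to \cref{fig:2n_gons} without further argument. Your dihedral-group analysis for $d=2$ is more detailed than anything the paper supplies, but it is sound and reaches the same conclusion.
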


\Cref{res:edge_implies_vertex} is proven by embedding the class of edge- but not vertex-transitive polytopes in a larger class of polytopes, defined by geometric regularities instead of symmetry.
In \cref{res:trans_is_bipartite} we show that a polytope $P\subset\RR^d$ which is edge- but not vertex-transitive must have all of the following properties:
\begin{myenumerate}
	\item all edges are of the same length,
	\item it has a bipartite edge-graph $G_P=(V_1\cupdot V_2,E)$, and
	\item there are radii $r_1\le r_2$, so that $\|v\|=r_i$ for all $v\in V_i$. 
\end{myenumerate}
We compile this into a definition: a polytope that has these three properties shall be called \emph{bipartite} (\cf\ \cref{def:bipartite}).
The edge- but not vertex-transitive polytopes then form a subclass of the bipartite polytopes, but the class of bipartite polytopes is much better behaved.
For example, faces of bipartite polytopes are bipartite (\cref{res:faces_of_bipartite}), something which is not true for edge/vertex-transitive polytopes\footnote{For example, consider a vertex-transitive but not uniform antiprism. Its faces are non-regular triangles, which are thus not vertex-transitive. Alternatively, consider the $(n,n)$-duoprism, $n\not=4$, that is, the cartesian product of a regular $n$-gon with itself. This polytope is edge-transitive, but its facets are $n$-gonal prisms (the cartesian product of a regular $n$-gon with an edge), which are not edge-transitive.}.
\mbox{Our quest is then to classify all bipartite polytopes.
The~sur}\-prising result: already being bipartite is very restrictive:

\begin{theorem}
If $P\subset\RR^d,d\ge 2$ is bipartite, then $P$ is one of the following:
\begin{myenumerate}
\item an edge-transitive $2k$-gon (see \cref{fig:2n_gons}),
\item the rhombic dodecahedron,
\item the rhombic triacontahedron, or 
\item a $\Gamma$-permutahedron for some finite reflection group $\Gamma\subset\Ortho(\RR^d)$ (see \cref{def:permutahedron}; some 3-dimensional examples are shown in \cref{fig:permutahedron}).
\end{myenumerate}
\end{theorem}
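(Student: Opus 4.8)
The plan is to prove this by induction on the dimension $d$, exploiting that every face of a bipartite polytope is again bipartite (\cref{res:faces_of_bipartite}): in particular every $2$-face and every facet of $P$ is bipartite, and a facet of $P$ is a bipartite polytope of dimension $d-1$ in its affine hull, since the edge in-sphere of $P$ restricts to an edge in-sphere of the facet. Throughout I would normalise so that the edge in-sphere is centred at the origin with radius $\rho$, write $\ell$ for the common edge length and $r_1\le r_2$ for the two vertex radii, and record the Gram relations that this forces: writing $a_i:=\sqrt{r_i^2-\rho^2}$ and letting $i(v)\in\{1,2\}$ denote the part of a vertex $v$, one computes from the tangency condition that $\langle v,v\rangle=\rho^2+a_{i(v)}^2$ and $\langle v,w\rangle=\rho^2-a_1a_2$ for every edge $\{v,w\}$. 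The base case $d=2$ is a short computation: a bipartite $2k$-gon is equilateral with vertices alternating between two concentric circles, convexity and the two radii force each edge to subtend the angle $\pi/k$ at the centre, so the two colour classes are concentric regular $k$-gons and $P$ is an edge-transitive $2k$-gon. The same local analysis shows that in every dimension each $2$-face $F$ of $P$ is an edge-transitive $2k$-gon carrying an edge in-circle; such a polygon has interior angles taking only two values $\beta_1^F,\beta_2^F$, with $\beta_{i(v)}^F$ occurring at a vertex $v$, and $\beta_1^F=\beta_2^F$ precisely when $F$ is a regular polygon.

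For the inductive step I would split according to whether every $2$-face of $P$ is a regular polygon. Suppose first that it is. An inductive angle-count (the facets being simple permutahedra by the induction hypothesis, and at a vertex of degree exceeding $d$ the regular faces over-filling the local solid angle) shows that $P$ is simple, so at each vertex $v$ the $d$ edge-directions $u_1,\dots,u_d$ pair up into $2$-faces, and the interior angle of the $2$-face spanned by $e_i,e_j$ being $\pi-\pi/m_{ij}$ for some $m_{ij}\in\NN$ translates into $\langle u_i,u_j\rangle=-\cos(\pi/m_{ij})$ — exactly the Gram relations of the simple roots of a Coxeter system. The reflections in the hyperplanes $u_i^\perp$ therefore generate a group $\Gamma_v$ which, acting on the bounded polytope $P$, is a finite reflection group; applying the induction hypothesis to the facets at $v$ (lower-dimensional permutahedra, or regular $2k$-gons when $d=3$) forces these local data to be compatible along edges, so that the $\Gamma_v$ glue into a single finite reflection group $\Gamma\subset\Ortho(\RR^d)$ acting on $P$, with $P$ the orbit of a point equidistant from all reflection hyperplanes of $\Gamma$; that is, $P$ is a $\Gamma$-permutahedron.

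Now suppose some $2$-face $F$ of $P$ is not regular, so that $r_1<r_2$ on $F$. If $d\ge 5$, a facet $Q\supseteq F$ is a bipartite polytope of dimension $\ge 4$ with a non-regular $2$-face, hence by the induction hypothesis a permutahedron, which is regular-faced — a contradiction. If $d=4$, the same facet $Q$ is a $3$-dimensional bipartite polytope with a non-regular $2$-face, so by induction $Q$ is a rhombic dodecahedron or a rhombic triacontahedron; one then checks, via a local angle argument at a vertex of $Q$, that the rhombic (resp.\ golden-rhombic) faces at that vertex together with any further $2$-faces of $P$ there cannot be arranged as the vertex figure of a convex $4$-polytope, again a contradiction. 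Hence $d=3$, and here the angle-sum inequalities $\sum_{F\ni v}\beta_{i(v)}^F<2\pi$ at the $V_1$- and $V_2$-vertices, together with the Gram relations above and the fact that exactly two $2$-faces meet at each edge, leave only finitely many admissible vertex configurations; running through them and discarding the regular-faced ones already covered by the first case isolates exactly the rhombic dodecahedron and the rhombic triacontahedron.

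The main obstacle is the gluing step in the regular-faced case: passing from the purely local Coxeter data $\Gamma_v$ at the individual vertices to one global finite reflection group and proving that $P$ is genuinely its permutahedron — rather than merely locally indistinguishable from one — is a rigidity statement, and controlling how the $\Gamma_v$ fit together across all of $P$ is where the real work lies. A secondary difficulty is excluding a rhombic-dodecahedron or rhombic-triacontahedron facet in dimension $4$, which is precisely the obstruction that must be removed in order to rule out a four-dimensional analogue of those two polyhedra.
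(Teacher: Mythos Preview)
Your case split into ``all $2$-faces regular'' versus ``some $2$-face non-regular'' is equivalent to the paper's split $r_1=r_2$ versus $r_1<r_2$ (by \cref{res:faces_of_bipartite} a face of a strictly bipartite polytope is strictly bipartite), so the overall architecture matches. The differences lie in how each branch is handled.

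In the regular-faced (equivalently, inscribed) branch your approach is genuinely different and, as you yourself flag, incomplete. The paper does not attempt any local-to-global Coxeter argument: it simply observes that $r_1=r_2$ makes $P$ an inscribed polytope with a single edge length and centrally symmetric $2$-faces, \ie\ an inscribed zonotope with all edges of one length, and then invokes a black-box classification of exactly such objects (\cref{res:classification_inscribed_zonotopes}) as $\Gamma$-permutahedra. Your gluing problem --- passing from the local Coxeter data $\Gamma_v$ to one global finite reflection group with $P$ as its orbit polytope --- is essentially the content of that cited result, and is not something one can dispatch in a sentence; nor does your sketch justify why $P$ must be simple in the first place.

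Your strictly-bipartite branch in dimensions $d\ge 4$ is essentially correct and matches the paper (which argues via dihedral angles at an edge rather than via the vertex figure, but the obstruction is the same). However, you seriously underestimate the $3$-dimensional case. ``Running through'' finitely many admissible vertex configurations does not cleanly isolate the two rhombic polyhedra: the paper devotes all of \cref{sec:bipartite_polyhedra} to this, and needs spherical-polyhedron techniques, a systematic elimination of ``adjacent pairs'' of vertex types, the classification of rhombic isohedra, and finally an explicit coordinate computation to rule out a genuine near-miss --- a polyhedron with vertex types $(4,4,6)$, $(4,6,4,6)$ and $(4^5)$ that fails to be bipartite by roughly $0.4\%$. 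No angle-sum inequality alone detects that last obstruction.
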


\begin{figure}[h!]
\centering
\includegraphics[width=0.6\textwidth]{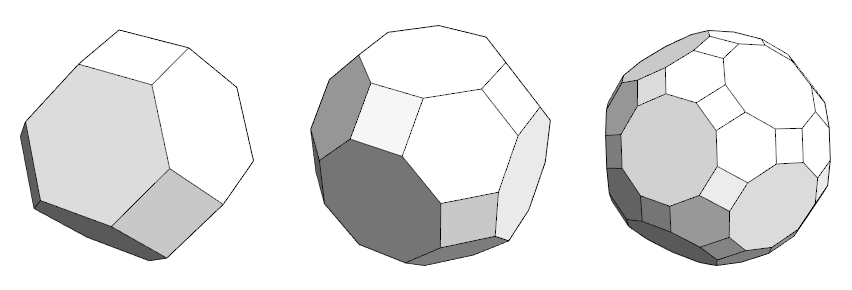}
\caption{From left to right: the $A_3$-, $B_3$ and $H_3$-permutahedron.}
\label{fig:permutahedron}
\end{figure}

The $\Gamma$-permutahedra are vertex-transitive, and all the other entries in the list are of dimension $d\le 3$. This immediately implies \cref{res:edge_implies_vertex}.

Remarkably, despite the definition of bipartite polytope being purely geometric, all bipartite polytopes are highly symmetric, that is, at least vertex- or facet-transitive, and sometimes even edge-transitive.

\subsection{Overview}

In \cref{sec:bipartite_polytopes} we introduce the central notion of \emph{bipartite \mbox{polytope}} and prove its most relevant properties: that being bipartite generalizes being edge- but not vertex-transitive, and that all faces of bipartite polytopes are again bipartite.
We then investigate certain subclasses of bipartite polytopes: bipartite polygons and inscribed bipartite polytopes.
We prove that the latter coincide with the $\Gamma$-permutahedra, a class of vertex-transitive polytopes.
It therefore remains to classify the non-inscribed cases, the so-called \emph{strictly} bipartite polytopes.
We show that the classification of these reduces to the classification of bipartite \emph{polyhedra}, \ie\ the case $d=3$. 

From \cref{sec:bipartite_polyhedra} on the investigation is focused on the class of strictly bipartite polyhedra.
We successively determine restrictions on the structure of such, \eg\ the degrees of their vertices and the shapes of their faces.
This quite elaborate process uses many classical geometric results and techniques, including spherical polyhedra, the classification of rhombic isohedra and the realization of graphs as edge-graphs of polyhedra.
As a result, we can exclude all but two cases, namely, the rhombic dodecahedron, and the rhombic triacontahedron.
Additionally, we shall find a remarkable near-miss, that is, a polyhedron which fails to be bipartite only by a tiny (but quantifiable) amount.

\section{Bipartite polytopes}
\label{sec:bipartite_polytopes}

From this section on let $P\subset\RR^d,d\ge 2$ denote a $d$-dimensional polytope of full dimension (\ie\ $P$ is not contained in a proper affine subspace).
By $\F(P)$ we denote the face lattice of $P$, and by $\F_\delta(P)\subset\F(P)$ the subset of $\delta$-dimensional faces.

\begin{definition}\label{def:bipartite}
$P$ is called \emph{bipartite}, if
\begin{myenumerate}
	\item all its edges are of the same length $\ell$,
	\item its edge-graph is bipartite, which we write as $G_P=(V_1\cupdot V_2,E)$, and
	
	\item there are radii $r_1\le r_2$ so that $\|v\|=r_i$ for all $v\in V_i$.
\end{myenumerate}
If $r_1<r_2$, then $P$ is called \emph{strictly bipartite}.
A vertex $v\in V_i$ is called an \emph{$i$-vertex}.
The numbers $r_1$, $r_2$ and $\ell$ are called the \emph{parameters} of a bipartite polytope.
\end{definition}

\begin{remark}
\label{rem:abuse_of_notation}
Since $P$ is full-dimensional by convention, \cref{def:bipartite} only defines \emph{full-dimensional} bipartite polytopes.
%

To extend this notion to not necessarily full-dimensional polytopes, we shall call a polytope \emph{bipartite} even if it is just bipartite as a subset of its affine hull where we made an appropriate choice of origin in the affine hull (note that whether a polytope is bipartite depends on its placement relative to the origin and that there is at most one such placement if the polytope is full-dimensional).
This comes in handy when we discuss faces of bipartite polytopes. 

\end{remark}

\begin{remark}
\label{rem:alternative_definiton}
An alternative definition of bipartite polytope would replace \itm3 by the condition that $P$ has an \emph{edge in-sphere}, that is, a sphere that touches each edge of $P$ in a single point (this definition was used in the abstract). 
\mbox{The configuration~de}\-picted below (an edge of $P$ connecting two vertices $v_1\in V_1$ and $v_2 \in V_2$) shows~how any one of the four quantities $r_1,r_2,\ell$ and $\rho$ (the radius of the edge in-sphere) is determined from the other three by solving the given set of equations:
\begin{center}
\includegraphics[width=0.35\textwidth]{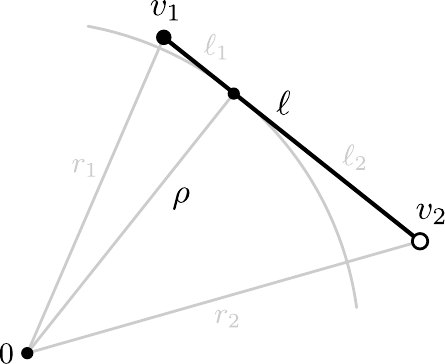}
\qquad\quad\raisebox{5em}{$\displaystyle
\begin{array}{rcl}
\rho^2+\ell_1^2&\!\!\!\!=\!\!\!\!\!&r_1^2\\[0.3ex]
\rho^2+\ell_2^2&\!\!\!\!=\!\!\!\!\!&r_2^2\\[0.3ex]
\ell_1+\ell_2&\!\!\!\!=\!\!\!\!\!&\ell
\end{array}
$}
\end{center}

There is a subtlety: for the edge in-sphere to actually touch the edge (rather than only its affine hull outside of the edge) it is necessary that the perpendicular projection of the origin onto the edge ends up inside the edge (equivalently, that the triangle $\conv\{0,v_1,v_2\}$ is acute at $v_1$ and $v_2$). One might regard this as intuitively clear since we are working with convex polytopes, but this will also follows formally as part of our proof of \cref{res:0_in_P} (as we shall mention there in a footnote).

This alternative characterization of bipartite polytopes via edge in-spheres will become relevant towards the end of the classification (in \cref{sec:446}).
Still, for the larger part of our investigation, \cref{def:bipartite} \itm3 is the more convenient version to work with.
\end{remark}

\subsection{General obsevations}

\begin{proposition}\label{res:trans_is_bipartite}
If $P$ is edge- but not vertex-transitive, then $P$ is bipartite.
\end{proposition}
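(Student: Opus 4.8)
The plan is to use edge-transitivity directly, without appealing to any of the later machinery. First I would normalize the position of $P$: translate it so that the centroid of its vertex set lies at the origin. Since a Euclidean symmetry of $P$ maps the vertex set to itself, it fixes the centroid and hence the origin, so $\Aut(P)$ indeed sits in $\Ortho(\RR^d)$ and the norms appearing in \cref{def:bipartite} \itm3 are meaningful. With this setup, property \itm1 is immediate: $\Aut(P)$ permutes the edges transitively and acts by isometries, so all edges share a common length $\ell$.

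The core of the proof is property \itm2, and the decisive observation will be that edge-transitivity allows at most two vertex orbits. I would fix an edge $e = \{u,v\}$, write $O_u, O_v$ for the $\Aut(P)$-orbits of its endpoints, and note that for any other edge $e' = \{u',v'\}$ a symmetry $\phi$ with $\phi(e) = e'$ satisfies $\{O_{u'}, O_{v'}\} = \{O_u, O_v\}$ as sets of orbits. Since every vertex of $P$ lies on at least one edge, this forces the vertex set to equal $O_u \cup O_v$, hence there are at most two orbits; as $P$ is not vertex-transitive there are exactly two, and we may put $V_1 := O_u \ne O_v =: V_2$. The same displayed identity then says that every edge joins $V_1$ to $V_2$, i.e. the edge-graph $G_P = (V_1 \cupdot V_2, E)$ is bipartite.

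For property \itm3 I would simply invoke that $\Aut(P) \subset \Ortho(\RR^d)$ preserves $\|\cdot\|$, so this quantity is constant on $V_1$ and on $V_2$; call the two values $r_1$ and $r_2$, swapping the labels of the two parts if needed so that $r_1 \le r_2$. Collecting \itm1--\itm3 shows that $P$ is bipartite.

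I do not anticipate a genuine obstacle here: the only places that call for care are the choice of origin (needed both for the orthogonality of $\Aut(P)$ and for \cref{def:bipartite} \itm3 to make sense) and the elementary fact that every vertex of a polytope lies on an edge, which is what turns "at most two orbits among edge endpoints" into "at most two vertex orbits". This proposition is best viewed as the bridge from the symmetry hypothesis to the purely geometric class of bipartite polytopes; the substantive work lies in the classification that follows.
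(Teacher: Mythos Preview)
Your argument is correct and follows essentially the same route as the paper: fix an edge, take the $\Aut(P)$-orbits of its two endpoints, use edge-transitivity (and the fact that every vertex lies on an edge) to see these orbits cover $\F_0(P)$, use non-vertex-transitivity to see they are distinct, and read off bipartiteness and the radii from orthogonality. The only cosmetic difference is that you make the choice of origin explicit, whereas the paper has already built $\Aut(P)\subset\Ortho(\RR^d)$ into its standing conventions.
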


This is a geometric analogue to the well known fact that every edge- but not vertex-transitive \emph{graph} is bipartite.
A proof of the graph version can be found in \cite{godsil1978graphs}.
The following proof can be seen as a geometric analogue:

\begin{proof}[Proof of \cref{res:trans_is_bipartite}]
Clearly, all edges of $P$ are of the same length.

Fix some edge $e\in\F_1(P)$ with end vertices $v_1,v_2\in\F_0(P)$.
Let $V_i$ be the orbit of $v_i$ under $\Aut(P)$.
We prove that $V_1\cup V_2=\F_0(P)$, $V_1\cap V_2=\eset$ and that the edge graph $G_P$ is bipartite with partition $V_1\cupdot V_2$.

Let $v\in \F_0(P)$ be some vertex and $\tilde e\in\F_1(P)$ an incident edge.
By edge-transitivity, there is a symmetry $T\in\Aut(P)$ that maps $\tilde e$ onto $e$, and therefore maps $v$ onto $v_i$ for some $i\in\{1,2\}$.
Thus, $v$ is in the orbit $V_i$.
This holds for all vertices of $P$, and therefore $V_1\cup V_2=\F_0(P)$.

The orbits of $v_1$ and $v_2$ must either be identical or disjoint.
Since $V_1\cup V_2=\F_0(P)$, from $V_1=V_2$ it would follow $V_1=\F_0(P)$, stating that $P$ has a single orbit of vertices.
But since $P$ is \emph{not} vertex-transitive, this cannot be.
Thus, $V_1\cap V_2=\eset$, and therefore $V_1\cupdot V_2=\F_0(P)$.

Let $\tilde e\in\F_1(P)$ be an edge with end vertices $\tilde v_1$ and $\tilde v_2$.
By edge-transitivity, $\tilde e$ can be mapped onto $e$ by some symmetry $T\in\Aut(P)$.
Equivalently $\{T\tilde v_1,T\tilde v_2\}=\{v_1,v_2\}$.
Since $v_1$ and $v_2$ belong to different orbits under $\Aut(P)$, so do $\tilde v_1$ and $\tilde v_2$.
Hence $\tilde e$ has one end vertex in $V_1$ and one end vertex in $V_2$.
This holds for all edges, and thus, $G_P$ is bipartite with partition $V_1\cupdot V_2$.

It remains to determine the radii $r_1\le r_2$.
Set $r_i:=\|v_i\|$ (assuming w.l.o.g.\ that $\|v_1\|\le\|v_2\|$). 
Then for every $v\in V_i$ there is a symmetry $T\in\Aut(P)\subset\Ortho(\RR^d)$ so that $Tv_i=v$, and thus
$$\|v\|=\|T v_i\| = \|v_i\|=r_i.$$
%
\end{proof}

Bipartite polytopes are more comfortable to work with than edge- but not vertex-transitive polytopes because their faces are again bipartite polytopes (in the sense as explained in \cref{rem:abuse_of_notation}). 
Later, this will enable us to reduce the problem to an investigation in lower dimensions.

\begin{proposition}\label{res:faces_of_bipartite}
Let $\sigma\in\F(P)$ be a face of $P$. Then it holds
\begin{myenumerate}
	\item if $P$ is bipartite, so is $\sigma$.
	\item if $P$ is strictly bipartite, then so is $\sigma$, and $v\in\F_0(\sigma)\subseteq\F_0(P)$ is an $i$-vertex in $P$ if and only if it is an $i$-vertex in $\sigma$.
	\item if $r_1\le r_2$ are the radii of $P$ and $\rho_1\le \rho_2$ are the radii of $\sigma$, then there holds
	$$h^2 + \rho_i^2 = r_i^2,$$
	where $h$ is the height of $\sigma$, that is, the distance of $\aff(\sigma)$ from the origin.
\end{myenumerate}
%
%
\begin{proof}

Properties clearly inherited by $\sigma$ are that all edges are of the same length and that the edge graph is bipartite.
It remains to show the existence of the radii $\rho_1\le \rho_2$ compatible with the bipartition of the edge-graph of $\sigma$.

Let $c\in\aff(\sigma)$ be the orthogonal projection of $0$ onto $\aff(\sigma)$.
Then $\|c\|=h$, the height of $\sigma$ as defined in $(iii)$.
For any vertex $v\in\F_0(\sigma)$ which is an $i$-vertex in $P$, the triangle $\Delta:=\conv\{0,c,v\}$ has a right angle at $c$.
Set $\rho_i:=\|v-c\|$ and observe
\begin{equation}
\quad \rho_i^2:=\|v-c\|^2 = \|v\|^2-\|c\|^2 = r_i^2-h^2. 
\tag{$*$}
\end{equation}
In particular, the value $\rho_i$ does only depend on $i$.
In other words, $\sigma$ is a bipartite polytope when considered as a subset of its affine hull, where the origin is chosen to be $c$ (\cf\  \cref{rem:abuse_of_notation}).
This proves $(i)$, and $(*)$ is equivalent to the equation in $(iii)$.
From $(*)$ also follows $r_1< r_2\Leftrightarrow \rho_1< \rho_2$, which proves $(ii)$.
\end{proof}
\end{proposition}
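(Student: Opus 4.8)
The plan is to show that every face $\sigma$ of a bipartite polytope $P$ is itself bipartite (in the sense of \cref{rem:abuse_of_notation}) by exhibiting an appropriate origin in $\aff(\sigma)$ and verifying the three defining conditions. First I would note that two of the three conditions of \cref{def:bipartite} are immediate: all edges of $\sigma$ are edges of $P$, hence of common length $\ell$, and the edge-graph $G_\sigma$ is an induced subgraph of the bipartite graph $G_P$, hence bipartite (with bipartition classes obtained by restricting $V_1\cupdot V_2$ to $\F_0(\sigma)$). The entire content is therefore the third condition: producing radii $\rho_1\le\rho_2$ measured from a suitable center $c\in\aff(\sigma)$.

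The key step is to take $c$ to be the orthogonal projection of the origin $0$ onto the affine subspace $\aff(\sigma)$, and set $h:=\|c\|$. Since $\aff(\sigma)$ is orthogonal to the vector $c-0$, for any vertex $v\in\F_0(\sigma)$ the triangle $\conv\{0,c,v\}$ has a right angle at $c$, so the Pythagorean identity gives $\|v-c\|^2=\|v\|^2-\|c\|^2=\|v\|^2-h^2$. If $v$ is an $i$-vertex of $P$, then $\|v\|=r_i$, so $\|v-c\|^2=r_i^2-h^2$ depends only on $i$; defining $\rho_i:=\sqrt{r_i^2-h^2}$ one obtains the desired radii, and the displayed equation $h^2+\rho_i^2=r_i^2$ of part (iii) is exactly this identity rearranged. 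This simultaneously establishes (i) (translating $\sigma$ so that $c$ becomes the origin yields a bipartite polytope), gives the explicit relation in (iii), and — since $h$ is the same for both classes and $t\mapsto t-h^2$ is strictly monotone — shows $r_1<r_2\Leftrightarrow\rho_1<\rho_2$, proving that the $i$-vertex labelling of $\sigma$ agrees with that of $P$ and that strict bipartiteness is inherited, which is (ii).

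I do not expect a serious obstacle here; the argument is essentially a single application of the Pythagorean theorem once the right center is chosen. The only point requiring a little care is the bookkeeping for (ii): one must observe that the labelling of vertices of $\sigma$ as $1$- or $2$-vertices, which \emph{a priori} could be read off either from $P$ (via $\|v\|$) or intrinsically from $\sigma$ (via $\|v-c\|$), is consistent — and this follows because both orderings are governed by the single monotone shift $t\mapsto t-h^2$. One might also worry whether $r_i^2-h^2>0$, i.e.\ whether the spheres actually meet $\aff(\sigma)$ in a genuine sphere rather than a point or nothing; but since $\sigma$ has at least one vertex $v$ with $\|v-c\|^2=r_i^2-h^2$ for the relevant $i$, non-negativity is automatic, and for the lower radius one can note $\sigma$ is full-dimensional in $\aff(\sigma)$ so it cannot be a single point unless it is a vertex (a degenerate but harmless case). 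No estimates or case analysis beyond this are needed.
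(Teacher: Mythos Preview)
Your proposal is correct and follows essentially the same argument as the paper: take $c$ to be the orthogonal projection of the origin onto $\aff(\sigma)$, apply the Pythagorean theorem to the right triangle $\conv\{0,c,v\}$ to obtain $\rho_i^2=r_i^2-h^2$, and read off all three parts from this single identity. The only difference is cosmetic --- you spell out the monotonicity step for (ii) and the non-negativity of $r_i^2-h^2$ a bit more explicitly than the paper does.
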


\iftrue


The following observation will be of use later on.

\begin{observation}\label{res:angles_between_vertices}
Given two adjacent vertices $v_1,v_2\in\F_0(P)$ with $v_i\in V_i$, and if $P$ has parameters $r_1,r_2$ and $\ell$, then
$$\ell^2 = \|v_1-v_2\|^2 = \|v_1\|^2+\|v_2\|^2 - 2\<v_1,v_2\> = r_1^2+r_2^2-2 r_1 r_2 \cos\angle(v_1,v_2),$$ 
%
%
This can be rearranged for $\cos\angle(v_1,v_2)$. While the exact value  of this expression is not of relevance to us, this shows that this angle is determined by the parameters and does not depend on the choice of the adjacent vertices $v_1$ and $v_2$.

\end{observation}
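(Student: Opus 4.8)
The plan is a one-line direct computation — essentially the law of cosines applied to the triangle $\conv\{0,v_1,v_2\}$. First I would invoke \cref{def:bipartite}: since $v_1v_2$ is an edge, property \itm1 gives $\|v_1-v_2\|=\ell$, and since $v_i\in V_i$, property \itm3 gives $\|v_i\|=r_i$. Expanding the squared Euclidean norm by bilinearity of the inner product yields $\|v_1-v_2\|^2=\|v_1\|^2+\|v_2\|^2-2\langle v_1,v_2\rangle$, and writing $\langle v_1,v_2\rangle=\|v_1\|\,\|v_2\|\cos\angle(v_1,v_2)=r_1 r_2\cos\angle(v_1,v_2)$ (using that both $v_1,v_2$ are nonzero, so the angle is defined) assembles the stated chain of equalities.

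For the concluding remark I would simply solve the resulting identity for the cosine,
$$\cos\angle(v_1,v_2)=\frac{r_1^2+r_2^2-\ell^2}{2r_1 r_2},$$
which exhibits the angle as a function of the parameters $r_1,r_2,\ell$ alone; in particular it is independent of the chosen adjacent pair $v_1,v_2$ (here $r_1,r_2>0$, so the denominator does not vanish). There is no genuine obstacle: the entire content is elementary Euclidean geometry, and I expect the proof in the paper to be a single displayed computation matching the chain of equalities already printed in the statement.
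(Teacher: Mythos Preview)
Your proposal is correct and matches the paper's approach exactly: the observation is self-contained, with the chain of equalities being the entire argument (law of cosines via bilinear expansion of $\|v_1-v_2\|^2$), and the paper does not supply a separate proof beyond what is already displayed in the statement.
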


\subsection{Bipartite polygons}

The easiest to describe (and to explicitly construct) are the bipartite \emph{polygons}.

Foremost, the edge-graph is bipartite, and thus, a bipartite polygon must be a $2k$-gon for some $k\ge 2$.
One can show that the bipartite polygons are exactly the edge-transitive $2k$-gons (\cf\ \cref{fig:2n_gons}), and that such one is \emph{strictly} bipartite if and only if it is \emph{not} vertex-transitive (or equivalently, not regular).
We will not make use of these symmetry properties of bipartite polygons.

The parameters $r_1, r_2$ and $\ell$ uniquely determine a bipartite polygon, as can be seen by explicit construction:
\begin{center}
\includegraphics[width=0.98\textwidth]{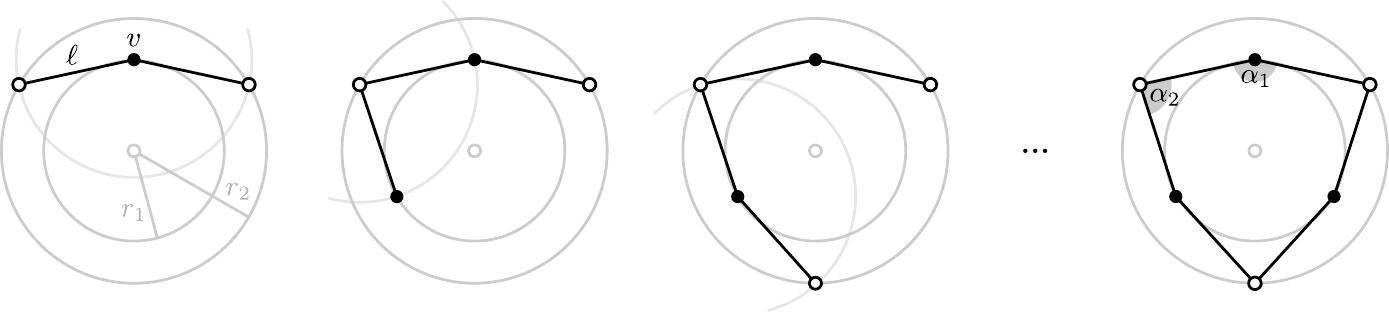}
\end{center}
One starts with an arbitrary 1-vertex $v\in\RR^2$ placed on the circle $\Sph_{r_1}(0)$.
Its~neigh\-boring vertices are then uniquely determined as the intersections $\Sph_{r_2}(0)\cap\Sph_\ell(v)$.
The procedure is repeated with the new vertices until the edge cycle closes (which only happens if the parameters are chosen appropriately).

The procedure also makes clear that the interior angle $\alpha_i\in(0,\pi)$ at an $i$-vertex only depends on $i$, but not on the chosen vertex $v\in V_i$.


\begin{corollary}\label{res:2k_gons}
A bipartite polygon $P\subset\RR^2$ is a $2k$-gon with alternating interior angles $\alpha_1,\alpha_2\in(0,\pi)$ ($\alpha_i$ being the interior angle at an $i$-vertex), and its shape is uniquely determined by its parameters (up to congruence).
\end{corollary}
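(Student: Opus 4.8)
The plan is to establish the three claims in turn --- that $P$ is a $2k$-gon, that its interior angles alternate, and that its shape is pinned down by $r_1,r_2,\ell$ --- using the explicit construction sketched just before the statement. Since the edge-graph of a polygon is a single cycle and that cycle is bipartite, it has even length; hence $P$ is a $2k$-gon, with $k\ge 2$ because a $2$-gon is not a polytope. Going around the cycle, consecutive vertices lie in opposite classes $V_1,V_2$, hence alternately on the circles $\Sph_{r_1}(0)$ and $\Sph_{r_2}(0)$.

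Next I would show that the interior angle at an $i$-vertex depends only on $i$ and on $r_1,r_2,\ell$. Let $v$ be an $i$-vertex with the two neighbours $w,w'$; both are $j$-vertices with $j\ne i$, so both lie on $\Sph_{r_j}(0)\cap\Sph_\ell(v)$. That intersection is symmetric under reflection in the line $L=\aff\{0,v\}$, and since $w\ne w'$ it consists of exactly $w$ and $w'$, which are therefore mirror images across $L$ (and neither lies on $L$, so $\conv\{0,v,w\}$ is a genuine triangle). Thus $w-v$ and $w'-v$ make the same angle $\phi_i$ with $L$, and the law of cosines applied to $\conv\{0,v,w\}$, whose sides have the fixed lengths $r_i,\ell,r_j$, gives $\cos\phi_i=(r_i^2+\ell^2-r_j^2)/(2r_i\ell)$ --- a function of $i$ and the parameters only (\cf\ \cref{res:angles_between_vertices}). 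The interior angle of the convex polygon $P$ at $v$ is then $\alpha_i=2\phi_i$ or $2\pi-2\phi_i$, whichever lies in $(0,\pi)$; in either case it is determined by $r_1,r_2,\ell$, so the interior angles indeed alternate between fixed values $\alpha_1,\alpha_2$, which also reproves the remark preceding the statement.

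For uniqueness of the shape, note that $P$ is now a convex $2k$-gon with all edge lengths $\ell$ and interior angles cyclically $\alpha_1,\alpha_2,\alpha_1,\alpha_2,\dots$. The exterior angles of a convex polygon sum to $2\pi$, giving $k\big((\pi-\alpha_1)+(\pi-\alpha_2)\big)=2\pi$, so even $k=2\pi/(2\pi-\alpha_1-\alpha_2)$ is determined by the parameters. Since a convex polygon is fixed up to congruence by its cyclic sequence of edge lengths and interior angles --- choose a starting vertex and an outgoing edge direction, which exhausts the isometry freedom up to a reflection, then lay out the edges one by one, turning by the relevant exterior angle at each step --- any two bipartite polygons with parameters $r_1,r_2,\ell$ are congruent.

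The only step that calls for a little care is the angle computation: checking that the two neighbours of $v$ are genuinely the two reflection-related points of $\Sph_{r_j}(0)\cap\Sph_\ell(v)$ (in particular that these circles are not tangent at a neighbour, which would leave $v$ with a single neighbour), and tracking whether $\alpha_i$ equals $2\phi_i$ or $2\pi-2\phi_i$. The latter can be sidestepped, since we only need $\alpha_i$ to be \emph{some} fixed function of the parameters; everything else is elementary planar geometry.
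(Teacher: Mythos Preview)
Your argument is correct and follows essentially the same route as the paper: the paper's justification is the informal ``explicit construction'' paragraph immediately preceding the corollary, where the neighbours of a vertex are obtained as the intersection $\Sph_{r_j}(0)\cap\Sph_\ell(v)$, and you have simply made that construction precise via the reflection symmetry across $\aff\{0,v\}$ and the law of cosines. Your additional observation that $k$ itself is recovered from the angle sum is a nice touch that the paper leaves implicit in the phrase ``until the edge cycle closes.''
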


The exact values for the interior angles are not of relevance. 
Instead, we only need the following properties:

\begin{proposition}\label{res:angles}
The interior angles $\alpha_1,\alpha_2\in(0,\pi)$ satisfy
%
\begin{equation}\label{eq:angles}
\alpha_1+\alpha_2=2\alpha_{\mathrm{reg}}^k
\quad\text{and}\quad
\alpha_2\le \alpha_{\mathrm{reg}}^k\le\alpha_1,
\end{equation}
where $\alpha_{\mathrm{reg}}^k:=(1-1/k)\pi$ is the interior angle of a regular $2k$-gon, and the inequalities are satisfied with equality if and only  $r_1=r_2$.
\begin{proof}
The sum of interior angles of a $2k$-gon is $2(k-1)\pi$, and thus
$k \alpha_1+k\alpha_2=2(k-1)\pi$, which, after division by $k$, yields the first part of \eqref{eq:angles}.

For two adjacent vertices $v_1,v_2\in \F_0(P)$ (where $v_i\in V_i$), consider the triangle $\Delta:=\conv\{0,v_1,v_2\}$ whose edge lengths are $r_1, r_2$ and $\ell$, and whose interior angles at $v_1$ \resp\ $v_2$ are $\alpha_1/2$ \resp\ $\alpha_2/2$.
From $r_1\le r_2$ (\resp\ $r_1<r_2$) and the law of sine follows $\alpha_1 \ge \alpha_2$ (\resp\ $\alpha_1 > \alpha_2$).
With $\alpha_1+\alpha_2=2\alpha_{\mathrm{reg}}^k$ this yields the second part of \eqref{eq:angles}.
\end{proof}
\end{proposition}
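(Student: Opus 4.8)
The plan is to prove the two parts of \eqref{eq:angles} independently: the identity by a direct angle-sum count, and the inequality by examining a single triangle having one vertex at the origin.

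For the identity $\alpha_1+\alpha_2=2\alpha_{\mathrm{reg}}^k$: since $G_P$ is bipartite and $P$ is a $2k$-gon, consecutive vertices around the boundary cycle lie in different classes $V_1,V_2$, so each class contains exactly $k$ vertices and the interior angles of $P$ read $\alpha_1,\alpha_2,\alpha_1,\alpha_2,\dots$ around the boundary. The interior angles of a convex $2k$-gon sum to $(2k-2)\pi$, hence $k\alpha_1+k\alpha_2=(2k-2)\pi$, and dividing by $k$ gives the claim, as $\alpha_{\mathrm{reg}}^k=(1-1/k)\pi$.

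For the inequality it is enough to show $\alpha_1\ge\alpha_2$, with strictness exactly when $r_1<r_2$; together with the identity this yields $\alpha_2\le\alpha_{\mathrm{reg}}^k\le\alpha_1$ and the stated equality case. Fix adjacent vertices $v_1\in V_1$ and $v_2\in V_2$ and consider the triangle $\Delta:=\conv\{0,v_1,v_2\}$, whose side lengths are $\|v_1\|=r_1$, $\|v_2\|=r_2$ and $\|v_1-v_2\|=\ell$ by definition of the parameters. The key observation, already visible in the explicit construction of bipartite polygons preceding \cref{res:2k_gons}, is that $[0,v_i]$ bisects the interior angle of $P$ at $v_i$: the two neighbours of $v_i$ are exactly the (at most two, hence precisely two) points of $\Sph_{r_j}(0)\cap\Sph_\ell(v_i)$ with $\{i,j\}=\{1,2\}$, and these are interchanged by the reflection $R$ across the line $g$ through $0$ and $v_i$; since $R$ fixes $0$ and swaps the two edges of $P$ at $v_i$, and since $0$ lies in the interior of $P$, the segment $[0,v_i]\subset g$ splits $\alpha_i$ into two equal halves. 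Thus the angle of $\Delta$ at $v_i$ equals $\alpha_i/2\in(0,\pi/2)$. Applying the law of sines to $\Delta$ (the side of length $r_i$ is opposite the angle at $v_j$) gives $r_1\sin(\alpha_1/2)=r_2\sin(\alpha_2/2)$, so $r_1\le r_2$ forces $\sin(\alpha_1/2)\ge\sin(\alpha_2/2)$, and since $\sin$ is strictly increasing on $(0,\pi/2)$ we conclude $\alpha_1\ge\alpha_2$, strictly when $r_1<r_2$.

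The only delicate point is the bisection claim, i.e. that the angle of $\Delta$ at $v_i$ is genuinely $\alpha_i/2$ and not half of the reflex angle $2\pi-\alpha_i$ — equivalently, that $\Delta$ is non-obtuse at $v_1$ and $v_2$ (compare the remark on edge in-spheres in \cref{rem:alternative_definiton}). For polygons this reduces to knowing $0\in\mathrm{int}(P)$, which is clear from the construction: then $[0,v_i]$ enters the convex interior angular sector at $v_i$, which is symmetric under $R$, forcing $[0,v_i]$ to be its axis of symmetry. Everything after that reduction is elementary planar trigonometry.
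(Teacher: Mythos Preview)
Your proof is correct and follows essentially the same approach as the paper: the identity via the interior-angle sum of a $2k$-gon, and the inequality via the law of sines in the triangle $\conv\{0,v_1,v_2\}$ with angles $\alpha_i/2$ at $v_i$. You add welcome detail justifying the bisection claim (that the angle of $\Delta$ at $v_i$ is $\alpha_i/2$ rather than $\pi-\alpha_i/2$), which the paper simply asserts; your appeal to $0\in\mathrm{int}(P)$ is the right idea, and for polygons this can be made precise via the equal angular spacing of consecutive vertices noted in \cref{res:angles_between_vertices}.
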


\begin{observation}\label{res:2k_4_case}
For later use (in \cref{res:strictly_bipartite_444}), consider \cref{res:angles} with $2k=$ $4$.
In this case we find,
$$\alpha_2\le \frac\pi 2\le \alpha_1,$$
that is, $\alpha_1$ is never acute, and $\alpha_2$ is never obtuse.
\end{observation}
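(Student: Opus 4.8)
The plan is to invoke \cref{res:angles} with the specialization $2k=4$, that is, $k=2$. For a regular $4$-gon (a square) the interior angle is $\alpha_{\mathrm{reg}}^2=(1-1/2)\pi=\pi/2$. Substituting this value into the second part of \eqref{eq:angles} immediately gives $\alpha_2\le\pi/2\le\alpha_1$, which is exactly the claimed inequality. No further work is required beyond this substitution.

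The only point worth spelling out is the interpretation of the resulting chain of inequalities: since $\alpha_1,\alpha_2\in(0,\pi)$ by \cref{res:2k_gons}, the bound $\alpha_1\ge\pi/2$ says that the interior angle at a $1$-vertex is never strictly less than a right angle, i.e.\ it is never acute, while $\alpha_2\le\pi/2$ says the interior angle at a $2$-vertex is never strictly greater than a right angle, i.e.\ it is never obtuse. I would state this reformulation explicitly since that is the form in which the observation will be used later in \cref{res:strictly_bipartite_444}.

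There is essentially no obstacle here: the observation is a direct corollary of \cref{res:angles}, obtained purely by plugging in $k=2$. The only thing to be careful about is getting the correct value $\alpha_{\mathrm{reg}}^2=\pi/2$ and correctly matching which of $\alpha_1,\alpha_2$ is bounded above versus below — the convention $r_1\le r_2$ forces $\alpha_1\ge\alpha_2$ (via the law of sines argument already carried out in the proof of \cref{res:angles}), so $\alpha_1$ is the larger angle and sits on the right of $\pi/2$.

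\begin{proof}
Apply \cref{res:angles} in the case $k=2$. Here $\alpha_{\mathrm{reg}}^2=(1-1/2)\pi=\pi/2$, and the second part of \eqref{eq:angles} reads
$$\alpha_2\le\frac\pi 2\le\alpha_1.$$
Since $\alpha_1,\alpha_2\in(0,\pi)$, this states precisely that $\alpha_1$ is never acute and $\alpha_2$ is never obtuse.
\end{proof}
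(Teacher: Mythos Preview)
Your proposal is correct and matches the paper's approach exactly: the observation is stated without separate proof in the paper because it is an immediate specialization of \cref{res:angles} at $k=2$, which is precisely what you do.
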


\subsection{The case $r_1=r_2$}

We classify the inscribed bipartite polytopes, that is, those with coinciding radii $r_1=r_2$.
This case is made especially easy by a classification result from \cite{winter2019classification}.
We need the following definition:

\begin{definition}\label{def:permutahedron}
Let $\Gamma\subset\Ortho(\RR^d)$ be a finite reflection group and $v\in \RR^d$ a \emph{generic} point \wrt\ $\Gamma$ (\ie\ $v$ is not fixed by a non-identity element of $\Gamma$).
The orbit~polytope
$$\Orb(\Gamma,v):=\conv\{Tv\mid T\in\Gamma\}\subset\RR^d$$
is called a \emph{$\Gamma$-permutahedron}.
\end{definition}

The relevant result then reads

\begin{theorem}[Corollary 4.6.\ in \cite{winter2019classification}]
\label{res:classification_inscribed_zonotopes}
If $P$ has only centrally symmetric 2-dimen\-sional faces (that is, it is a zonotope), has all vertices on a common sphere and all edges of the same length, then $P$ is a $\Gamma$-permutahedron.
\end{theorem}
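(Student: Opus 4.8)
The plan is to recover a finite reflection group $\Gamma$ from the three hypotheses and then to identify $P$ with one of its orbit polytopes $\Orb(\Gamma,v)$. First I would move the origin to the center of the circumscribed sphere, of radius $r$, say. Since $P$ is a zonotope it has a center of symmetry, and as the point reflection in this center is an isometry preserving $\F_0(P)$, it maps the circumsphere to itself and hence fixes the origin; so the center of symmetry is the origin as well. Writing $P$ as a Minkowski sum of segments and merging parallel generators yields $P=\sum_{j=1}^{n}[-w_j,w_j]$ with pairwise non-parallel $w_j$, and by full-dimensionality $W:=\{w_1,\dots,w_n\}$ spans $\RR^d$. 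Every edge of $P$ is a translate of some $[-w_j,w_j]$, hence of length $2\|w_j\|$, so the equilaterality hypothesis forces all $\|w_j\|$ to be equal; rescaling, I may assume each $w_j$ is a unit vector.

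The crucial step is to analyze the $2$-faces. Let $F$ be a $2$-face of $P$. It is a zonogon (faces of zonotopes are zonotopes), hence centrally symmetric; its vertices lie on the circle $\aff(F)\cap\Sph_r(0)$; and all its edges have the common length $\ell$. Since the point reflection in the center of $F$ is an isometry fixing the finite non-collinear set $\F_0(F)$, it fixes the circumscribed circle and therefore its center, so the center of $F$ coincides with the center of that circle. Thus $F$ is an equilateral, centrally symmetric polygon inscribed in a circle about its own center, that is, a regular $2m$-gon. Translating $F$ to the origin writes it as a zonogon $\sum_{w\in W_Q}[-w,w]$ with $W_Q=W\cap Q$, where $Q$ is the $2$-plane through the origin parallel to $\aff(F)$; and the $m$ generator directions of a regular $2m$-gon written in this way are equally spaced by $\pi/m$. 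Since for any two non-parallel generators $w_i,w_j$ the plane $Q=\Span\{w_i,w_j\}$ really does support such a $2$-face (the generators contained in $Q$ form a flat of the matroid underlying the zonotope), it follows that the generators lying in any common $2$-plane are equally spaced. Consequently, for each $w\in W$, the reflection $s_w$ in $w^\perp$ preserves every $2$-plane containing $w$ and acts there as the reflection in one line of an equally spaced family of lines, which permutes that family; hence $s_w$ maps $R:=W\cup(-W)$ onto itself. So $R$ is closed under the reflections in its own members, that is, $R$ is a (possibly non-crystallographic) root system.

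It then remains to read off the conclusion from the standard theory of finite reflection groups and zonotopes. Set $\Gamma:=\langle s_w\mid w\in W\rangle\subset\Ortho(\RR^d)$. As $\Gamma$ permutes the finite spanning set $R$ and acts faithfully on $\RR^d$, it is finite, and being generated by reflections it is a finite reflection group, whose reflecting hyperplanes are exactly $\{w^\perp\mid w\in W\}$. This arrangement is precisely the one whose chambers are in canonical, $\Gamma$-equivariant bijection with the vertices of the zonotope $P$; and $\Gamma$ acts simply transitively on the chambers of its reflection arrangement. Hence $\Gamma$ acts simply transitively on $\F_0(P)$. Fixing any vertex $v$, this gives $\F_0(P)=\Gamma v$, whence $P=\conv(\Gamma v)=\Orb(\Gamma,v)$, with $v$ generic (in the sense of \cref{def:permutahedron}) because its $\Gamma$-stabilizer is trivial. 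Therefore $P$ is a $\Gamma$-permutahedron.

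I expect the main obstacle to be the $2$-face analysis and, within it, the assertion that the generators lying in a common plane are equally spaced: this is the single place where all three hypotheses are used at once, and it depends on knowing that every plane spanned by two generators actually carries a $2$-face, which rests on the combinatorial description of the faces of a zonotope (faces $\leftrightarrow$ flats of the associated matroid, equivalently, the codimension-$2$ faces of the arrangement $\{w^\perp\mid w\in W\}$). Once the root system $R$ is available, the remaining identification of $P$ with $\Orb(\Gamma,v)$ is a routine appeal to well-known facts about finite reflection groups.
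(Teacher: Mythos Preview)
The paper does not prove this theorem; it is quoted verbatim as Corollary~4.6 of \cite{winter2019classification} and used as a black box. So there is no ``paper's own proof'' to compare against here.

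That said, your argument is correct and is essentially the standard route to this result. A couple of small remarks. First, once you know the $2$-face $F$ is inscribed in a circle and equilateral, it is already regular (equal chords on a fixed circle subtend equal arcs); central symmetry is not needed for that step, though it does no harm. Second, your phrasing ``acts there as the reflection in one line of an equally spaced family of lines'' is slightly off: restricted to the plane $Q$, the reflection $s_w$ is the reflection in $w^\perp\cap Q$, which is the line \emph{perpendicular} to $\Span\{w\}$ in $Q$, not $\Span\{w\}$ itself. Fortunately this reflection still permutes an equally spaced family of $m$ lines containing $\Span\{w\}$ (the line at angle $\theta$ goes to the line at angle $\pi-\theta$), so your conclusion that $s_w$ preserves $R$ stands. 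Finally, the step ``the reflecting hyperplanes of $\Gamma$ are exactly $\{w^\perp\mid w\in W\}$'' and the simply transitive action on chambers are genuine (standard) facts about finite root systems that you are right to invoke rather than reprove.

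Your own assessment of where the weight lies is accurate: the substance is entirely in showing that the unit generators form a root system, and this is exactly where inscribedness, equilaterality, and the zonotope structure (every plane spanned by two generators carries a $2$-face whose edge directions are the generators in that plane) are used together.
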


This provides a classification of bipartite polytopes with $r_1=r_2$.

\begin{theorem}
If $P\subset\RR^d$ is bipartite with $r_1=r_2$, then it is a $\Gamma$-permutahedron.
\begin{proof}
If $r_1=r_2$, then all vertices are on a common sphere (that is, $P$ is inscribed).
By definition, all edges are of the same length.
Both statements then also hold for the faces of $P$, in particular, the 2-dimensional faces. 
An inscribed polygon with a unique edge length is necessarily regular.
With \cref{res:2k_gons} the 2-faces are then regular $2k$-gons, therefore centrally symmetric.

Summarizing, $P$ is inscribed, has all edges of the same length, and all 2-dimen\-sio\-nal faces of $P$ are centrally symmetric.
By \cref{res:classification_inscribed_zonotopes}, $P$ is a $\Gamma$-permutahedron. 
\end{proof}
\end{theorem}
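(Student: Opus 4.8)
The plan is to reduce the statement entirely to the zonotope classification \cref{res:classification_inscribed_zonotopes}. First I would note that $r_1=r_2$ forces every vertex of $P$ to lie on the single sphere $\Sph_{r_1}(0)$, so $P$ is inscribed; together with the defining property that all edges share a common length $\ell$, two of the three hypotheses of \cref{res:classification_inscribed_zonotopes} are already satisfied. The only thing left to establish is that $P$ is a zonotope, i.e.\ that every $2$-dimensional face of $P$ is centrally symmetric.

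To get this, I would pass to the $2$-faces via \cref{res:faces_of_bipartite}: each $\sigma\in\F_2(P)$ is again bipartite, with radii $\rho_1\le\rho_2$ satisfying $h^2+\rho_i^2=r_i^2$, where $h$ is the height of $\sigma$. Since $r_1=r_2$, this identity collapses to $\rho_1=\rho_2$, so $\sigma$ is itself an inscribed bipartite polygon all of whose edges have length $\ell$. By \cref{res:2k_gons} such a $\sigma$ is a $2k$-gon, and an inscribed polygon with a single edge length is regular (equal chords of a circle subtend equal central angles), hence a regular $2k$-gon, hence centrally symmetric. Therefore every $2$-face of $P$ is centrally symmetric and $P$ is a zonotope.

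Finally, applying \cref{res:classification_inscribed_zonotopes} to $P$ — which is inscribed, has all edges of the same length, and is a zonotope — yields that $P$ is a $\Gamma$-permutahedron for some finite reflection group $\Gamma\subset\Ortho(\RR^d)$, which is what we wanted.

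I do not anticipate a genuine obstacle here: the substantive content is outsourced to \cref{res:classification_inscribed_zonotopes}, and the only mildly delicate points — that the face-radius relation of \cref{res:faces_of_bipartite} really degenerates to $\rho_1=\rho_2$ when $r_1=r_2$, and that an inscribed equilateral polygon is regular — are both immediate.
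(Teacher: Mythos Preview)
Your proposal is correct and follows essentially the same route as the paper's proof: verify that $P$ is inscribed with equal edge lengths, argue that every $2$-face is a regular $2k$-gon (hence centrally symmetric), and then invoke \cref{res:classification_inscribed_zonotopes}. The only difference is cosmetic --- you justify the inscribed-face step explicitly via the height relation in \cref{res:faces_of_bipartite}, whereas the paper simply asserts that inscribedness and equal edge length pass to faces.
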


%
$\Gamma$-permutahedra are vertex-transitive by definition, hence do not provide examples of edge- but not vertex-transitive polytopes.

\subsection{Strictly bipartite polytopes}

It remains to classify the \emph{strictly} bipartite~poly\-topes.
This problem is divided into two independent cases: dimension $d=3$, and dimension $d\ge 4$.
The detailed study of the case $d=3$ (which turns out to be the actual hard work) is postponed until \cref{sec:bipartite_polyhedra}, the result of which is the following theorem:


\begin{theorem}\label{res:strictly_bipartite_polyhedra}
If $P\subset\RR^3$ is strictly bipartite, then $P$ is the rhombic dodecahedron or the rhombic triacontahedron.
\end{theorem}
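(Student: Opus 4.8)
The plan is to classify strictly bipartite polyhedra $P\subset\RR^3$ by pushing hard on the local structure around vertices and faces, using that every face is again strictly bipartite (\cref{res:faces_of_bipartite}) and that the parameters force a rigid geometry. First I would record the basic constraints: the edge-graph $G_P$ is bipartite with classes $V_1\cupdot V_2$ and radii $r_1<r_2$; every $2$-face is a strictly bipartite polygon, hence by \cref{res:2k_gons} a $2m$-gon with alternating angles $\alpha_1>\alpha_{\mathrm{reg}}^m>\alpha_2$ (\cref{res:angles}); and around each vertex $v$ the incident faces tile the full angle $2\pi$ minus a positive defect (since $P$ is a convex polyhedron, every vertex is pointed). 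The key numerical levers are then: at a $1$-vertex the incident face-angles are all of the form $\alpha_1$ (the \emph{larger} of the two alternating angles) summing to less than $2\pi$, and at a $2$-vertex they are all $\alpha_2 < \alpha_{\mathrm{reg}}^m$. Because $\alpha_1 > \alpha_{\mathrm{reg}}^m \ge \pi/2$, a $1$-vertex can have degree at most $3$; combined with the angular-defect/convexity bound this severely restricts the possible face-types and vertex-degrees. I expect this to cut the possibilities down to a short list of "abstract" face/vertex combinatorial types.

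Next I would use the fact that every edge joins a $1$-vertex to a $2$-vertex, so each face alternates $1$- and $2$-vertices; together with the degree-$\le 3$ restriction on $1$-vertices and an analogous (weaker) bound or counting argument on $2$-vertices, one gets an Euler-characteristic / face-vector analysis: letting $f_j^{(i)}$ count $i$-vertices of degree $j$ and $p_m$ count $2m$-gonal faces, the relations $\sum jf_j^{(1)} = \sum jf_j^{(2)} = \sum 2m\,p_m = |E|$ together with $|V|-|E|+|F|=2$ and the angle-sum inequalities pin down $(m,\text{degrees})$ to a handful of cases. The surviving abstract polyhedra should be exactly the rhombic dodecahedron ($m=2$, all faces rhombi, $1$-vertices of degree $3$, $2$-vertices of degree $4$) and the rhombic triacontahedron ($m=2$ again, $2$-vertices of degrees $3$ and $5$) — the two rhombic isohedra — plus possibly a few near-miss candidates that must be excluded. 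For the rhombic candidates, the classification of rhombic isohedra (face-transitive rhombic polyhedra) identifies the combinatorial type, and then one checks that the bipartite parameters $(r_1,r_2,\ell)$ are actually realizable, i.e.\ that the rhombus shape forced by $r_1,r_2,\ell$ closes up into a genuine convex polyhedron; for these two it does (they are the duals of the cuboctahedron and icosidodecahedron).

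The main obstacle — and what the excerpt itself flags as "the actual hard work" — is ruling out the many a priori possible combinatorial types that survive the crude angle and Euler bounds but fail to be realizable as a convex polyhedron with the prescribed metric. A purely combinatorial Euler-formula argument will not be enough: one needs the \emph{metric} rigidity, namely that the parameters $r_1,r_2,\ell$ determine every face up to congruence (\cref{res:2k_gons}) and determine the dihedral/vertex angles (\cref{res:angles_between_vertices}), so that the "gluing" of faces around each vertex is over-determined. I would handle this by Steinitz-type realization arguments: take the forced abstract polyhedron, attempt to realize it with all faces the forced shape, and derive a contradiction either from a vertex whose forced face-angles do not sum to something $<2\pi$, or from a global consistency failure (e.g.\ a cycle of faces whose induced rotation is not the identity). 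The delicate cases — I'd expect these to involve small-degree $2$-vertices or a mix of triangular and quadrilateral faces — are exactly where a careful spherical-geometry computation (angles of spherical polygons linking each vertex) is needed, and where the promised "near-miss" polyhedron appears as the candidate that fails only by a small quantifiable amount.
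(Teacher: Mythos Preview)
Your overall architecture matches the paper's --- degree-$3$ constraint on $1$-vertices, a short list of admissible $1$-vertex types, the rhombic-isohedra classification for the all-quadrilateral case, and a final realization check --- but the machinery you propose for the middle elimination step is too weak. The flat interior angles at a vertex only satisfy the \emph{inequality} $\sum\alpha<2\pi$, and combining this with Euler's formula gives global averages, not enough local rigidity to exclude, say, a polyhedron whose $1$-vertices come in two different types or one mixing $4$-, $6$- and $8$-gons. The paper's key device, which you relegate to ``delicate cases'', is in fact central: pass to the \emph{spherical} polyhedron (central projection to the unit sphere, after proving $0\in\mathrm{int}(P)$), where the spherical interior angles at each vertex sum to \emph{exactly} $2\pi$. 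This equality, together with the bijection between the spherical angle $\beta_i^k$ and the flat angle $\alpha_i^k$, forces that at most two $1$-vertex types can coexist, that a $1$-vertex and a $2$-vertex cannot share a type, and that the dihedral angle along an edge is determined by its type. These facts power an ``adjacent pairs'' bookkeeping (comparing the type of a $1$-vertex with the possible types of its $2$-neighbours via inequalities on $K(\tau)=\sum 1/k_j$ and $E(\tau)=\sum\eps_{k_j}$) that eliminates $(4,6,6)$, $(4,6,8)$, $(4,6,10)$ and $(4,4,2k)$ for $2k\ge 8$ one by one; Euler counting does not enter.

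After this, only $(4,4,4)$ and $(4,4,6)$ survive. The first is handled by Bilinski's classification of rhombic isohedra, as you say. The second does \emph{not} die from any angle or Euler argument: it forces a unique edge-graph, and the paper kills it by an explicit coordinate computation --- building the candidate from a modified regular icosahedron with an edge in-sphere, then checking that the two kinds of $2$-vertices sit at radii differing by roughly $0.4\%$ (this is the near-miss). Your ``cycle-of-rotations'' suggestion is in the right spirit but would need to be made equally concrete; the paper's direct coordinate check is the shortest route here.
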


Presupposing \cref{res:strictly_bipartite_polyhedra}, the case $d\ge 4$ is done quickly.

\begin{theorem}\label{res:3_dim_suffices}
There are no strictly bipartite polytopes in dimension $d\ge 4$.
\begin{proof}
It suffices to show that there are no strictly bipartite polytopes in dimension $d=4$, as any higher-dimensional example has a strictly bipartite 4-face (by \cref{res:faces_of_bipartite}).

Let $P\subset\RR^4$ be a strictly bipartite 4-polytope.
Let $e\in\F_1(P)$ be an edge of $P$.
Then there are $s\ge 3$ cells (aka.\ 3-faces) $\sigma_1,...,\sigma_s\in\F_3(P)$ incident to $e$, each of which is again strictly bipartite (by \cref{res:faces_of_bipartite}).
By \cref{res:strictly_bipartite_polyhedra} each $\sigma_i$ is a rhombic dodecahedron~or~rhom\-bic triacontahedron.

The dihedral angle of the rhombic dodecahedron \resp\ triacontahedron is $120^\circ$ \resp\ $144^\circ$ at every edge \cite{coxeter1973regular}.
However, the dihedral angles meeting at $e$ must sum up to less than $2\pi$.
With the given dihedral angles this is impossible.
\end{proof}
\end{theorem}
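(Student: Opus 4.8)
The plan is to reduce the claim to dimension $4$ and there derive a contradiction from the dihedral angles of the only two admissible cells.

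First I would dispose of all dimensions $d>4$: such a strictly bipartite polytope has a face of dimension $4$ (take a facet of a facet of it, and so on), and by \cref{res:faces_of_bipartite} that face is again strictly bipartite. So it suffices to rule out strictly bipartite $4$-polytopes.

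Now let $P\subset\RR^4$ be strictly bipartite and fix an edge $e\in\F_1(P)$. Then $e$ lies in $s\ge 3$ cells $\sigma_1,\dots,\sigma_s\in\F_3(P)$, cyclically arranged around $e$ (the faces of $P$ containing $e$ form the face lattice of a polygon). Each $\sigma_i$ is a $3$-dimensional face of $P$, hence strictly bipartite by \cref{res:faces_of_bipartite}, hence---by \cref{res:strictly_bipartite_polyhedra}---a rhombic dodecahedron or a rhombic triacontahedron. Next I would inspect $P$ locally along $e$: near a relative interior point of $e$ the polytope looks like $e\times C$ for a pointed convex three-dimensional cone $C$ whose two-dimensional faces correspond to the $\sigma_i$ and whose face angles at the apex are exactly the dihedral angles of the $\sigma_i$ at $e$. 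The face angles at a vertex of a convex polyhedral cone sum to strictly less than $2\pi$; but the dihedral angle is $120^\circ$ at every edge of the rhombic dodecahedron and $144^\circ$ at every edge of the rhombic triacontahedron \cite{coxeter1973regular}, so their sum is at least $3\cdot 120^\circ=2\pi$, a contradiction.

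The one step that needs care is this last geometric claim, that the dihedral angles of the cells meeting at $e$ sum to less than $2\pi$. I would establish it by identifying those angles with the side lengths of the link of $e$ in $P$, a convex spherical polygon lying in an open hemisphere, whose perimeter is consequently below $2\pi$. Everything else---the reduction to $d=4$, the bound $s\ge 3$, and the classification of strictly bipartite polyhedra---follows directly from the results already available.
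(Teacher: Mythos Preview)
Your proposal is correct and follows essentially the same route as the paper's proof: reduce to $d=4$ via \cref{res:faces_of_bipartite}, invoke \cref{res:strictly_bipartite_polyhedra} to identify the cells around an edge as rhombic dodecahedra or triacontahedra, and obtain a contradiction from their dihedral angles ($120^\circ$ and $144^\circ$) summing to at least $2\pi$. Your additional justification of the inequality $\sum_i \beta_i < 2\pi$ via the spherical link of $e$ is a welcome elaboration of a step the paper simply asserts, but it does not constitute a different approach.
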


\section{Strictly bipartite polyhedra}
\label{sec:bipartite_polyhedra}

In this section we derive the classification of strictly bipartite polyhedra.
The main goal is to show that there are only two: the rhombic dodecahedron and the rhombic triacontahedron.

From this section on, let $P\subset\RR^3$ denote a fixed \emph{strictly bipartite polyhedron} with radii $r_1< r_2$ and edge length $\ell$.
The 2-faces of $P$ will be shortly referred to as just \emph{faces} of $P$.
Since they are bipartite, they are necessarily $2k$-gons.


\begin{definition}
We use the following terminology:
\begin{myenumerate}
	\item a face of $P$ is of \emph{type} $2k$ (or called a \emph{$2k$-face}) if it is a $2k$-gonal polygon.
	\item an edge of $P$ is of \emph{type} $(2k_1,2k_2)$ (or called a \emph{$(2k_1,2k_2)$-edge}) if the two incident faces are of type $2k_1$ and $2k_2$ respectively.
	\item a vertex of $P$ is of \emph{type} $(2k_1,...,2k_s)$ (or called a \emph{$(2k_1,...,2k_s)$-vertex}) if its incident faces can be enumerated as $\sigma_1,...,\sigma_s$ so that $\sigma_i$ is a $2k_i$-face (note, the order of the numbers does not matter).
\end{myenumerate}
We write $\tau(v)$ for the type of a vertex $v\in\F_0(P)$.
\end{definition}

\subsection{General observations}

In a given bipartite polyhedron, the type of a vertex, edge or face already determines much of its metric properties.
We prove this for faces:

\begin{proposition}\label{res:unique_shape}
For some face $\sigma\in\F_2(P)$, any of the following properties of $\sigma$ determines the other two:
\begin{myenumerate}
	\item its type $2k$,
	\item its interior angles $\alpha_1>\alpha_2$.
	\item its height $h$ (that is, the distance of $\aff(\sigma)$ from the origin).
\end{myenumerate}
\end{proposition}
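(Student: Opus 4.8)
We work with the fixed strictly bipartite polyhedron $P\subset\RR^3$ with radii $r_1<r_2$ and edge length $\ell$, and a face $\sigma\in\F_2(P)$. By \cref{res:faces_of_bipartite}, $\sigma$ is itself strictly bipartite (as a subset of $\aff(\sigma)$ with origin $c$ the foot of the perpendicular from $0$), and has radii $\rho_1<\rho_2$ satisfying $h^2+\rho_i^2=r_i^2$, where $h=\|c\|$ is its height.

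The plan is to show that the two equations $h^2+\rho_1^2=r_1^2$ and $h^2+\rho_2^2=r_2^2$, together with the fact that $\sigma$ is a bipartite $2k$-gon with edge length $\ell$, tie all four quantities $k$, $\alpha_1$, $\alpha_2$, $h$ to each other rigidly, so that any one of $\{2k\}$, $\{\alpha_1>\alpha_2\}$, $\{h\}$ determines the rest. I would proceed in three short implications forming a cycle.

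\begin{proof}
By \cref{res:faces_of_bipartite}, $\sigma$ is a strictly bipartite polygon with some radii $\rho_1<\rho_2$, the same edge length $\ell$ as $P$, and height $h$ satisfying
\begin{equation}
h^2+\rho_i^2 = r_i^2, \qquad i\in\{1,2\}.
\tag{$\dagger$}
\end{equation}
Note that $r_1,r_2,\ell$ are fixed throughout, so the only free parameter among $h,\rho_1,\rho_2$ is $h$: by $(\dagger)$, $h$ determines $\rho_1$ and $\rho_2$, and conversely either of $\rho_1,\rho_2$ determines $h$.

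\textbf{(iii) $\Rightarrow$ (ii).} Given $h$, we obtain $\rho_1,\rho_2$ from $(\dagger)$. By \cref{res:2k_gons}, the bipartite polygon $\sigma$ with parameters $\rho_1,\rho_2,\ell$ is determined up to congruence, and in particular its interior angles $\alpha_1>\alpha_2$ are determined. (Concretely, in the triangle $\Delta=\conv\{c,v_1,v_2\}$ spanned by $c$ and two adjacent vertices of $\sigma$, the side lengths are $\rho_1,\rho_2,\ell$, and $\alpha_i/2$ is its angle at $v_i$; this fixes $\alpha_1,\alpha_2$.)

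\textbf{(ii) $\Rightarrow$ (i).} Suppose $\alpha_1>\alpha_2$ are given. Since $\sigma$ is a $2k$-gon, \cref{res:angles} gives $\alpha_1+\alpha_2 = 2\alpha_{\mathrm{reg}}^k = 2(1-1/k)\pi$, hence
$$
k = \frac{2\pi}{\,2\pi-\alpha_1-\alpha_2\,},
$$
which is determined by $\alpha_1,\alpha_2$. (Note $\alpha_1+\alpha_2<2\pi$ since each $\alpha_i\in(0,\pi)$, so the denominator is positive.) Thus the type $2k$ of $\sigma$ is determined.

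\textbf{(i) $\Rightarrow$ (iii).} Suppose the type $2k$ is given. Consider again the triangle $\Delta=\conv\{c,v_1,v_2\}$ for two adjacent vertices $v_1\in V_1$, $v_2\in V_2$ of $\sigma$. Its angle at $c$ equals $\pi/k$: the $2k$ vertices of $\sigma$ lie on two concentric circles about $c$ in $\aff(\sigma)$, and by \cref{res:angles_between_vertices} applied inside $\aff(\sigma)$ the angular gap $\angle(v_1-c,\,v_2-c)$ between consecutive vertices is the same for every edge of $\sigma$; since these gaps sum to $2\pi$ over the $2k$ edges, each equals $2\pi/(2k)=\pi/k$. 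Now in $\Delta$ the side opposite the angle $\pi/k$ has length $\ell$, and the other two sides have lengths $\rho_1,\rho_2$ with angles $\alpha_1/2,\alpha_2/2$ opposite to them; together with $\alpha_1+\alpha_2 = 2(1-1/k)\pi$ from \cref{res:angles} and the law of sines
$$
\frac{\rho_1}{\sin(\alpha_2/2)} = \frac{\rho_2}{\sin(\alpha_1/2)} = \frac{\ell}{\sin(\pi/k)},
$$
the triangle $\Delta$ is determined up to congruence. Hence $\rho_1,\rho_2$ are determined by $k$ and $\ell$, and then $h=\sqrt{r_1^2-\rho_1^2}$ is determined by $(\dagger)$.

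Chaining the three implications shows that any one of (i), (ii), (iii) determines the other two.
\end{proof}

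**Main obstacle.** The one place requiring genuine care is the step (i) $\Rightarrow$ (iii): one must justify that the angle of $\Delta$ at $c$ is exactly $\pi/k$, i.e. that the vertices of $\sigma$ are "equally spread out" as seen from $c$. This is where \cref{res:angles_between_vertices} is invoked — inside the plane $\aff(\sigma)$ with origin $c$ — to say that $\angle(v_1-c,v_2-c)$ depends only on $\rho_1,\rho_2,\ell$ and hence is constant along the edge cycle; summing to $2\pi$ then forces it to be $\pi/k$. Everything else is elementary triangle trigonometry plus the already-established $(\dagger)$ and \cref{res:angles}.
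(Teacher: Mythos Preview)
Your (iii) $\Rightarrow$ (ii) is essentially the paper's argument, and your (ii) $\Rightarrow$ (i) via $k = 2\pi/(2\pi - \alpha_1 - \alpha_2)$ is a clean shortcut (the paper instead proves (ii) $\Rightarrow$ (iii) by reconstructing the tetrahedron $\conv\{0,v,w_1,w_2\}$ from its edge lengths).

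The gap is in (i) $\Rightarrow$ (iii). You assert that knowing the angle $\pi/k$ at $c$ and the opposite side $\ell$, ``together with $\alpha_1+\alpha_2 = 2(1-1/k)\pi$'', determines the triangle $\Delta$ up to congruence. But the relation $\alpha_1/2 + \alpha_2/2 = \pi - \pi/k$ is nothing more than the angle sum of $\Delta$; it adds no information. One angle and its opposite side fix only the circumradius of a triangle, not its shape: there is a genuine one-parameter family of triangles with these data, obtained by sliding $c$ along the circumcircle. Consequently $\rho_1,\rho_2$ are \emph{not} determined by $k$ and $\ell$ alone, and the step ``hence $\rho_1,\rho_2$ are determined by $k$ and $\ell$'' fails.

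To repair this you must feed in the missing constraint $\rho_2^2-\rho_1^2 = r_2^2-r_1^2$ from $(\dagger)$ \emph{before} solving for the triangle, and then argue uniqueness (e.g.\ via the law of sines this becomes $\sin\!\big((\alpha_1-\alpha_2)/2\big) = (r_2^2-r_1^2)\sin(\pi/k)/\ell^2$, and convexity forces $(\alpha_1-\alpha_2)/2\in(0,\pi/2)$ where $\sin$ is injective). The paper takes a different route altogether for (i) $\Rightarrow$ (iii): it argues by contradiction, superimposing two hypothetical $2k$-faces of heights $h>h'$ in the plane and deriving $\ell<\ell$ from a geometric comparison. The paper even has a footnote warning that the direct algebraic approach yields a degree-four polynomial in $h$ with spurious roots coming from non-convex $2k$-gons---precisely the pitfall your argument walks into by treating the triangle as determined when it is not.
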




\begin{corollary}
Any two faces of $P$ of the same height, or the same type, or the same interior angles, are congruent.
\end{corollary}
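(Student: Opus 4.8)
The plan is to route all three equivalences through the single triangle $\Delta:=\conv\{c,v_1,v_2\}$, where $c$ is the orthogonal projection of the origin onto $\aff(\sigma)$ and $v_1,v_2$ are two adjacent vertices of $\sigma$ with $v_i$ an $i$-vertex. By \cref{res:faces_of_bipartite} the face $\sigma$ is a bipartite $2k$-gon whose radii $\rho_1<\rho_2$ satisfy $\rho_i^2=r_i^2-h^2$; hence $\rho_2^2-\rho_1^2=r_2^2-r_1^2=:D>0$ is a constant common to all faces, and $\rho_1,\rho_2$ are strictly decreasing in $h$ on the range $0\le h<r_1$ (the bound $h<r_1$, equivalently $\rho_1>0$, holds because $\sigma$ carries at least two distinct $1$-vertices, all at distance $\rho_1$ from $c$). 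Repeating the argument in the proof of \cref{res:angles} with the origin replaced by $c$, the triangle $\Delta$ has side lengths $\rho_1,\rho_2,\ell$ and interior angles $\alpha_1/2$ at $v_1$ and $\alpha_2/2$ at $v_2$, while \cref{res:angles} gives $\alpha_1+\alpha_2=2(1-1/k)\pi$, so that $k$ and the sum $\alpha_1+\alpha_2$ determine one another through $k=2\pi/(2\pi-\alpha_1-\alpha_2)$.

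I would first record the inequality $\ell^2>D$. By the law of cosines in $\Delta$ at the vertex $v_1$, $\cos(\alpha_1/2)=(\rho_1^2+\ell^2-\rho_2^2)/(2\rho_1\ell)=(\ell^2-D)/(2\rho_1\ell)$; since $\alpha_1\in(0,\pi)$ (\cref{res:2k_gons}) the left-hand side is positive and $\rho_1,\ell>0$, so $\ell^2>D$. Now the three implications. For $(iii)\Rightarrow(i),(ii)$: $h$ determines $\rho_1,\rho_2$, hence $\Delta$ up to congruence (three sides), hence $\alpha_1/2,\alpha_2/2$, hence $\alpha_1,\alpha_2$ and $k$. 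For $(ii)\Rightarrow(i),(iii)$: $\alpha_1,\alpha_2$ give $k$ as above, and the law of sines in $\Delta$ gives $\rho_2/\rho_1=\sin(\alpha_1/2)/\sin(\alpha_2/2)=:q$, which exceeds $1$ as $\alpha_1>\alpha_2$; combined with $\rho_2^2-\rho_1^2=D$ this yields $\rho_1^2=D/(q^2-1)$ and hence $h^2=r_1^2-\rho_1^2$. For $(i)\Rightarrow(ii),(iii)$, the crucial direction: the law of cosines gives $\cos(\alpha_1/2)=(\ell^2-D)/(2\rho_1\ell)$ and $\cos(\alpha_2/2)=(\ell^2+D)/(2\rho_2\ell)$, both with fixed positive numerators, so as $h$ increases (hence $\rho_1,\rho_2$ decrease) both $\cos(\alpha_i/2)$ strictly increase; as $\alpha_i/2\in(0,\pi/2)$, both $\alpha_i$ — and therefore $\alpha_1+\alpha_2=2(1-1/k)\pi$ — strictly decrease in $h$. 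Thus, over the faces of $P$, the type $2k$ is a strictly monotone function of the height $h$, so it determines $h$, after which $(iii)$ supplies the rest. The corollary is then immediate: two faces sharing one of the three data share all three, in particular the triple $(\rho_1,\rho_2,\ell)$, and so are congruent by \cref{res:2k_gons}.

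The step I expect to be the main obstacle is $(i)\Rightarrow(iii)$, that the type alone pins down the height. For abstract bipartite $2k$-gons this is \emph{false}: fixing $k$, the edge length $\ell$ and the difference $\rho_2^2-\rho_1^2$ leaves, in general, two non-congruent solutions (an SSA-type ambiguity visible in the equation $\cos(\pi/k)=(\rho_1^2+\rho_2^2-\ell^2)/(2\rho_1\rho_2)$ with $\rho_2^2-\rho_1^2=D$ fixed). What removes the ambiguity here is that a $2$-face of a convex polytope is a convex polygon, so $\alpha_1<\pi$; this delivers $\ell^2>D$ and hence the monotonicity above. The other two implications are routine once one notes that the shape of $\Delta$ is determined by its three sides, equivalently by any two of its angles, so the only real content is turning the correct combination of $r_1,r_2,\ell,h,k,\alpha_1,\alpha_2$ into the remaining ones.
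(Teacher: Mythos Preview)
Your argument is correct and, for the crucial direction $(i)\Rightarrow(iii)$, cleaner than the paper's. The paper proves \cref{res:unique_shape} as follows: for $(iii)\Rightarrow(i),(ii)$ it does exactly what you do; for $(ii)\Rightarrow(iii)$ it works with the \emph{3-dimensional} simplex $S=\conv\{0,v,w_1,w_2\}$ (where $w_1,w_2$ are the two neighbours of the $1$-vertex $v$ in $\sigma$), observing that $\alpha_1$ determines $\|w_1-w_2\|$ and hence all six edge lengths of $S$, so the height of the face $\conv\{v,w_1,w_2\}$ over $0$ is determined; for $(i)\Rightarrow(iii)$ it argues by contradiction, placing two hypothetical $2k$-faces of heights $h>h'$ on top of each other in $\RR^2$ and extracting the chain $\ell=\|v_1-v_2\|<\delta_1\|v_1-v_2\|<\|\delta_1v_1-\delta_2v_2\|=\ell$ via an auxiliary obtuse-triangle argument.

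What your approach buys is a uniform mechanism: everything runs through the single triangle $\Delta=\conv\{c,v_1,v_2\}$ and the law of cosines, and the monotonicity $h\mapsto \alpha_1+\alpha_2$ makes the role of convexity completely explicit (your observation $\alpha_1<\pi\Rightarrow\ell^2>D$ is precisely the hypothesis that rules out the spurious non-convex branch the paper alludes to in its footnote). The paper's $(ii)\Rightarrow(iii)$ via the 3-simplex is perhaps more robust in that it needs no case distinction on which $\alpha_i$ is known, but your law-of-sines computation is more elementary. For $(i)\Rightarrow(iii)$ your monotonicity proof is strictly preferable to the paper's ad hoc picture: it is shorter, it explains rather than merely verifies, and it avoids the somewhat delicate inequality $\|\delta_1v_1-\delta_1v_2\|<\|\delta_1v_1-\delta_2v_2\|$.
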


\begin{proof}[Proof of \cref{res:unique_shape}]
Fix a face $\sigma\in\F_2(P)$. 

Suppose that the height $h$ of $\sigma$ is known.
By \cref{res:faces_of_bipartite}, a face of $P$ of height $h$ is bipartite with radii $\rho_i^2:=r_i^2-h^2$ and edge length $\ell$.
By \cref{res:2k_gons}, these parameters then uniquely determine the shape of $\sigma$, which includes its type and its interior angles. This shows $(iii)\implies(i),(ii)$.

Suppose now that we know the interior angles $\alpha_1>\alpha_2$ of $\sigma$ (it actually suffices to know one of these, say $\alpha_1$).
Fix a 1-vertex $v\in V_1$ of $\sigma$ and let $w_1,w_2\in V_2$ be its two adjacent 2-vertices in $\sigma$.
Consider the simplex $S:=\conv\{0,v,w_1,w_2\}$.
The length of each edge of $S$ is already determined, either by the parameters alone, or by additionally using the known interior angles via 
\begin{align*}
&\|w_1-w_2\|^2= \|w_1-v\|^2+\|w_2-v\|^2 -2\<w_1-v,w_2-v\> \\ &\phantom{\|w_1-w_2\|^2}= 2\ell^2(1-\cos\underbrace{\angle(w_1-v,w_2-v)}_{\alpha_1}).
\end{align*}
Thus, the shape of $S$ is determined.
In particular, this determines the height of the face $\conv\{v,w_1,w_2\}\subset S$ over the vertex $0\in S$.
Since $\aff\{v,w_1,w_2\}=\aff(\sigma)$, this determines the height of $\sigma$ in $P$. This proves $(ii)\implies(iii)$.

Finally, suppose that the type $2k$ is known.
We then want to show that the~height $h$ is uniquely determined.%
\footnote{The reader motivated to prove this himself should know the following: it is indeed possible to write down a polynomial in $h$ of degree four whose coefficients involve only $r_1, r_2$, $\ell$ and $\cos(\pi/k)$, and whose zeroes include all possible heights of any $2k$-face of $P$.
However, it turns out to be quite tricky to work out which zeroes correspond to feasible solutions.
For certain values of the coefficients there are multiple positive solutions for $h$, some of which correspond to \emph{non-convex} $2k$-faces.
There seems to be no easy way to tell them apart.}
For the sake of contradiction, suppose that the type $2k$ does \emph{not} uniquely determine the height of the face.
Then there is another $2k$-face $\sigma'\in\F_2(P)$ of some height $h'\not=h$.
W.l.o.g.\ assume $h>h'$.

Visualize both faces embedded in $\RR^2$, on top of each other and centered at the origin as shown in the figure below:
\begin{center}
\includegraphics[width=0.33\textwidth]{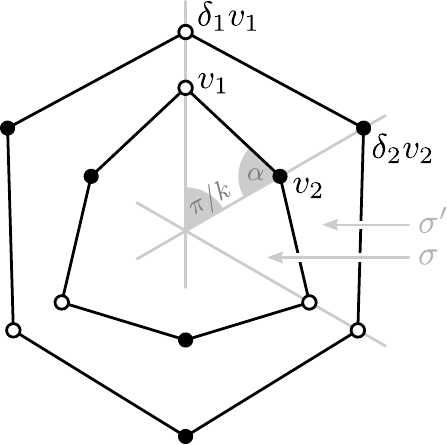}
\end{center}
The vertices in both polygons are equally spaced by an angle of $\pi/k$ (\cf\ \cref{res:angles_between_vertices}) and we can therefore assume that the vertex $v_i$ of $\sigma$ (\resp\ $v'_i$ of $\sigma'$) is a positive multiple of $(\sin(i\pi/k),\cos(i\pi/k))\in\RR^2$ for $i\in\{1,...,2k\}$.
There are then factors $\delta_i\in\RR_+$ with $v_i'=\delta v_i$.

The norms of vectors $v_1$, $v_2$, $\delta_1 v_1$ and $\delta_2 v_2$ are the radii of the bipartite polygons $\sigma$ and $\sigma'$.
With \cref{res:faces_of_bipartite} $(iii)$ from $h>h'$ follows $\|v_1\| < \|\delta_1 v_1\|$ and~$\|v_2\|<\|\delta_2 v_2\|$, and thus, $(*)\;\delta_1,\delta_2>1$. W.l.o.g. assume $\delta_1\le\delta_2$.

Since both faces have edge length $\ell$, we have $\|v_1-v_2\| = \|\delta_1 v_1-\delta_2 v_2\| = \ell$.
Our goal is to derive the following contradiction:
$$\ell=\|v_1-v_2\| \overset{\mathclap{\smash{(*)}}}< \delta_1 \|v_1-v_2\| = \|\delta_1 v_1-\delta_1 v_2\| \,\overset{\mathclap{\smash{(**)}}}<\, \|\delta_1 v_1-\delta_2 v_2\|=\ell,$$
To prove $(**)$, consider the triangle $\Delta$ with vertices $\delta_1 v_1$, $\delta_2 v_2$ and $\delta_1 v_2$:
\begin{center}
\includegraphics[width=0.29\textwidth]{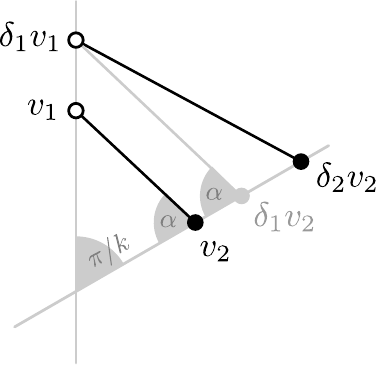}
\end{center}
Since $\sigma$ is convex, the angle $\alpha$ is smaller than $90^\circ$. It follows that the interior angle of $\Delta$ at $\delta_1 v_2$ is obtuse (here we are using $\delta_1\le \delta_2$). Hence, by the sine law, the edge of $\Delta$ opposite to $\delta_1 v_2$ is the longest, which translates to $(**)$.

\end{proof}

As a consequence of \cref{res:unique_shape}, the interior angles of a face of $P$ do only depend on the type of the face (and the parameters), and so we can introduce the notion of \emph{the} interior angle $\alpha_i^k\in(0,\pi)$ of a $2k$-face at an $i$-vertex.
%
Furthermore, set $\eps_k:=(\alpha_1^k-\alpha_2^k)/2\pi$.
By \cref{res:angles} we have $\eps_k> 0$ and
$$\alpha_1^k = \Big(1-\frac1k+\eps_k\Big)\pi,\qquad \alpha_2^k=\Big(1-\frac 1k-\eps_k\Big)\pi.$$

\begin{definition}
If $\tau=(2k_1,...,2k_s)$ is the type of a vertex, then define
	$$K(\tau):=\sum_{i=1}^s \frac 1{k_i},\qquad E(\tau):=\sum_{i=1}^s \eps_{k_i}.$$
%
\end{definition}

Both quantities are strictly positive.

\begin{proposition}\label{res:K_E_ineq}
Let $v\in \F_0(P)$ be a vertex of type $\tau=(2k_1,...,2k_s)$.
\begin{myenumerate}
	\item If $v\in V_1$, then $E(\tau)<K(\tau)-1$ and $s=3$.
	\item If $v\in V_2$, then $E(\tau)>s-2-K(\tau)$.
%
\end{myenumerate}
\begin{proof}
Let $\sigma_1,...,\sigma_s\in\F_2(P)$ be the faces incident to $v$, so that $\sigma_j$ is a $2k_j$-face.
The interior angle of $\sigma_j$ at $v$ is $\alpha_i^{\smash{k_j}}$, and the sum of these must be smaller than $2\pi$.
In formulas
$$2\pi > \sum_{j=1}^s \alpha_i^{k_{\smash j}} = \sum_{j=1}^s \Big(1-\frac1{k_j}\pm\eps_{k_j} \Big)\pi = (s-K(\tau)\pm E(\tau))\pi,$$
where $\pm$ is the plus sign for $i=1$, and the minus sign for $i=2$.
Rearranging for $E(v)$ yields $(*)\;{\mp E(\tau)} > s - 2 - K(\tau)$.
If $i=2$, this proves $(ii)$.
If $i=1$, note that from the implication $k_j\ge 2\implies K(\tau)\le s/2$ follows
$$s \overset{(*)}< -E(\tau)+K(\tau)+2 \le 0+\frac s2+2 \quad\implies\quad s<4.$$
The minimum degree of a vertex in a polyhedron is at least three, hence $s=3$, and $(*)$ becomes $(i)$.
\end{proof}
\end{proposition}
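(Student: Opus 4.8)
The plan is to bound, for a vertex $v$ of type $\tau=(2k_1,\dots,2k_s)$, the sum of the interior angles of the incident faces at $v$, using the elementary fact that the angles around a vertex of a convex polyhedron sum to strictly less than $2\pi$. By \cref{res:unique_shape} the interior angle of the $2k_j$-face at $v$ depends only on $k_j$ and on whether $v$ is a $1$-vertex or a $2$-vertex; in the notation just introduced it equals $\alpha_1^{k_j}=(1-1/k_j+\eps_{k_j})\pi$ if $v\in V_1$, and $\alpha_2^{k_j}=(1-1/k_j-\eps_{k_j})\pi$ if $v\in V_2$. Summing over $j$ and dividing by $\pi$ turns the inequality $\sum_j\alpha_i^{k_j}<2\pi$ into $s-K(\tau)\pm E(\tau)<2$, where the sign is $+$ for $i=1$ and $-$ for $i=2$. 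This is the one computation that does all the work; everything else is rearrangement.

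For a $2$-vertex this immediately rearranges to $E(\tau)>s-2-K(\tau)$, which is exactly part \itm2, so nothing further is needed there. For a $1$-vertex the same inequality reads $-E(\tau)>s-2-K(\tau)$, i.e.\ $E(\tau)<K(\tau)-1-(s-2)+(s-2)$; more usefully, I would first rearrange it as $s<2+K(\tau)-E(\tau)$ and then feed in two crude bounds: $E(\tau)>0$ (each $\eps_{k_j}>0$ by \cref{res:angles}, since $P$ is strictly bipartite so $r_1<r_2$), and $K(\tau)=\sum_j 1/k_j\le s/2$ because every face is a $2k_j$-gon with $k_j\ge 2$. These give $s<2+s/2$, hence $s<4$. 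Combining with the trivial lower bound $s\ge 3$ (every vertex of a polyhedron has degree at least three, and here degree equals the number of incident faces) forces $s=3$, and then the original inequality $s-K(\tau)+E(\tau)<2$ with $s=3$ becomes $E(\tau)<K(\tau)-1$, which is part \itm1.

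The only genuinely nontrivial input is that the interior angle of a $2k$-face at a vertex of $P$ depends only on $k$ and the vertex class — this is precisely what \cref{res:unique_shape} buys us, and without it the angle sum around $v$ would not be expressible purely in terms of the type $\tau$. Apart from that, the main (mild) obstacle is bookkeeping the $\pm$ signs correctly between the two cases and being careful that the strict inequality $\sum_j\alpha_i^{k_j}<2\pi$ really is strict for a convex polyhedron (it is, since the faces around $v$ cannot be coplanar). No case analysis on the specific values $k_j$ is needed; the bounds $\eps_{k_j}>0$ and $k_j\ge 2$ suffice.
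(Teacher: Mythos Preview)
Your argument is correct and essentially identical to the paper's: both bound the angle sum at $v$ by $2\pi$, rewrite it as $s-K(\tau)\pm E(\tau)<2$, read off \itm2 directly, and for \itm1 combine $E(\tau)>0$ with $K(\tau)\le s/2$ to force $s<4$, hence $s=3$. The intermediate expression $E(\tau)<K(\tau)-1-(s-2)+(s-2)$ is a harmless slip (it collapses to $K(\tau)-1$, which is not yet justified before you know $s=3$), but you immediately abandon it for the correct rearrangement, so the proof stands.
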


This allows us to exclude all but a manageable list of types for 1-vertices.
Note that a vertex $v\in V_1$ has a type of some form $(2k_1,2k_2,2k_3)$.

\begin{corollary}\label{res:1_types}
For a 1-vertex $v\in V_1$ of type $\tau$ holds $K(\tau)>1+E(\tau)>1$. One checks that this leaves exactly the options in \cref{tab:1_types}.
\begin{table}[h!]
\centering
\begin{tabular}{l|l|l}
%
$\tau$ & $K(\tau)$ & $\Gamma$ \\
\hline
$\overset{\phantom.}(4,4,\phantom04)$ & $3/2$ & $I_1\oplus I_1\oplus I_1$ \\
$(4,4,\phantom06)$ & $4/3$ & $I_1\oplus I_2(3)$ \\
$(4,4,\phantom08)$ & $5/4$ & $I_1\oplus I_2(4)$ \\
[-0.4ex]
$\quad\;\;\vdots\;\;$ & $\;\;\vdots$ & $\quad\;\vdots$ \\[0.3ex]
$(4,4,2k)$ & $1+1/k$ & $I_1\oplus I_2(k)$ \\[0.5ex]
\hline
$\overset{\phantom.}(4,6,\phantom06)$ & $7/6$ & $A_3=D_3$ \\
$(4,6,\phantom08)$ & $13/12$ & $B_3$ \\
$(4,6,10)$ & $31/30$ & $H_3$
\end{tabular}
\caption{Possible types of 1-vertices, their $K$-values and the $\Gamma$ of the $\Gamma$-permutahedron in which all vertices have this type.}
\label{tab:1_types}
\end{table}
\end{corollary}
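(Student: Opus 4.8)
The plan is to combine the two constraints $K(\tau) > 1 + E(\tau) > 1$ (from \cref{res:K_E_ineq}(i)) with the fact that $\tau$ has the form $(2k_1, 2k_2, 2k_3)$ with each $k_j \ge 2$, and then enumerate. The key inequality to exploit is simply $K(\tau) = 1/k_1 + 1/k_2 + 1/k_3 > 1$, since $E(\tau) > 0$ gives us $K(\tau) > 1$ directly. So the first step is the purely arithmetic problem: which triples $(k_1, k_2, k_3)$ of integers $\ge 2$ satisfy $1/k_1 + 1/k_2 + 1/k_3 > 1$?

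This is a classical finite enumeration (the same one underlying the classification of spherical/Euclidean triangle groups). Ordering $k_1 \le k_2 \le k_3$, one argues: $k_1 = 2$ is forced unless all three equal $2$ (since $3/k_1 \ge K(\tau) > 1$ forces $k_1 \le 2$ when not all equal — actually $k_1 \le 2$ always here, so $k_1 = 2$). Then $1/k_2 + 1/k_3 > 1/2$, so $2/k_2 > 1/2$, giving $k_2 \le 3$, hence $k_2 \in \{2,3\}$. If $k_2 = 2$, then $1/k_3 > 0$ is no constraint, so $k_3$ is arbitrary — this is the infinite family $(2,2,k_3)$, i.e.\ type $(4,4,2k)$ with $K = 1 + 1/k$. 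If $k_2 = 3$, then $1/k_3 > 1/6$, so $k_3 \le 5$, giving $k_3 \in \{3,4,5\}$, i.e.\ types $(4,6,6)$, $(4,6,8)$, $(4,6,10)$ with $K$-values $7/6$, $13/12$, $31/30$. The computed $K$-values then match the table exactly.

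The second step is to attach the $\Gamma$-permutahedron column. Here I would not reprove anything from scratch: the claim is that for each listed type there is a $\Gamma$-permutahedron all of whose vertices have that type, where $\Gamma$ ranges over the rank-$3$ finite reflection groups and their reducible combinations $I_1 \oplus I_2(k)$ and $I_1 \oplus I_1 \oplus I_1$. This is a direct inspection: $A_3, B_3, H_3$ give the $(4,6,6), (4,6,8), (4,6,10)$ permutahedra (the truncated octahedron, truncated cuboctahedron, truncated icosidodecahedron), and $I_1 \oplus I_2(k)$ gives the prism over a $2k$-gon with $(4,4,2k)$ vertices; I would just cite \cref{def:permutahedron} and the standard combinatorics of Coxeter permutahedra, or simply present the table as the verification itself since the statement only says "one checks."

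The main obstacle, such as it is, is making the enumeration airtight — in particular being careful that $E(\tau) > 0$ genuinely rules out nothing beyond what $K(\tau) > 1$ already does (it doesn't, for the purpose of listing candidate types, though $E(\tau)$ will matter in later sections to break ties). I expect essentially no difficulty: this is a short finitary argument. The only mild subtlety is that the table also implicitly records, via the last column, a fact we are not yet proving (that these types actually occur, and only inside permutahedra); for the corollary as stated we only need the list of arithmetically-allowed types, and the $\Gamma$-column should be read as bookkeeping for what comes later rather than part of the claim being proved here.
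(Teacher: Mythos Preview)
Your proposal is correct and follows exactly the approach the paper intends: the paper literally says ``one checks'' and defers the arithmetic, while you carry out the standard enumeration of triples $(k_1,k_2,k_3)$ with $k_i\ge 2$ and $\sum 1/k_i>1$, which is precisely what is required. Your reading of the $\Gamma$-column as bookkeeping (realizability examples, not part of the claim being proved) also matches the paper's own commentary immediately following the corollary.
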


The types in \cref{tab:1_types} are called the \emph{possible types} of 1-vertices.
Each of the possible types is realizable in the sense that there exists a bipartite polyhedron in which all 1-vertices have this type.
Examples are provided by the $\Gamma$-permutahedra (the $\Gamma$ of that $\Gamma$-permutahedron is listed in the right column of \cref{tab:1_types}).
These are not \emph{strictly} bipartite though.

The convenient thing about $\Gamma$-permutahedra is that all their vertices are of the same type.
We cannot assume this for general strictly bipartite polyhedra, not even for all 1-vertices.

\subsection{Spherical polyhedra}

The purpose of this section is to define a second notion of interior angle for each face.
These angles can be defined in several equivalent~ways, one of which is via spherical polyhedra.

A \emph{spherical polyhedron} is an embedding of a planar graph into the unit sphere, so that all edges are embedded as great circle arcs, and all regions are convex\footnote{Convexity on the sphere means that the shortest great circle arc connecting any two points in the region is also contained in the region.}.
If $0\in\mathrm{int}(P)$, we can associate to $P$ a spherical polyhedron $P^S$ by applying central projection
$$\RR^3\nozero\to\Sph_1(0),\quad x\mapsto\frac x{\|x\|}$$
to all its vertices and edges (this process is visualized below).
\begin{center}
\includegraphics[width=0.42\textwidth]{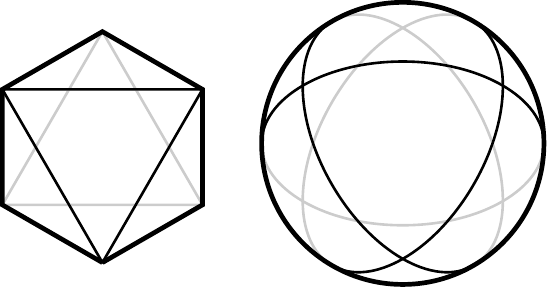}
\end{center}

The vertices, edges and faces of $P$ have spherical counterparts in $P^S$ obtained as projections onto the unit sphere.
Those will be denoted with a superscript \enquote{$S$\,}.
For example, if $e\in\F_1(P)$ is an edge of $P$, then $e^{\smash S}$ denotes the corresponding \enquote{spherical edge}, which is a great circle arc obtained as the projection of $e$ onto the sphere.

%
We still need to justify that the spherical polyhedron of $P$ is well-defined, by proving that $P$ contains the origin:

\begin{proposition}\label{res:0_in_P}
$0\in\mathrm{int}(P)$.
\begin{proof}
The proof proceeds in several steps.

\textit{Step 1}:
Fix a 1-vertex $v\in V_1$ with neighbors $w_1,w_2,w_3\in V_2$, and let $u_i:=w_i-v$ be the direction of the edge $\conv\{v,w_i\}$ emanating from $v$.
Let $\sigma_{ij}\in\F_2(P)$ denote the $2k$-face containing $v,w_i$ and $w_j$.
The interior angle of $\sigma_{ij}$ at $v$ is then $\angle(u_i,u_j)$, which by \cref{res:angles} and $k\ge 2$ satisfies 
$$\angle(u_i,u_j)>\Big(1-\frac1k\Big)\pi \ge \frac\pi2\quad\implies\quad \<u_i,u_j\><0.$$

\textit{Step 2}:
Besides $v$, the polyhedron $P$ contains another 1-vertex $v'\in V_1$.
It then holds $v'\in v+\cone\{u_1,u_2,u_3\}$, which  means that there are non-negative coefficients $a_1,a_2,a_3\ge 0$, at least one positive, so that $v+a_1 u_1+ a_2u_2 + a_3u_3 = v'$.
Rearranging and applying $\<v,\free\>$ yields
\begin{align*}
a_1\<v,u_1\>+a_2\<v,u_2\>+a_3\<v,u_3\>
&=\<v,v'\>-\<v,v\> 
\tag{$*$}
\\&= r_1^2\cos\angle(v,v')-r_1^2 < 0.
\tag*{}
\end{align*}
The value $\<v,u_i\>$ is independent of $i$ (see \cref{res:angles_between_vertices}).
Since there is at least one positive coefficient $a_i$, from $(*)$ follows $\<v,u_i\><0$.\footnote{Note that this provides the formal proof mentioned in \cref{rem:alternative_definiton}, namely, that the triangle $\conv\{0,v_1,v_2\}$ is acute at $v_1$ and $v_2$.}

\textit{Step 3}:
By the previous steps, $\{v,u_1,u_2,u_3\}$ is a set of four vectors with pair-wise negative inner product.
The convex hull of such an arrangement in 3-dimensional Euclidean space does necessarily contain the origin in its interior, or equivalently, there are positive coefficients $a_0,...,a_3>0$ with $a_0 v+a_1 u_1+a_2 u_2+a_3 u_3=0$ (for a proof, see \cref{res:sum_to_zero}).
In other words: $0\in v+\mathrm{int}(\cone\{u_1,u_2,u_3\})$.

\textit{Step 4}:
If $H(\sigma)$ denotes the half-space associated with the face $\sigma\in\F_2(P)$, then
$$0\in v+\mathrm{int}(\cone\{u_1,u_2,u_3\}) = \bigcap_{\sigma\sim v} \mathrm{int}(H(\sigma)).$$
Thus, $0\in\mathrm{int}(H(\sigma))$ for all faces $\sigma$ incident to $v$.
But since every face is incident to a 1-vertex, we obtain $0\in\mathrm{int}(H(\sigma))$ for all $\sigma\in\F_2(P)$, and thus $0\in\mathrm{int}(P)$ as well.

\end{proof}
\end{proposition}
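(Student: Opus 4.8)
The plan is to show that the origin lies in the interior of every facet-defining halfspace of $P$. Since $P$ is full-dimensional, $P=\bigcap_{\sigma\in\F_2(P)}H(\sigma)$, where $H(\sigma)$ is the halfspace bounded by $\aff(\sigma)$ that contains $P$; so it suffices to prove $0\in\mathrm{int}(H(\sigma))$ for every face $\sigma$. Every face of $P$ is a bipartite $2k$-gon with $k\ge 2$ (by \cref{res:faces_of_bipartite} and \cref{res:2k_gons}), hence contains at least one vertex of $V_1$; so it is enough to fix a $1$-vertex $v\in V_1$ and establish $0\in\mathrm{int}(H(\sigma))$ for all faces $\sigma$ incident to $v$. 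By \cref{res:K_E_ineq}, $v$ has degree exactly $3$, hence it is a simple vertex: writing $w_1,w_2,w_3\in V_2$ for its neighbours and $u_i:=w_i-v$, the vectors $u_1,u_2,u_3$ are linearly independent, the vertex cone at $v$ is the simplicial cone $v+\cone\{u_1,u_2,u_3\}$, and $\bigcap_{\sigma\ni v}\mathrm{int}(H(\sigma))=v+\mathrm{int}(\cone\{u_1,u_2,u_3\})$. Thus the whole statement reduces to exhibiting $a_1,a_2,a_3>0$ with $a_1u_1+a_2u_2+a_3u_3=-v$.

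I would obtain this from an inner-product sign analysis of the four vectors $v,u_1,u_2,u_3\in\RR^3$. First, $\langle u_i,u_j\rangle<0$ for $i\ne j$: the interior angle at the $1$-vertex $v$ of the face containing $v,w_i,w_j$ equals $\angle(u_i,u_j)$, and since $P$ is \emph{strictly} bipartite this angle is $\alpha_1^k=(1-1/k+\eps_k)\pi>(1-1/k)\pi\ge\pi/2$ by \cref{res:angles}. Granting in addition that $\langle v,u_i\rangle<0$ for each $i$, the vectors $v,u_1,u_2,u_3$ are pairwise obtuse; the elementary fact that $d+1$ pairwise obtuse vectors in $\RR^d$ possess a vanishing combination with strictly positive coefficients (this is \cref{res:sum_to_zero}) then gives $a_0v+a_1u_1+a_2u_2+a_3u_3=0$ with all $a_i>0$, which is the desired relation after dividing by $a_0$.

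The step I expect to be the crux is $\langle v,u_i\rangle<0$. The argument I would use rests on two observations. The first is that $\langle v,w_i\rangle$ does not depend on the neighbour $w_i$ of $v$ (by \cref{res:angles_between_vertices} it equals $(r_1^2+r_2^2-\ell^2)/2$); consequently $\langle v,u_i\rangle=\langle v,w_i\rangle-r_1^2$ is one and the same number $c$ for all $i$. The second is that $P$ contains a second $1$-vertex $v'\ne v$ — indeed $|V_1|=1$ would force every face of $P$ to contain $v$, impossible for a $3$-polytope. Since $v'\in P\subseteq v+\cone\{u_1,u_2,u_3\}$, write $v'-v=\sum_i a_iu_i$ with $a_i\ge 0$ not all zero; pairing with $v$ yields $(a_1+a_2+a_3)c=\langle v,v'\rangle-r_1^2=r_1^2(\cos\angle(v,v')-1)<0$, the strict inequality because $v'\ne v$ while $\|v'\|=\|v\|=r_1$. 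As $a_1+a_2+a_3>0$, this forces $c<0$, i.e.\ $\langle v,u_i\rangle<0$.

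Assembling the pieces: the positive relation $a_1u_1+a_2u_2+a_3u_3=-v$ shows $0\in v+\mathrm{int}(\cone\{u_1,u_2,u_3\})\subseteq\mathrm{int}(H(\sigma))$ for every face $\sigma$ incident to $v$; letting $v$ range over all $1$-vertices and using that every face is incident to one, we get $0\in\mathrm{int}(H(\sigma))$ for all $\sigma\in\F_2(P)$, hence $0\in\mathrm{int}(P)$. As a by-product, the inequality $\langle v,u_i\rangle<0$ is exactly the acuteness of the triangle $\conv\{0,v,w_i\}$ at $v$ (acuteness at $w_i$ being immediate from $r_1<r_2$), which is the subtlety flagged in \cref{rem:alternative_definiton}.
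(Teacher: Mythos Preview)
Your proof is correct and follows essentially the same route as the paper's: obtain pairwise obtuseness of $u_1,u_2,u_3$ from the interior-angle bound at a $1$-vertex, deduce $\langle v,u_i\rangle<0$ from the existence of a second $1$-vertex $v'$ in the tangent cone, apply \cref{res:sum_to_zero} to the four pairwise-obtuse vectors, and conclude via the halfspace decomposition. The only differences are expository---you make explicit the citation of \cref{res:K_E_ineq} for $\deg v=3$ and supply a reason for $|V_1|\ge 2$, both of which the paper leaves implicit.
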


The main reason for introducing spherical polyhedra is that we can talk about the \emph{spherical interior angles} of their faces.

Let $\sigma\in \F_2(P)$ be a face, and $v\in\F_0(\sigma)$ one of its vertices.
Let $\alpha(\sigma,v)$ denote the interior angle of $\sigma$ at $v$, and $\beta(\sigma, v)$ the spherical interior angle of $\sigma^S$ at $v^S$.
It only needs a straight-forward computation (involving some spherical geometry) to establish a direct relation between these angles: \eg\ if $v$ is a 1-vertex, then
$$\sin^2(\ell^S)\cdot (1-\cos\beta(\sigma,v)) = \Big(\frac{\ell}{r_2}\Big)^2\!\!\cdot (1-\cos\alpha(\sigma,v)),$$
where $\ell^S$ denotes the arc-length of an edge of $P^S$ (indeed, all edges are of the same length).
An equivalent formula exists for 2-vertices.
The details of the computation are not of relevance, but can be found in \cref{sec:computations}.

The core message is that the value of $\alpha(\sigma,v)$ uniquely determines the value of $\beta(\sigma,v)$ and vice versa.
In particular, since the value of $\alpha(\sigma,v)=\alpha_i^{\smash k}$ does only depend on the type of the face and the partition class of the vertex, so does $\beta(\sigma,v)$, and it makes sense to introduce the notion $\beta_i^k$ for the spherical interior angle of a $2k$-gonal spherical face of $P^S$ at (the projection of) an $i$-vertex.
Thus, we have
\begin{equation}\label{eq:angle_relations}
\beta_i^{k_1}=\beta_i^{k_2}\quad\Longleftrightarrow\quad\alpha_i^{k_1}=\alpha_i^{k_2} \quad\overset{\ref{res:unique_shape}}\Longleftrightarrow\quad k_1=k_2,
\end{equation}
where we use \cref{res:unique_shape} for the last equivalence.

\begin{observation}\label{res:spherical_interior_angles_are_good}
The spherical interior angles $\beta_i^k$ have the following properties:
\begin{myenumerate}
	\item The spherical interior angles surrounding a vertex add up to exactly $2\pi$. That is, for an $i$-vertex $v\in\F_0(P)$ of type $(2k_1,...,2k_s)$ holds
	$$\beta_i^{k_1}+\cdots+\beta_i^{k_s} = 2\pi.$$
	
	\item
	The sum of interior angles of a spherical polygon always exceed the interior angle sum of a respective flat polygon. That is, it holds
	$$k \beta_1^k+k\beta_2^k > 2(k-1)\pi\quad\Longrightarrow\quad \beta_1^k+\beta_2^k > 2\Big(1-\frac1k\Big)\pi.$$
\end{myenumerate}
\end{observation}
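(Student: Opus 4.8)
The plan is to prove the two parts separately, in both cases exploiting that $0\in\mathrm{int}(P)$ (\cref{res:0_in_P}), so that the central projection $\partial P\to\Sph_1(0)$ is a homeomorphism and, in particular, a local homeomorphism near every vertex.

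For \itm1, fix an $i$-vertex $v$ of type $(2k_1,\dots,2k_s)$ and let $\sigma_1,\dots,\sigma_s$ be its incident faces, cyclically ordered so that consecutive ones share an edge of $P$ at $v$; such an ordering exists because the link (vertex figure) of a vertex of a $3$-polytope is a cycle. By definition, $\beta_i^{k_j}=\beta(\sigma_j,v)$ is the angle at $v^S$ between the initial tangent directions of the two spherical edges of $\sigma_j^S$ incident to $v^S$, an angle measured in the tangent plane $T_{v^S}\Sph_1(0)=v^\perp$. Concretely, for an edge $e=\conv\{v,w\}$ of $P$ the tangent direction of $e^S$ at $v^S$ is the orthogonal projection of $w-v$ onto $v^\perp$ (nonzero, since no edge of $P$ is radial: otherwise $v$ would lie in the relative interior of the segment from $0$ to the other endpoint, contradicting that $v$ is a vertex). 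Since consecutive spherical faces $\sigma_j^S,\sigma_{j+1}^S$ share the spherical edge corresponding to their common edge of $P$, and since central projection maps a small neighborhood of $v$ in $\partial P$ homeomorphically onto a small spherical disk around $v^S$, the plane wedges these faces cut out of $T_{v^S}\Sph_1(0)$ tile the full circle of directions exactly once; hence $\beta_i^{k_1}+\cdots+\beta_i^{k_s}=2\pi$.

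For \itm2, let $\sigma\in\F_2(P)$ be a $2k$-face. Its affine hull is a supporting hyperplane of $P$ and $0\in\mathrm{int}(P)$, so $\sigma$ has positive height; hence $\cone(\sigma)$ is a pointed convex polyhedral cone with apex $0$, and $\sigma^S=\cone(\sigma)\cap\Sph_1(0)$ is a convex spherical $2k$-gon with nonempty interior (contained in an open hemisphere, since a pointed cone lies in an open half-space through $0$). By the spherical excess formula (Girard's theorem), the interior angles of a convex spherical $n$-gon sum to $(n-2)\pi$ plus its strictly positive spherical area. As $\sigma$ is bipartite its $2k$ vertices alternate between $1$-vertices and $2$-vertices, so exactly $k$ of the spherical interior angles of $\sigma^S$ equal $\beta_1^k$ and $k$ equal $\beta_2^k$; therefore $k\beta_1^k+k\beta_2^k=(2k-2)\pi+\mathrm{area}(\sigma^S)>2(k-1)\pi$, and dividing by $k$ gives $\beta_1^k+\beta_2^k>2(1-1/k)\pi$. (Alternatively, triangulating $\sigma^S$ from one vertex into $2k-2$ small spherical triangles, each of angle sum exceeding $\pi$, and summing yields the same inequality without invoking Girard's theorem for polygons.)

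The main obstacle is \itm1: although geometrically evident, it requires carefully combining three ingredients — that $0\in\mathrm{int}(P)$ makes central projection a local homeomorphism near $v$, that the incident faces form a single cyclic fan around $v$, and that the spherical interior angle of a spherical face at $v^S$ is exactly the ordinary Euclidean angle in the tangent plane between the arc-tangent directions — so that additivity around $v^S$ reduces to the additivity of angles of a fan of plane wedges about a point. Part \itm2 is then a routine application of the spherical excess formula once the convexity of $\sigma^S$ has been recorded.
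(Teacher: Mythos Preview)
Your argument is correct. Note, however, that in the paper this statement is recorded as an \emph{Observation} and is not given a proof at all: the two items are quoted as standard facts of spherical geometry (angles around a point on the sphere sum to $2\pi$; the interior-angle sum of a spherical polygon exceeds that of the corresponding flat polygon by its area). So there is no paper-proof to compare against; what you have written is precisely the routine justification that the paper leaves implicit. Your handling of \itm1 via the tangent plane at $v^S$ is the clean way to say it (and your remark that projecting $w-v$ or $w$ onto $v^\perp$ gives the same direction is right), and your use of the spherical excess formula for \itm2 is exactly the intended background fact.
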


This has some consequences for the strictly bipartite polyhedron $P$:

\begin{corollary}\label{res:incompatible_1_types}
$P$ contains at most two different types of 1-vertices, and if there are two, then one is of the form $(4,4,2k)$, and the other one is of the form $(4,6,2k')$ for distinct $k\not=k'$ and $2k'\in\{6,8,10\}$.
%
\begin{proof}
Each possible type listed in \cref{tab:1_types} is either of the form $(4,4,2k)$ or of the form $(4,6,2k')$ for some $2k\ge 4$ or $2k'\in\{6,8,10\}$.

%

If $P$ contains simultaneously 1-vertices of type $(4,4,2k_1)$ and $(4,4,2k_2)$, apply \cref{res:spherical_interior_angles_are_good} $(i)$ to see
$$
\beta_1^2+\beta_1^2+\beta_1^{k_1}
\overset{(i)}=\beta_1^2+\beta_1^2+\beta_1^{k_2} 
\; \implies \; 
\beta_1^{k_1}=\beta_1^{k_2}
\; \overset{\eqref{eq:angle_relations}}\implies \; 
k_1=k_2.$$
If $P$ contains simultaneously 1-vertices of type $(4,6,2k_1')$ and $(4,6,2k_2')$, then
$$\beta_1^2+\beta_1^3+\beta_1^{k_1'}
\overset{(i)}=\beta_1^2+\beta_1^3+\beta_1^{k_2'} 
\; \implies \;
\beta_1^{k_1'}=\beta_1^{k_2'}
\; \overset{\eqref{eq:angle_relations}}\implies \;
 k_1'=k_2'.$$
Finally, if $P$ contains simultaneously 1-vertices of type $(4,4,2k)$ and $(4,6,2k')$, then
$$\beta_1^2+\beta_1^2+\beta_1^{k}\overset{(i)}=\beta_1^2+\beta_1^3+\beta_1^{k'} \; \implies \; \beta_1^k-\beta_1^{k'} = \underbrace{\beta_1^3-\beta_1^2}_{\text{$\not=0$ by \eqref{eq:angle_relations}}} \;\overset{\eqref{eq:angle_relations}}\implies \; k\not=k'.$$
\end{proof}
\end{corollary}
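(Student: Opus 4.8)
The plan is to combine the finite list of admissible 1-vertex types from \cref{tab:1_types} with the two key facts already established: the injectivity statement \eqref{eq:angle_relations}, namely $\beta_1^{k_1}=\beta_1^{k_2}\iff k_1=k_2$, and the vertex-angle identity \cref{res:spherical_interior_angles_are_good}~\itm1, that the spherical interior angles around an $i$-vertex sum to exactly $2\pi$. The whole argument is then just bookkeeping with these relations.

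First I would observe that every type appearing in \cref{tab:1_types} belongs to exactly one of two families: the family $(4,4,2k)$ with $k\ge 2$, and the family $(4,6,2k')$ with $2k'\in\{6,8,10\}$. Hence it suffices to show that $P$ has at most one 1-vertex type from each family, and that if it realizes one type from each family then the corresponding $k$ and $k'$ are distinct. For the first claim, suppose $P$ had 1-vertices of types $(4,4,2k_1)$ and $(4,4,2k_2)$. Applying \cref{res:spherical_interior_angles_are_good}~\itm1 at each gives $2\beta_1^2+\beta_1^{k_1}=2\pi=2\beta_1^2+\beta_1^{k_2}$, so $\beta_1^{k_1}=\beta_1^{k_2}$, and \eqref{eq:angle_relations} forces $k_1=k_2$. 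The same computation with the pair $(4,6,2k_1')$ and $(4,6,2k_2')$ uses $\beta_1^2+\beta_1^3+\beta_1^{k_1'}=2\pi=\beta_1^2+\beta_1^3+\beta_1^{k_2'}$ and again yields $k_1'=k_2'$.

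For the remaining claim, suppose $P$ has a 1-vertex of type $(4,4,2k)$ and one of type $(4,6,2k')$. Equating the two angle sums from \cref{res:spherical_interior_angles_are_good}~\itm1 gives $2\beta_1^2+\beta_1^k=\beta_1^2+\beta_1^3+\beta_1^{k'}$, hence $\beta_1^k-\beta_1^{k'}=\beta_1^3-\beta_1^2$. The right-hand side is nonzero by \eqref{eq:angle_relations} (since $2\neq 3$), so $\beta_1^k\neq\beta_1^{k'}$ and therefore $k\neq k'$. Combining the three cases, $P$ has at most two 1-vertex types, and if it has two they are of the stated forms $(4,4,2k)$ and $(4,6,2k')$ with $k\neq k'$ and $2k'\in\{6,8,10\}$.

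I do not expect a genuine obstacle here: the only substantive ingredients are \cref{res:spherical_interior_angles_are_good}~\itm1 — this is exactly where sphericity is essential, since the flat interior angles $\alpha_1^{k_i}$ do \emph{not} sum to $2\pi$ around a vertex — and the injectivity \eqref{eq:angle_relations}. The one point worth flagging for the reader is that the restriction $2k'\in\{6,8,10\}$ in the statement is not produced by this argument but is simply inherited from \cref{tab:1_types} (equivalently \cref{res:1_types}); the present proof only constrains \emph{how many} types can coexist and which families they lie in.
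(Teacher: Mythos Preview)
Your proposal is correct and follows essentially the same approach as the paper: both arguments split the admissible types from \cref{tab:1_types} into the two families $(4,4,2k)$ and $(4,6,2k')$, apply \cref{res:spherical_interior_angles_are_good}~\itm1 to equate the angle sums at two 1-vertices, and then invoke \eqref{eq:angle_relations} to conclude. Your added remarks about why sphericity is essential and about the provenance of the constraint $2k'\in\{6,8,10\}$ are useful clarifications that the paper leaves implicit.
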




Since each edge of $P$ is incident to a 1-vertex, we obtain

\begin{observation}\label{res:edge_types}
If $P$ has only 1-vertices of types $(4,4,2k)$ and $(4,6,2k')$,  then each edge of $P$ is of one of the types
$$\underbrace{(4,4), \; (4,2k)}_{\mathclap{\text{\textup{from a $(4,4,2k)$-vertex}}}}, \; \underbrace{(4,6), \; (4,2k') \quad\text{or}\quad (6,2k')}_{\text{\textup{from a $(4,6,2k')$-vertex}}}.$$
\end{observation}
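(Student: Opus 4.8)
\textbf{Proof plan for \cref{res:edge_types}.}
The statement is a direct combinatorial consequence of bipartiteness together with the local structure around 1-vertices, so the argument will be short. First I would recall that, since the edge-graph $G_P=(V_1\cupdot V_2,E)$ is bipartite, every edge $e\in\F_1(P)$ has exactly one endpoint in $V_1$; call it $v$, so $v$ is a 1-vertex. In a $3$-polytope every edge is incident to exactly two $2$-faces, and both of these faces are incident to $v$ as well (they are precisely the two faces of $P$ containing the edge $e$, hence they contain the vertex $v\in e$). Therefore the type of $e$ is an unordered pair drawn from the (three) face-types occurring at $v$.

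Next I would split into the two cases allowed by hypothesis. If $\tau(v)=(4,4,2k)$, then the faces incident to $v$ are two $4$-faces and one $2k$-face, so the pair of faces meeting along $e$ is either the two $4$-faces, giving a $(4,4)$-edge, or one of the $4$-faces together with the $2k$-face, giving a $(4,2k)$-edge. If instead $\tau(v)=(4,6,2k')$, then the faces at $v$ are one $4$-face, one $6$-face and one $2k'$-face, and the $\binom{3}{2}=3$ possible pairs yield exactly the types $(4,6)$, $(4,2k')$, or $(6,2k')$. Taking the union over the two possibilities for $\tau(v)$ gives precisely the list
\[
(4,4),\;(4,2k),\;(4,6),\;(4,2k'),\;(6,2k'),
\]
as claimed.

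There is essentially no obstacle here: the only thing to be careful about is invoking bipartiteness to guarantee that \emph{every} edge has a $1$-vertex endpoint (so that \cref{res:incompatible_1_types} applies to classify $\tau(v)$), and the elementary fact that the two faces along an edge both contain that shared endpoint. No metric input is needed beyond what has already been established.
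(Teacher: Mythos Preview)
Your argument is correct and matches the paper's own reasoning: the observation is stated there without a formal proof, preceded only by the remark that ``each edge of $P$ is incident to a 1-vertex,'' which is precisely the bipartiteness step you spell out. The case split on $\tau(v)$ and the enumeration of face-pairs at $v$ is exactly the intended content.
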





\begin{corollary}\label{res:dihedral_angles}
The dihedral angle of an edge $e\in\F_1(P)$ of $P$ only depends on its type.
%
\begin{proof}
Suppose that $e$ is a $(2k_1,2k_2)$-edge. 
Then $e$ is incident to a 1-vertex $v\in V_1$ of type $(2k_1,2k_2, 2k_3)$.
By \cref{res:spherical_interior_angles_are_good} $(i)$ holds $\beta_1^{\smash{k_3}}=2\pi-\beta_1^{\smash{k_1}}-\beta_1^{\smash{k_2}}$, which further determines $k_3$.
By \cref{res:unique_shape} we have uniquely determined interior angles $\alpha_1^{\smash{k_1}},\alpha_1^{\smash{k_2}}$ and $\alpha_1^{\smash{k_3}}$.

It is known that for a simple vertex (that is, a vertex of degree three) the interior angles of the incident faces already determine the dihedral angles at the incident edges (for a proof, see the Appendix, \cref{res:simple_vertex}).
Consequently, the dihedral angle at $e$ is already determined.
\end{proof}
\end{corollary}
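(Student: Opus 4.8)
The plan is to exploit that every edge of $P$ is incident to a $1$-vertex — its endpoint in $V_1$, since the edge-graph is bipartite — and that $1$-vertices are \emph{simple}, i.e.\ of degree three, with a highly constrained local shape. So let $e$ be an edge of type $(2k_1,2k_2)$ and fix a $1$-vertex $v\in V_1$ incident to $e$. By \cref{res:K_E_ineq} $(i)$, $v$ has exactly three incident faces, so its type has the form $(2k_1,2k_2,2k_3)$ for some third value $k_3$.

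First I would show that $k_3$, and hence the full vertex type $\tau(v)$, is already determined by the pair $(k_1,k_2)$. Indeed, by \cref{res:spherical_interior_angles_are_good} $(i)$ the three spherical interior angles around $v$ sum to $2\pi$, so $\beta_1^{k_3}=2\pi-\beta_1^{k_1}-\beta_1^{k_2}$; as the parameters of $P$ are fixed, this value of $\beta_1^{k_3}$ then pins down $k_3$ via the equivalence \eqref{eq:angle_relations}. Thus $\tau(v)$ is a function of the type of $e$ alone, and by \cref{res:unique_shape} the three faces meeting at $v$ then have determined (Euclidean) interior angles $\alpha_1^{k_1},\alpha_1^{k_2},\alpha_1^{k_3}$ at $v$.

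The last step is the classical fact that at a simple (trivalent) vertex of a convex polyhedron the three face angles rigidly determine the three dihedral angles along the incident edges — a consequence of a spherical triangle being determined by its side lengths, worked out as \cref{res:simple_vertex} in the appendix. Applying it at $v$, the dihedral angle along $e$ is a function of $\alpha_1^{k_1},\alpha_1^{k_2},\alpha_1^{k_3}$, hence ultimately of the edge type $(2k_1,2k_2)$, as claimed.

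The only genuine work is the simple-vertex lemma itself, and that is deferred to the appendix; I expect it to be the main (though routine) obstacle. The one point requiring care is that the argument must be based at a $1$-vertex rather than at an arbitrary endpoint of $e$: only $1$-vertices are known to be simple and to carry one of the few admissible types from \cref{tab:1_types}. Since every edge has a $1$-vertex as an endpoint, this restriction costs nothing.
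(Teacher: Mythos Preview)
Your proof is correct and follows essentially the same route as the paper's own argument: pass to the $1$-vertex endpoint of $e$, use \cref{res:spherical_interior_angles_are_good}\,$(i)$ to recover $k_3$ from $(k_1,k_2)$, then invoke \cref{res:unique_shape} and the simple-vertex lemma \cref{res:simple_vertex} to conclude. If anything, your version is slightly more explicit in citing \cref{res:K_E_ineq}\,$(i)$ for the trivalence of $1$-vertices and \eqref{eq:angle_relations} for why $\beta_1^{k_3}$ pins down $k_3$.
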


The next result shows that $\Gamma$-permutahedra are the only bipartite polytopes in which a 1-vertex and a 2-vertex can have the same type.

\begin{corollary}\label{res:1_2_vertex_same_type}
$P$ cannot contain a 1-vertex and a 2-vertex of the same type.
\begin{proof}
Let $v\in\F_0(P)$ be a vertex of type $(2k_1,2k_2,2k_3)$.
The incident edges are of type $(2k_1,2k_2)$, $(2k_2,2k_3)$ and $(2k_3,2k_1)$ respectively.
By \cref{res:dihedral_angles} the dihedral angles of these edges are uniquely determined, and since $v$ is simple (that is, has degree three), the interior angles of the incident faces are also uniquely determined (\cf\ Appendix, \cref{res:simple_vertex}).
In particular, we obtain the same angles independent of whether $v$ is a 1-vertex or a 2-vertex.

A 1-vertex is always simple, and thus, a 1-vertex and a 2-vertex of the same type would have the same interior angles at all incident faces, that is, $\alpha_1^k=\alpha_2^k$ for each incident $2k$-face.
But this is not possible if $P$ is \emph{strictly} bipartite (by \cref{res:faces_of_bipartite} $(ii)$ and \cref{res:angles}).
\end{proof}
\end{corollary}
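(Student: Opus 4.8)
The plan is to argue by contradiction. Suppose $P$ contains a $1$-vertex $v'$ and a $2$-vertex $v''$ both of type $\tau=(2k_1,2k_2,2k_3)$. I will show this forces $\alpha_1^{k_j}=\alpha_2^{k_j}$ for some (in fact every) $j\in\{1,2,3\}$, which is impossible for a \emph{strictly} bipartite polyhedron.

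First I would note that any vertex whose type has three entries is \emph{simple}, i.e.\ has degree three, regardless of its partition class (and $1$-vertices are automatically of this form by \cref{res:K_E_ineq}), and that its three incident edges carry the types $(2k_1,2k_2)$, $(2k_2,2k_3)$ and $(2k_3,2k_1)$. Since the edge-graph of $P$ is bipartite, every edge of $P$ is incident to a $1$-vertex, so \cref{res:dihedral_angles} applies and the dihedral angle at each of these three edges is determined by its type alone; hence the triple of dihedral angles around a vertex of type $\tau$ depends only on $\tau$. Next I would invoke the fact about simple vertices recorded in the appendix as \cref{res:simple_vertex}: at a simple vertex the dihedral angles of the incident edges and the interior angles of the incident faces determine one another (this is the side--angle duality of the spherical triangle serving as the vertex figure). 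Combining the two observations, the interior angle of the $2k_j$-face at a vertex of type $\tau$ is a function of $\tau$ alone -- the same number whether that vertex lies in $V_1$ or in $V_2$. Applying this to $v'$ and $v''$ gives $\alpha_1^{k_j}=\alpha_2^{k_j}$ for each $j$.

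To close the argument I would invoke strict bipartiteness. By \cref{res:faces_of_bipartite}\,$(ii)$ a $2k_j$-face of $P$ is itself strictly bipartite, so its radii satisfy $\rho_1<\rho_2$, and \cref{res:angles} then yields the strict inequalities $\alpha_2^{k_j}<\alpha_{\mathrm{reg}}^{k_j}<\alpha_1^{k_j}$, contradicting the equality just derived. The only step that needs a little care is using \cref{res:simple_vertex} in the direction ``dihedral angles $\Rightarrow$ interior angles'' rather than the direction quoted in the proof of \cref{res:dihedral_angles}; both directions are legitimate, since a spherical triangle is determined up to congruence by either its three sides (the face angles) or its three angles (the dihedral angles). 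Everything else is bookkeeping with the type notation.
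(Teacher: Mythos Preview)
Your proof is correct and follows essentially the same route as the paper's: use \cref{res:dihedral_angles} to pin down the dihedral angles from the edge types, invoke \cref{res:simple_vertex} (in the direction dihedral angles $\Rightarrow$ face angles) to conclude that the interior angles at a simple vertex of type $\tau$ are determined by $\tau$ alone, and then derive the contradiction $\alpha_1^{k_j}=\alpha_2^{k_j}$ against strict bipartiteness via \cref{res:faces_of_bipartite}\,$(ii)$ and \cref{res:angles}. Your added remarks---that the $2$-vertex is also simple because its type has three entries, and that \cref{res:simple_vertex} is being used in the reverse direction---are accurate and make explicit what the paper leaves implicit.
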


\subsection{Adjacent pairs}


Given a 1-vertex $v\in V_1$ of type $\tau_1=(2k_1,2k_2,2k_3)$, for any two distinct $i,j\in\{1,2,3\}$, $v$ has a neighbor $w\in V_2$ of type $\tau_2=(2k_i,2k_j,*,...,*)$, where $*$ are placeholders for unknown entries.
The pair of types
$$(\tau_1,\tau_2)=((2k_1,2k_2,2k_3),\,(2k_i,2k_j,*,...,*))$$
%
%
is called an \emph{adjacent pair} of $P$.
It is the purpose of this section to show that certain adjacent pairs cannot occur in $P$.
Excluding enough adjacent pairs for fixed $\tau_1$ then proves that the type $\tau_1$ cannot occur as the type of a 1-vertex.

Our main tools for achieving this will be the inequalities established in \cref{res:K_E_ineq} $(i)$ and $(ii)$, that is
$$E(\tau_1)\overset{\mathclap{(i)}}<K(\tau_1)-1\quad\text{and}\quad E(\tau_2)\overset{\mathclap{(ii)}}>s-2-K(\tau_2),$$
where $s$ is the number of elements in $\tau_2$.
For a warmup, and as a template for~fur\-ther calculations, we prove that the adjacent pair $(\tau_1,\tau_2)=((4,6,8),(6,8,8))$ will not occur in $P$.

\begin{example}\label{ex:infeasible}
By \cref{res:K_E_ineq} $(i)$ we have
$$(*)\quad \eps_2+\eps_3+\eps_4=E(\tau_1)\overset{(i)}<K(\tau_1)-1 = \frac12+\frac13+\frac14-1=\frac1{12}.$$
On the other hand, by \cref{res:K_E_ineq} $(ii)$ we have
\begin{align*}
(**)\quad \frac2{12} = 3-2-\Big(\frac13+\frac14+\frac14\Big)&=s-2-K(\tau_2) \\[-1ex]
&\overset{(ii)}< E(\tau_2) = \underbrace{\eps_3+\eps_4}_{< 1/12}+\underbrace{\eps_4}_{\mathclap{<1/12}} < \frac2{12},
\end{align*}
which is a contradiction.
Hence this adjacent pair cannot occur.
Note that we used $(*)$ to upperbound certain sums of $\eps_i$ in $(**)$.
\end{example}

An adjacent pair excluded by using the inequalities from \cref{res:K_E_ineq} $(i)$ and $(ii)$ as demonstrated in \cref{ex:infeasible} will be called \emph{infeasible}.

The argument applied in \cref{ex:infeasible} will be repeated many times for many different adjacent pairs in the upcoming sections \cref{sec:468,sec:4610,sec:466,sec:442k}, and we shall therefore use a tabular form to abbreviate it.
After fixing, $\tau_1=(4,6,8)$, the argument to refute the adjacent pair $(\tau_1,\tau_2)=((4,6,8),(6,8,8))$ is abbreviated in the first row of the following table:
\begin{center}
\begin{tabular}{l|rcll}
$\tau_2$ & $s-2-K(\tau_2)$ & $\overset?<$ & $E(\tau_2)$ & \\[0.4ex]
\hline
$\overset{\phantom.}(6,8,8)$ & $2/12$ & $\not<$ & $(\eps_3+\eps_4)+\eps_4$ & $<$
 $2/12$ \\
$(6,8,6,6)$ & $9/12$ & $\not<$ & $(\eps_3+\eps_4)+\eps_3+\eps_3$ & $<$ $3/12$
\end{tabular}
\end{center}
The second row displays the analogue argument for another example, namely, the pair $((4,6,8),(6,8,6,6))$, showing that it is infeasible as well.
Both rows will reappear in the table of \cref{sec:468} where we exclude $(4,6,8)$ as a type for 1-vertices entirely.
Note that the terms in the column below $E(\tau_2)$ are grouped by parenthesis to indicate which subsums are upper bounded via \cref{res:K_E_ineq} $(i)$.
In this example, if there are $n$ groups, then the sum is upper bounded by $n/12$.

The placeholders in an adjacent pair $((2k_1,2k_2,2k_3),(2k_i,2k_j,*,...,*))$ can, in theory, be replaced by an arbitrary sequence of even numbers, and each such pair has to be refuted separately.
The following fact will make this task tractable: write $\tau\subset \tau'$ if $\tau$ is a \emph{subtype} of $\tau'$, that is, a vertex type that can be obtained from $\tau'$ by removing some of its entries.
We then can prove

\begin{proposition}\label{res:subtype}
If $(\tau_1,\tau_2)$ is an infeasible adjacent pair, then the pair $(\tau_1,\tau_2')$ is infeasible as well, for every $\tau_2'\supset \tau_2$.
\begin{proof}
Suppose $\tau_2=(2k_1,...,2k_s)$, $\tau_2'=(2k_1,...,2k_s,2k_{s+1},...,2k_{s'})\supset\tau_2$, and that the pair $(\tau_1,\tau_2')$ is \emph{not} infeasible.
Then $\tau_2'$ satisfies \cref{res:K_E_ineq} $(ii)$
%
\begin{align*}
E(\tau_2') &> s'-2-K(\tau_2') \\[-5ex]
\implies\quad E(\tau_2) &> s - 2 - K(\tau_2) + \sum_{i=s+1}^{s'}\!\!\overbrace{\Big(1-\frac1{k_i}-\eps_{k_i}\Big)}^{\alpha_2^{k_i}/\pi > 0} > s - 2 - K(\tau_2).
\end{align*}
But this is exactly the statement that $\tau_2$ satisfies \cref{res:K_E_ineq} $(ii)$ as well, \ie\ that the pair $(\tau_1,\tau_2)$ is also not infeasible.
\end{proof}
\end{proposition}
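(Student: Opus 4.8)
The plan is to argue by contrapositive: assuming the pair $(\tau_1,\tau_2')$ is \emph{not} infeasible, I will show that $(\tau_1,\tau_2)$ is also not infeasible. Recall that "infeasible" means the pair is refuted using only the two inequalities from \cref{res:K_E_ineq}, namely $E(\tau_1) < K(\tau_1) - 1$ (which holds for \emph{every} $1$-vertex, so it is not the issue) and $E(\tau_2) > s - 2 - K(\tau_2)$ (which holds for \emph{every} $2$-vertex). The contradiction in an infeasible pair comes from the fact that these two, combined with the upper bounds $\eps_{k_i} > 0$ wait — combined with the bound on subsums of the $\eps$'s coming from $(i)$, force $E(\tau_2)$ to be simultaneously too small and too large. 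So "$(\tau_1,\tau_2)$ is not infeasible" should be read as: the second inequality for $\tau_2$ is \emph{consistent} with everything else, i.e. $\tau_2$ genuinely could occur. The cleanest formalization, and the one the proof below uses, is: $\tau_2$ not being infeasible is witnessed precisely by the inequality $E(\tau_2) > s - 2 - K(\tau_2)$ being satisfiable/satisfied in the relevant regime.

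The key computation is the relation between the $(ii)$-quantities of $\tau_2$ and of $\tau_2'$. Write $\tau_2 = (2k_1,\dots,2k_s)$ and $\tau_2' = (2k_1,\dots,2k_s,2k_{s+1},\dots,2k_{s'})$, so $\tau_2'$ has $s'$ entries. Then by definition
\begin{align*}
E(\tau_2') - E(\tau_2) &= \sum_{i=s+1}^{s'} \eps_{k_i}, \\
K(\tau_2') - K(\tau_2) &= \sum_{i=s+1}^{s'} \frac{1}{k_i},
\end{align*}
and hence
\[
\bigl(s' - 2 - K(\tau_2')\bigr) - \bigl(s - 2 - K(\tau_2)\bigr) = (s' - s) - \sum_{i=s+1}^{s'}\frac1{k_i} = \sum_{i=s+1}^{s'}\Bigl(1 - \frac1{k_i}\Bigr).
\]
Now if $(\tau_1, \tau_2')$ is not infeasible, then $E(\tau_2') > s' - 2 - K(\tau_2')$. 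Subtracting the two displayed differences,
\[
E(\tau_2) = E(\tau_2') - \sum_{i=s+1}^{s'}\eps_{k_i} > s' - 2 - K(\tau_2') - \sum_{i=s+1}^{s'}\eps_{k_i} = s - 2 - K(\tau_2) + \sum_{i=s+1}^{s'}\Bigl(1 - \frac1{k_i} - \eps_{k_i}\Bigr).
\]
The summand $1 - 1/k_i - \eps_{k_i} = \alpha_2^{k_i}/\pi$ is strictly positive (it is an interior angle divided by $\pi$, and interior angles lie in $(0,\pi)$ by \cref{res:angles}; equivalently $\eps_{k_i} < 1 - 1/k_i$ since $k_i \ge 2$). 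Dropping this positive sum only weakens the inequality, so $E(\tau_2) > s - 2 - K(\tau_2)$, which is exactly the statement that $\tau_2$ satisfies \cref{res:K_E_ineq} $(ii)$, i.e. that $(\tau_1,\tau_2)$ is not infeasible either.

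I do not expect a serious obstacle here; the whole content is the one-line arithmetic identity for $s - 2 - K$ under enlargement of the type, together with the positivity of $\alpha_2^{k_i}/\pi$. The only point requiring a little care is getting the bookkeeping of the "not infeasible" logic right — specifically, making precise that infeasibility is entirely a statement about the single inequality $(ii)$ for $\tau_2$ (given that $(i)$ for $\tau_1$ holds unconditionally), so that the implication "$\tau_2'$ not infeasible $\Rightarrow$ $\tau_2$ not infeasible" really is just the implication between the two versions of inequality $(ii)$. Once that is said, the estimate above closes the argument.
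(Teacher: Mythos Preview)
Your proposal is correct and follows essentially the same approach as the paper: both argue by contrapositive, assume $\tau_2'$ satisfies \cref{res:K_E_ineq} $(ii)$, and then rearrange to obtain the corresponding inequality for $\tau_2$ by peeling off the extra summands $1 - 1/k_i - \eps_{k_i} = \alpha_2^{k_i}/\pi > 0$. Your version simply spells out the intermediate arithmetic (the differences $E(\tau_2')-E(\tau_2)$ and $K(\tau_2')-K(\tau_2)$) more explicitly than the paper does.
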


By \cref{res:subtype} it is sufficient to exclude so-called \emph{minimal infeasible adjacent pairs}, that is, infeasible adjacent pairs $(\tau_1,\tau_2)$ for which $(\tau_1,\tau_2')$ is not infeasible for any $\tau_2'\subset\tau_2$.

A second potential problem is, that we know little about the values that might replace the placeholders in $\tau_2=(2k_i,2k_j,*,...,*)$.
For our immediate goal, dealing with the following special case is sufficient:

\begin{proposition}\label{res:exclusive_types}
The placeholders in an adjacent pair $((4,6,2k'),(6,2k',*,...,*))$ can only contain $4$, $6$ and $2k'$.
\begin{proof}
Suppose that $P$ contains an adjacent pair $$(\tau_1,\tau_2)=((4,6,2k'),(6,2k',2k,*,...,*))$$ induced by a 1-vertex $v\in V_1$ of type $\tau_1$ with neighbor $w\in V_2$ of type $\tau_2$.
Suppose further that $2k\not\in\{4,6,2k'\}$.
The vertex $w$ is then incident to a $2k$-face, and therefore also to a 1-vertex $u\in V_1$ of type $(4,4,2k)$ ($u$ cannot be of type $(4,6,2k)$ because of $k\not=k'$ and \cref{res:incompatible_1_types}).
This configuration is depicted below:
\begin{center}
\includegraphics[width=0.34\textwidth]{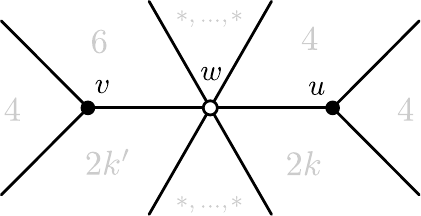}
\end{center}
Note that $w$ is also incident to a $4$-face, and thus $(6,2k',2k,4)\subseteq \tau_2$.

By \cref{res:K_E_ineq} $(i)$ the existence of 1-vertices of type $(4,4,2k)$ and $(4,6,2k')$ yields inequalities
\begin{equation}
\label{eq:two_inequalities}
\eps_2+\eps_2+\eps_k<\frac1k \quad\text{and}\quad\eps_2+\eps_3+\eps_{k'}<\frac1{k'}-\frac16.
\end{equation}
%
Since $\tau_2$ has $\tau:=(6,2k',2k,4)$ as a subtype, by \cref{res:subtype} it suffices to show that the pair $((4,6,2k'),(6,2k',2k,4))$ is infeasible.
This follows via \cref{res:K_E_ineq} $(ii)$:
\begin{align*}
\frac76-\frac1k-\frac1{k'}
=\;&4-2-K(\tau)\\\overset{(ii)}< \;&E(\tau) = \underbrace{\eps_2+\eps_3+\eps_{k'}}_{<1/k'-1/6}+\underbrace{\eps_k}_{\mathclap{<1/k}} \!\!\overset{\eqref{eq:two_inequalities}}< \frac1k+\frac1{k'}-\frac16,
\end{align*}
which rearranges to $1/k+1/{k'}>2/3$.
Recalling  $2k'\in\{6,8,10\}\implies k'\ge 3$ (from \cref{res:incompatible_1_types}) and $2k\not\in\{4,6,2k'\}\implies k\ge 4$ shows that this is not possible.
\end{proof}
\end{proposition}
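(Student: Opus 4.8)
The plan is to argue by contradiction. Suppose some placeholder in $\tau_2$ equals $2k$ with $2k\notin\{4,6,2k'\}$, and derive a contradiction from the inequalities of \cref{res:K_E_ineq}. So we have a $1$-vertex $v\in V_1$ of type $\tau_1=(4,6,2k')$ with a neighbor $w\in V_2$ whose type $\tau_2$ contains the entries $6$, $2k'$ and $2k$.

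\emph{Step 1: locate a second type of $1$-vertex.} The vertex $w$ is incident to a $2k$-face $\sigma$. Along the boundary cycle of the bipartite polygon $\sigma$ the vertices alternate between $V_1$ and $V_2$, so both neighbors of $w$ on $\sigma$ lie in $V_1$; fix such a $1$-vertex $u$. Its type contains $2k$, and since $k\neq k'$, \cref{res:incompatible_1_types} forces $u$ to be of type $(4,4,2k)$ (it cannot be of type $(4,6,2k)$). As $u$ has type $(4,4,2k)$, the edge of $\sigma$ joining $u$ to $w$ also bounds a $4$-face; hence $w$ is incident to a $4$-face in addition to a $6$-face, the $2k'$-face, and $\sigma$. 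Therefore $(6,2k',2k,4)$ is a subtype of $\tau_2$.

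\emph{Step 2: collect and combine the $K$/$E$ inequalities.} Applying \cref{res:K_E_ineq} $(i)$ to the two $1$-vertex types $(4,4,2k)$ and $(4,6,2k')$ gives
$$\eps_2+\eps_2+\eps_k<\tfrac1k\quad\text{and}\quad\eps_2+\eps_3+\eps_{k'}<\tfrac1{k'}-\tfrac16.$$
By \cref{res:subtype} it suffices to show the adjacent pair $((4,6,2k'),(6,2k',2k,4))$ is infeasible. Writing $\tau:=(6,2k',2k,4)$, we have $K(\tau)=\tfrac13+\tfrac1{k'}+\tfrac1k+\tfrac12$, and \cref{res:K_E_ineq} $(ii)$ gives $E(\tau)>4-2-K(\tau)=\tfrac76-\tfrac1k-\tfrac1{k'}$. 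On the other hand, grouping $E(\tau)=(\eps_2+\eps_3+\eps_{k'})+\eps_k$ and inserting the two bounds above yields $E(\tau)<(\tfrac1{k'}-\tfrac16)+\tfrac1k$. Comparing the two estimates for $E(\tau)$ and rearranging gives $\tfrac1k+\tfrac1{k'}>\tfrac23$. But by \cref{res:incompatible_1_types} we have $2k'\in\{6,8,10\}$, so $k'\ge 3$, and $2k\notin\{4,6,2k'\}$ forces $k\ge 4$; hence $\tfrac1k+\tfrac1{k'}\le\tfrac14+\tfrac13=\tfrac7{12}<\tfrac23$, a contradiction.

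The step I expect to cost the most care is Step 1 — the claim $(6,2k',2k,4)\subseteq\tau_2$. This needs the observation that every $1$-vertex on the $2k$-face $\sigma$ must be of type $(4,4,2k)$ (using $k\neq k'$ with \cref{res:incompatible_1_types}), together with the local fact that at such a $1$-vertex the $2k$-face is flanked on each side by a $4$-face, so that the edge from that $1$-vertex to $w$ forces $w$ onto a $4$-face. Once this containment is secured, Step 2 is exactly the bookkeeping with $K$ and $E$ already rehearsed in \cref{ex:infeasible}.
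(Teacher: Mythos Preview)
Your proof is correct and follows essentially the same route as the paper's own argument: locate a $1$-vertex $u$ of type $(4,4,2k)$ on the $2k$-face through $w$ (using \cref{res:incompatible_1_types} and $k\neq k'$), deduce that the edge $uw$ forces a $4$-face at $w$ so that $(6,2k',2k,4)\subseteq\tau_2$, and then run the $K/E$ infeasibility computation exactly as in \cref{ex:infeasible} to reach $1/k+1/k'>2/3$, which is impossible for $k'\ge 3$ and $k\ge 4$. If anything, your Step~1 is more explicit than the paper's version, which leans on a figure for the observation that the edge $uw$ is a $(4,2k)$-edge.
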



\subsection{The case $\tau_1=(4,6,10)$}
\label{sec:4610}

If $P$ contains a 1-vertex of type $(4,6,10)$, then it contains an adjacent pair of the form
$$(\tau_1,\tau_2)=((4,6,10),(6,10,*,...,*)).$$
We proceed as demonstrated in \cref{ex:infeasible}.
\cref{res:K_E_ineq} $(i)$ yields $\eps_2+\eps_3+\eps_5 < 1/30$.
By \cref{res:exclusive_types} the placeholders can only take on values 4, 6 or $10$.
The following table lists the minimally infeasible adjacent pairs and proves their infeasibility.
\begin{center}
\begin{tabular}{l|rcll}
$\tau_2$ & $s-2-K(\tau_2)$ &$\overset?<$&  $E(\tau_2)$ & \\[0.4ex]
\hline
$\overset{\phantom.}(6,10,6)$ & $\phantom04/30$ &$\not<$& $(\eps_3+\eps_5)+\eps_3$ & $< 2/30$ \\
$(6,10,10)$ & $\phantom08/30$ &$\not<$& $(\eps_3+\eps_5)+\eps_5$ & $< 2/30$ \\
$(6,10,4,4)$ & $14/30$ &$\not<$& $(\eps_2+\eps_3+\eps_5)+\eps_2$ & $< 2/30$ \\
\end{tabular}
\end{center}
%
By \cref{res:subtype} we conclude: the placeholder in $\tau_2=(6,10,*,...,*)$ can contain no 6 or 10, and at most one 4.
This leaves us with the option $\tau_2=(4,6,10)$, which is the same as $\tau_1$ and therefore not possible by \cref{res:1_2_vertex_same_type}.
Therefore, $P$ cannot contain a 1-vertex of type $(4,6,10)$.

\subsection{The case $\tau_1=(4,6,8)$}
\label{sec:468}

If $P$ contains a 1-vertex of type $(4,6,8)$, then it also contains an adjacent pair of the form
$$(\tau_1,\tau_2)=((4,6,8),(6,8,*,...,*)).$$
We proceed as demonstrated in \cref{ex:infeasible}.
\cref{res:K_E_ineq} $(i)$ yields $\eps_2+\eps_3+\eps_4 < 1/12$.
By \cref{res:exclusive_types} the placeholders can only take on values 4, 6 or 8.
The following table lists the minimally infeasible adjacent pairs and proves their infeasibility.
\begin{center}
\begin{tabular}{l|rcll}
$\tau_2$ & $s-2-K(\tau_2)$ &$\overset?<$&  $E(\tau_2)$ & \\[0.4ex]
\hline
$\overset{\phantom.}(6,8,8)$ & $2/12$ & $\not<$ & $(\eps_3+\eps_4)+\eps_3$ & $< 2/12$ \\
$(6,8,4,4)$ & $5/12$ &$\not<$& $(\eps_2+\eps_3+\eps_4)+\eps_2$ & $< 2/12$ \\
$(6,8,4,6)$ & $7/12$ &$\not<$& $(\eps_2+\eps_3+\eps_4)+\eps_3$ & $< 2/12$ \\
$(6,8,6,6)$ & $9/12$ &$\not<$& $(\eps_2+\eps_3+\eps_4)+\eps_3 + \eps_3$ & $< 3/12$ 
\end{tabular}
\end{center}
%
By \cref{res:subtype} we conclude: the placeholder in $\tau_2=(6,8,*,...,*)$ can contain no 8, and at most one 4 or 6, but not both at the same time.

This leaves us with the options $\tau_2=(4,6,8)$ and $\tau_2=(6,6,8)$.
In the first case, $\tau_1=\tau_2$ which not possible by \cref{res:1_2_vertex_same_type}.
In the second case, there would be two adjacent 6-faces, but $P$ does not contain $(6,6)$-edges by \cref{res:edge_types} with $2k'=8$.
Therefore, $P$~cannot contain a 1-vertex of type $(4,6,8)$.

\subsection{The case $\tau_1=(4,6,6)$}
\label{sec:466}

If $P$ contains a 1-vertex of type $(4,6,6)$, then it also contains an adjacent pair of the form
$$(\tau_1,\tau_2)=((4,6,6),(6,6,*,...,*)).$$
We proceed as demonstrated in \cref{ex:infeasible}.
\cref{res:K_E_ineq} $(i)$ yields $\eps_2+\eps_3+\eps_3 < 1/6$.
By \cref{res:exclusive_types} the placeholders can only take on values 4 or 6.
The following table lists the minimally infeasible adjacent pairs and proves their infeasibility.
\begin{center}
\begin{tabular}{l|rcll}
$\tau_2$ & $s-2-K(\tau_2)$ &$\overset?<$&  $E(\tau_2)$ & \\[0.4ex]
\hline
$\overset{\phantom.}(6,6,4,4)$ & $2/6$ & $\not<$ & $(\eps_2+\eps_3+\eps_3)+\eps_2$ & $<$ $2/6$ \\
$(6,6,6,4)$ & $3/6$ & $\not<$ & $(\eps_2+\eps_3+\eps_3)+\eps_3$ & $<$ $2/6$ \\
$(6,6,6,6)$ & $4/6$ & $\not<$ & $(\eps_3+\eps_3)+(\eps_3+\eps_3)$ & $<$ $2/6$
\end{tabular}
\end{center}
%
By \cref{res:subtype} we conclude: the placeholder in $\tau_2=(6,6,*,...,*)$ can contain at most one 4 or 6, but not both at the same time.

This leaves us with the options $\tau_2=(4,6,6)$ and $\tau_2=(6,6,6)$.
In the first case we have $\tau_1=\tau_2$, which is not possible by \cref{res:1_2_vertex_same_type}.
Excluding $(6,6,6)$ needs more work:
%
%
fix a 6-gon $\sigma\in\F_2(P)$.
Each edge of $\sigma$ is either of type $(4,6)$ or of type $(6,6)$ (by \cref{res:edge_types}).
Each 1-vertex of $\sigma$ (which must be of type $(4,6,6)$) is then incident to exactly one of these $(6,6)$-edges of $\sigma$.
Thus, there are exactly \emph{three} $(6,6)$-edges incident to $\sigma$ (see \cref{fig:hexagon_fail}).
On the other hand, each 2-vertex of $\sigma$ is incident to an even number of $(6,6)$-edges of $\sigma$ (since if a 2-vertex is incident to at least one $(6,6)$-edge, then we have previously shown that its type must be $(6,6,6)$, implying another incident $(6,6)$-edge). Therefore the number of $(6,6)$-edges incident to $\sigma$ must be \emph{even} (see \cref{fig:hexagon_fail}), in contradiction to the previously obtained number three of such edges.

\begin{figure}
\centering
\includegraphics[width=0.55\textwidth]{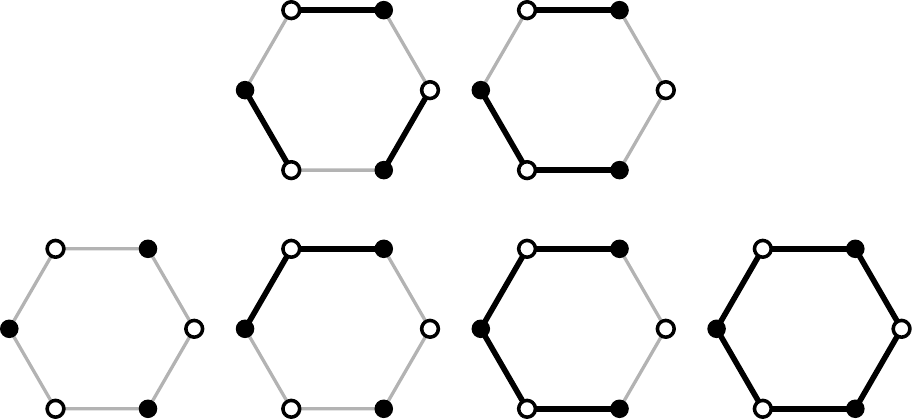}
\caption{Possible distributions of $(4,6)$-edges (gray) and $(6,6)$-edges (thick) around a 6-gon as discussed in \cref{sec:466}. The top row shows configurations compatible with the conditions set by 1-vertices (black), and the bottom row shows the configurations compatible with the conditions set by the 2-vertices (white).}
\label{fig:hexagon_fail}
\end{figure}

Consequently, $P$~cannot contain a 1-vertex of type $(4,6,6)$.

\begin{observation}
It is a consequence of \cref{sec:466,sec:468,sec:4610} that $P$ cannot have a 1-vertex of a type $(4,6,2k')$ for a $2k'\in\{6,8,10\}$.
By \cref{res:incompatible_1_types} this means that \emph{all} 1-vertices of $P$ are of the same type $\tau_1=(4,4,2k)$ for some fixed $2k\ge 4$.
\end{observation}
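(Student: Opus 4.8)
The plan is to assemble this statement from results already in place, since all of the substantial combinatorial work has been done in the three preceding subsections. First I would recall from \cref{res:1_types} (equivalently, from \cref{tab:1_types}) that every \emph{possible} type of a $1$-vertex of $P$ is either of the form $(4,4,2k)$ for some $2k\ge 4$, or one of the three sporadic types $(4,6,6)$, $(4,6,8)$, $(4,6,10)$; there are no further candidates. The arguments of \cref{sec:4610,sec:468,sec:466} showed, respectively, that none of these three sporadic types can occur as the type of a $1$-vertex of a strictly bipartite polyhedron. Combining these exclusions with the exhaustiveness of \cref{tab:1_types}, every $1$-vertex of $P$ must have a type of the form $(4,4,2k)$, where a priori the value of $k$ could still depend on the chosen $1$-vertex.

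The remaining step is to promote this to a single type shared by all $1$-vertices. For this I would invoke \cref{res:incompatible_1_types}, which asserts that $P$ contains at most two distinct types of $1$-vertices, and that if it contains two, then one of them is of the form $(4,4,2k)$ and the other of the form $(4,6,2k')$ with $2k'\in\{6,8,10\}$. Since we have just shown that $P$ has no $1$-vertex of a type of the form $(4,6,2k')$, the ``two types'' alternative is impossible; hence $P$ has exactly one type of $1$-vertex. That type is therefore $\tau_1=(4,4,2k)$ for one fixed $2k\ge 4$, which is the assertion.

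I do not expect a genuine obstacle here: the statement is essentially a bookkeeping corollary, and its only content is that the list in \cref{tab:1_types} is complete (so that $(4,4,\cdot)$ and $(4,6,\cdot)$ are the only families) and that \cref{res:incompatible_1_types} already packages the needed uniqueness. The one point worth verifying explicitly is that the exclusions in \cref{sec:4610,sec:468,sec:466} are exclusions of these types \emph{as types of $1$-vertices} (and not, say, of $2$-vertices); once that is confirmed, the argument above is immediate.
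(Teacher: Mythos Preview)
Your proposal is correct and matches the paper's reasoning exactly: the observation in the paper is stated without a separate proof, and its justification is precisely the two-step argument you give (exhaustiveness of \cref{tab:1_types}, exclusion of the $(4,6,2k')$ types via \cref{sec:4610,sec:468,sec:466}, then uniqueness via \cref{res:incompatible_1_types}). Your added remark that the exclusions in those sections are indeed exclusions of types of $1$-vertices is accurate and worth noting.
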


It is worth to distinguish the case $(4,4,4)$ from the cases $(4,4,2k)$ with $2k\ge 6$.

\subsection{The case $\tau_1=(4,4,4)$}
\label{sec:444}

In this case, all 2-faces are 4-gons, and all 4-gons are congruent by \cref{res:unique_shape}.
A 4-gon with all edges of the same length is known as a \emph{rhombus}, and the polyhedra with congruent rhombic faces are known as \emph{rhombic isohedra} (from german \emph{Rhombenisoeder}).
These have a known classification:

\begin{theorem}[S. Bilinksi, 1960 \cite{bilinski1960rhombic}]\label{res:rhombic_isohedra}
If $P$ is a polyhedron with congruent rhombic faces, then $P$ is one of the following:
\begin{myenumerate}
	\item a member of the infinite family of rhombic hexahedra, \ie\ $P$ can be obtained from a cube by stretching or squeezing it along a long diagonal,
	\item the rhombic dodecahedron,
	\item the Bilinski dodecahedron,
	\item the rhombic icosahedron, or
	\item the rhombic triacontahedron.
\end{myenumerate}
\end{theorem}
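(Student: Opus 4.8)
The plan is to turn the classification into a finite combinatorial problem via Euler's formula and Cauchy's rigidity theorem, and then to carry out the (substantial) case analysis driven by the angle budget at each vertex. Normalise the edge length to $1$ and let $\theta\in(0,\pi/2]$ be the acute angle of the common rhombic face. If $\theta=\pi/2$ the faces are unit squares and $P$ is the cube, the degenerate member of family \itm1; so assume $\theta<\pi/2$, in which case each face has exactly two \emph{acute corners} (interior angle $\theta$) and two \emph{obtuse corners} (interior angle $\pi-\theta$), alternating around its boundary --- so the two corners at the endpoints of an edge of a face are always of opposite kind. Since every face is a quadrilateral we have $2E=4F$, hence Euler's formula gives $E=2F$ and $V=F+2$; counting the two acute, resp.\ obtuse, corners of each face gives $\sum_v p_v=\sum_v q_v=2F=2V-4$, where $p_v,q_v$ denote the number of acute, resp.\ obtuse, corners meeting at a vertex $v$. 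Finally, by Cauchy's rigidity theorem a convex polyhedron is determined up to congruence by its combinatorial type together with the metric of each of its faces; here that datum reduces to the combinatorial type, the assignment of a kind (acute/obtuse) to every corner, and the single parameter $\theta$. It therefore suffices to enumerate these data.

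The first genuine step is to classify the admissible vertex types. Convexity forces $p_v\theta+q_v(\pi-\theta)<2\pi$ while $\deg v=p_v+q_v\ge3$, and a short computation leaves exactly four possibilities: \textbf{(A)} $q_v=0$, $p_v\ge3$ (requiring $\theta<2\pi/p_v$); \textbf{(B)} $q_v=1$, $p_v\ge2$ (requiring $(p_v-1)\theta<\pi$); \textbf{(C)} $(p_v,q_v)=(1,2)$, of degree $3$; and \textbf{(D)} $(p_v,q_v)=(0,3)$, of degree $3$ and requiring $\theta>\pi/3$. Writing $n_A,n_B,n_C,n_D$ for the numbers of vertices of each type, the identity $\sum_v q_v=2V-4$ collapses to $n_D=2n_A+n_B-4$, and $\sum_v p_v=2V-4$ with the bounds $p_v\ge3$ on type (A) and $p_v\ge2$ on type (B) gives $n_C\ge12-3n_A-2n_B$. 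The crucial dichotomy is whether a fully-obtuse vertex is present: if $n_D\ge1$ then $\theta>\pi/3$, which forces every type-(A) or type-(B) vertex to have degree at most $5$, so all degrees are $\le5$; and if $n_D=0$ then $2n_A+n_B=4$, which already pins the structure down tightly. In either case these relations, together with the fact that the face complex is a quadrangulation of the sphere, leave only a short list of candidate degree sequences.

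For each candidate I would use the alternation of corner kinds around the faces to propagate the acute/obtuse colouring across the whole polyhedron, discarding every colouring that cannot be made globally consistent or whose (Cauchy-)rigid realisation would be non-convex or degenerate. For each surviving combinatorial-and-colouring datum, Cauchy rigidity then reduces the realisation problem to solving for $\theta$: the parallelepiped combinatorics carries a consistent colouring for every $\theta\in(0,\pi/2]$, giving the one-parameter family \itm1 of rhombic hexahedra (the acute and obtuse rhombohedra, with the cube at $\theta=\pi/2$); the combinatorial type of the rhombic dodecahedron admits exactly two inequivalent consistent colourings, one forcing the rhombus with $\tan(\theta/2)=1/\sqrt2$ (the rhombic dodecahedron \itm2) and one forcing the golden rhombus (the Bilinski dodecahedron \itm3); and the only further rigid possibilities turn out to be the rhombic icosahedron \itm4 and the rhombic triacontahedron \itm5, each again forcing the golden rhombus. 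This exhausts the list. (An alternative, more structural route: since every face is a rhombus, hence centrally symmetric, $P$ is a zonohedron, so $P=\sum_i[0,v_i]$ with the $v_i$ unit vectors spanning $\RR^3$ and pairwise at angle $\theta$ or $\pi-\theta$ --- that is, representing an equiangular line system --- and one invokes the classical bound of at most $6$ equiangular lines in $\RR^3$ together with an enumeration of the small line systems; this still requires comparable bookkeeping.)

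I expect the main obstacle to be precisely this middle stage. The arithmetic of vertex types narrows the field considerably but does not by itself produce the \emph{complete} list of combinatorial types, and for each combinatorial type one still has to decide exactly which alternating colourings admit a convex realisation. Ruling out the spurious candidates --- degree sequences admitting no consistent colouring, colourings whose rigid realisation is not convex, or that force two incompatible values of $\theta$ --- is where the real work lies; Cauchy's theorem guarantees the bookkeeping is finite but does not carry it out, and organising the resulting case analysis is the heart of the proof (this is essentially what Bilinski's 1960 argument does).
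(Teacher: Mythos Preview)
The paper does not give its own proof of this statement: Theorem~3.8 is quoted as a known classification result of Bilinski (1960) and used as a black box to deduce Corollary~3.9. There is therefore nothing in the paper to compare your argument against.

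Your proposal is a reasonable high-level sketch of how such a classification can be organised --- Euler's formula plus the angle budget to constrain vertex types, alternation of acute/obtuse corners to propagate a colouring, and Cauchy rigidity to make the enumeration finite --- and you are candid that the real content lies in the case analysis you do not carry out. The alternative zonohedron/equiangular-lines route you mention is in fact the cleaner modern way to see this (a polyhedron with centrally symmetric faces is a zonotope, and congruent rhombic faces force the generating vectors to be unit vectors at a common mutual angle, i.e.\ an equiangular line system in $\RR^3$, of which there are at most six lines); that approach makes the finiteness immediate and reduces the case analysis to enumerating subsets of the six diagonals of the icosahedron together with the small sporadic systems. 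Either way, what you have written is an outline rather than a proof, which you acknowledge; since the paper itself defers entirely to the literature, that is already more than the paper provides.
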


The figure below depicts these polyhedra in the given order (from left to right; including only one instance from the family $(i)$):
\begin{center}
\includegraphics[width=0.9\textwidth]{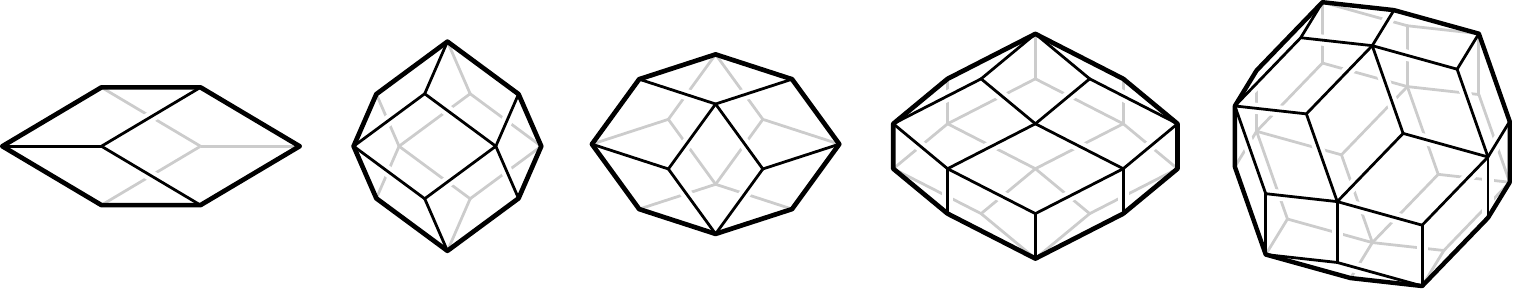}
\end{center}
The rhombic dodecahedron and triacontahedron are known edge- but not vertex-transitive polytopes.
We show that the others are not even strictly bipartite.

\begin{corollary}\label{res:strictly_bipartite_444}
If $P$ is strictly bipartite with all 1-vertices of type $(4,4,4)$, then~$P$ is one of the following:
\begin{myenumerate}
	\item the rhombic dodecahedron,
	\item the rhombic triacontahedron.
\end{myenumerate}
\begin{proof}
The listed ones are edge-transitive but not vertex-transitive.
Also they are not inscribed.
By \cref{res:trans_is_bipartite} they are therefore \emph{strictly} bipartite.

We then have to exclude the other polyhedra listed in \cref{res:rhombic_isohedra}.
The rhombic hexahedra include the cube, which is inscribed, hence not strictly bipartite.
In all the other cases, there exist vertices where acute and obtuse angles meet (see the figure).
So this vertex cannot be assigned to either $V_1$ or $V_2$ (\cf\ \cref{res:2k_4_case}), and the polyhedron cannot be bipartite.
\end{proof}
\end{corollary}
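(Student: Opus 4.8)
The plan is to combine the structural conclusion that \emph{every} face of $P$ is a rhombus with Bilinski's classification of rhombic isohedra (\cref{res:rhombic_isohedra}), and then to discard every entry of that list except the rhombic dodecahedron and the rhombic triacontahedron using the angle restriction from \cref{res:2k_4_case}.

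First I would argue that every face of $P$ is a $4$-gon. Since the edge-graph of $P$ is bipartite, every edge of $P$ is incident to a $1$-vertex, and hence every face of $P$ is incident to a $1$-vertex. As all $1$-vertices have type $(4,4,4)$, each face incident to such a vertex is a $4$-face; therefore all faces of $P$ are $4$-gons. A $4$-gon with all edges of the same length $\ell$ is a rhombus, and by \cref{res:unique_shape} any two faces of $P$ (all of type $4$) are congruent. So $P$ is a polyhedron all of whose faces are congruent rhombi, i.e. a rhombic isohedron, and \cref{res:rhombic_isohedra} applies: $P$ is a rhombic hexahedron, the rhombic dodecahedron, the Bilinski dodecahedron, the rhombic icosahedron, or the rhombic triacontahedron.

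It then remains to keep the two desired cases and exclude the rest. The rhombic dodecahedron and the rhombic triacontahedron are edge- but not vertex-transitive and are not inscribed, so by \cref{res:trans_is_bipartite} they are (strictly) bipartite and genuinely occur. For the other cases, recall from \cref{res:2k_4_case} --- together with the strictness in \cref{res:angles} coming from $r_1<r_2$ --- that in a strictly bipartite polyhedron the interior angle $\alpha_1$ of a $4$-face at a $1$-vertex is strictly obtuse, while the interior angle $\alpha_2$ at a $2$-vertex is strictly acute. Hence a vertex at which an obtuse rhombus angle occurs must lie in $V_1$, and a vertex at which an acute rhombus angle occurs must lie in $V_2$; a vertex incident to both an acute and an obtuse face angle could be assigned to neither class, contradicting that $P$ is bipartite. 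Among rhombic hexahedra, the cube is inscribed, hence not strictly bipartite; and every other rhombic hexahedron, as well as the Bilinski dodecahedron and the rhombic icosahedron, possesses a vertex where acute and obtuse face angles meet, so none of these is bipartite. This leaves exactly the rhombic dodecahedron and the rhombic triacontahedron.

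The only step needing a little care --- and essentially the sole obstacle --- is the verification that each excluded polyhedron (non-cube rhombic hexahedron, Bilinski dodecahedron, rhombic icosahedron) really does have a mixed acute/obtuse vertex. This is a finite, largely pictorial check against the explicit list in \cref{res:rhombic_isohedra}; for the rhombic hexahedra it follows from the fact that such a polyhedron is a parallelepiped with a pair of antipodal ``pure'' vertices (three equal angles), which forces the remaining six vertices to be mixed unless all six rhombi are squares, i.e. unless $P$ is the cube. Everything else is immediate from results already established.
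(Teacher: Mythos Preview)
Your proof is correct and follows essentially the same route as the paper: reduce to Bilinski's list of rhombic isohedra, keep the rhombic dodecahedron and triacontahedron via \cref{res:trans_is_bipartite}, and discard the cube as inscribed and the remaining cases via the mixed acute/obtuse vertex argument from \cref{res:2k_4_case}. If anything, you are slightly more explicit than the paper in two places --- you spell out why all faces must be congruent rhombi (the paper states this in the section preamble rather than in the proof), and you give an actual argument for why every non-cube rhombic hexahedron has a mixed vertex, which the paper leaves to a picture.
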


These are the only strictly bipartite polyhedra we will find, and both are edge-transitive without being vertex-transitive.

\subsection{The case $\tau_1=(4,4,2k),2k\ge 6$}
\label{sec:442k}

If $P$ contains a 1-vertex of type $(4,4,2k)$ with $2k\ge 6$, then it also has an adjacent pair of the form
$$(\tau_1,\tau_2)=((4,4,2k),(4,2k,*,...,*)).$$
We proceed as demonstrated in \cref{ex:infeasible}.
\cref{res:K_E_ineq} $(i)$ yields $\eps_2+\eps_2+\eps_k < 1/k$.
Since $(4,4,2k)$ is the only type of 1-vertex of $P$, there are only 4-faces and $2k$-faces and~the placeholders can only take on the values 4 and $2k$ (note that we do \emph{not} use \cref{res:exclusive_types} for this).
The following table lists some inequalities derived for infeasible pairs:
\begin{center}
\begin{tabular}{l|rcll}
$\tau_2$ & $s-2-K(\tau_2)$ &$\overset?<$&  $E(\tau_2)$ & \\[0.4ex]
\hline
$\overset{\phantom.}(4,2k,4,4,4)$ & $1-1/k$ & $<$ & $(\eps_2+\eps_2+\eps_k)+(\eps_2+\eps_2)$ & $< 2/k$ \\
$(4,2k,4,4,2k)$ & $3/2-2/k$ &$<$& $(\eps_2+\eps_2+\eps_k)+(\eps_2+\eps_k)$ & $< 2/k$
\end{tabular}
\end{center}
One checks that these inequalities are not satisfied for $2k\ge 6$.
\Cref{res:subtype} then states that the placeholders can contain at most two 4-s, and if exactly two, then nothing else. 
Moreover, $\tau_2$ must contain at least as many 4-s as it con\-tains $2k$-s, as otherwise we would find two adjacent $2k$-faces while $P$ cannot contain a $(2k,2k)$-edge by \cref{res:edge_types}.
We are therefore left with the following options for $\tau_2$:
$$(4,4,2k),\; (4,4,4,2k) \quad\text{and}\quad (4,2k,4,2k).$$

The case $\tau_2=(4,4,2k)$ is impossible by \cref{res:1_2_vertex_same_type}.
We show that $\tau_2=(4,4,4,2k)$~is also not possible:
consider the local neighborhood of a $(4,4,4,2k)$-vertex (the highlighted~ver\-tex) in the following figure:
\begin{center}
\includegraphics[width=0.25\textwidth]{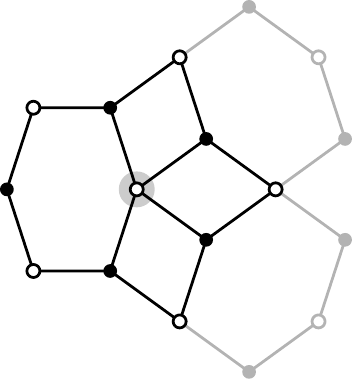}
\end{center}
Since the 1-vertices (black dots) are of type $(4,4,6)$, this configuration forces on us the~existence of the two gray 6-faces.
These two faces intersect in a 2-vertex, which is then incident to two $2k$-faces and must be of type $(4,2k,4,2k)$.
But we can show that the types $(4,4,4,2k)$ and $(4,2k,4,2k)$ are incompatible by \cref{res:spherical_interior_angles_are_good} $(i)$:
$$\beta_2^2+\beta_2^2+\beta_2^2+\beta_2^k \overset{(i)}= \beta_2^2+\beta_2^k+\beta_2^2+\beta_2^k \;\implies\;  \beta_2^2=\beta_2^k \;\overset{\eqref{eq:angle_relations}}\implies\; 4=2k\ge 6.$$
Thus, $(4,4,4,2k)$ cannot occur.

We conclude that every 2-vertex incident to a $2k$-face must be of type $(4,2k,4,2k)$.
%
Consider then the following table:
\begin{center}
\begin{tabular}{l|rcll}
$\tau_2$ & $s-2-K(\tau_2)$ &$\overset?<$&  $E(\tau_2)$ & \\[0.4ex]
\hline
$\overset{\phantom.}(4,2k,4,2k)$ & $1-2/k$ &$<$& $(\eps_2+\eps_2+\eps_k)+\eps_2$ & $< 2/k$
\end{tabular}
\end{center}
The established inequality yields $2k\le 6$, and hence $2k=6$.
We found that then all 1-vertices must be of type $(4,4,6)$, and all 2-vertices incident to a 6-face must be of type $(4,6,4,6)$.

%
%

\subsection{The case $\tau_1=(4,4,6)$}
\label{sec:446}

At this point we can now assume that all 1-vertices of $P$ are of type $(4,4,6)$ and that each 2-vertex of $P$ that is incident to a 6-face is of type $(4,6,4,6)$.
In particular, $P$ contains a 2-vertex $w\in V_2$ of this type.
Since there is no $(6,6)$-edge in $P$, the two 6-faces incident to $w$ cannot be adjacent.
In other words, the faces around $w$ must occur alternatingly of type 4 and type 6, which is the reason for writing the type $(4,6,4,6)$ with alternating entries.

On the other hand, $P$ contains $(4,4)$-edges, and none of these is incident to a $(4,6,4,6)$-vertex surrounded by alternating faces.
Thus, there must be further 2-vertices of a type other than $(4,6,4,6)$, necessarily \emph{not} incident to any 6-face.
These must then be of type
$$(4^r):=(\underbrace{4,...,4}_{r}),\quad\text{for some $r\ge 3$}.$$

\begin{proposition}
$r=5$.
\begin{proof}
If there is a $(4^r)$-vertex, \cref{res:spherical_interior_angles_are_good} $(i)$ yields $\beta_2^2=2\pi/r$.
Analogously,~from~the existence of a $(4,6,4,6)$-vertex follows
$$2\beta_2^2+2\beta_2^3\overset{(i)}=2\pi\quad\implies\quad \beta_2^3=\frac{2\pi-2\beta_2^2}2 = \Big(1-\frac2r\Big)\pi.$$
\begin{samepage}%
Recall $k\beta_1^k+k\beta_2^k>2\pi(k-1)$ from \cref{res:spherical_interior_angles_are_good} $(ii)$.
Together with the previously established values for $\beta_2^2$ and $\beta_2^3$, this yields 
\begin{equation}
\label{eq:4}
\begin{array}{rcl}
\beta_1^2 &\!\!\!>\!\!\!& \displaystyle \frac{2\pi(2-1)-2\beta_2^2}2= \Big(1-\frac2r\Big)\pi, \quad\text{and} \\[1ex]
\beta_1^3 &\!\!\!>\!\!\!& \displaystyle \frac{2\pi(3-1)-3\beta_2^3}3= \Big(\frac13+\frac2r\Big)\pi.
\end{array}
\end{equation}
%
Since the 1-vertices are of type $(4,4,6)$, \cref{res:spherical_interior_angles_are_good} $(i)$ yields
\end{samepage}
$$2\pi\overset{(i)}=2\beta_1^2+\beta_1^3 \overset{\eqref{eq:4}}> 2\Big(1-\frac2r\Big)\pi + \Big(\frac13+\frac2r\Big)\pi = \Big(\frac73-\frac2r\Big)\pi.$$
And one checks that this rearranges to $r<6$. 

This leaves us with the options $r\in\{3,4,5\}$.
If $r=4$, then $\beta_2^3=\pi/2=\beta_2^2$, which~is~impossible by equation \eqref{eq:angle_relations}.
And if $r=3$, then \eqref{eq:4} yields $\beta_1^3 > \pi$, which is also impossible~for a convex face of a spherical polyhedron.
We are left with $r=5$.
\end{proof}
\end{proposition}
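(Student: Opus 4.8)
The plan is to squeeze $r$ between a lower and an upper bound using only \cref{res:spherical_interior_angles_are_good}, evaluated at the three vertex types now known to occur in $P$: a $(4^r)$-vertex, a $(4,6,4,6)$-vertex, and a $1$-vertex (of type $(4,4,6)$). First I would read off the spherical angles $\beta_2^2$ and $\beta_2^3$. Part $(i)$ of \cref{res:spherical_interior_angles_are_good} applied at the $(4^r)$-vertex gives $r\,\beta_2^2 = 2\pi$, so $\beta_2^2 = 2\pi/r$; applied at the $(4,6,4,6)$-vertex it gives $2\beta_2^2 + 2\beta_2^3 = 2\pi$, hence $\beta_2^3 = \pi - \beta_2^2 = (1 - 2/r)\pi$.

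Next I would convert these into lower bounds on $\beta_1^2$ and $\beta_1^3$ via the spherical-excess inequality of part $(ii)$, namely $k\beta_1^k + k\beta_2^k > 2(k-1)\pi$. With $k=2$ this gives $\beta_1^2 > \pi - \beta_2^2 = (1-2/r)\pi$, and with $k=3$ it gives $\beta_1^3 > (4\pi - 3\beta_2^3)/3 = (1/3 + 2/r)\pi$. Finally, part $(i)$ at a $1$-vertex reads $2\beta_1^2 + \beta_1^3 = 2\pi$; substituting the two lower bounds yields $2\pi > 2(1-2/r)\pi + (1/3+2/r)\pi = (7/3 - 2/r)\pi$, which rearranges to $r<6$. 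Thus $r\in\{3,4,5\}$, and I would dispatch the two small cases directly: $r=4$ forces $\beta_2^3 = \pi/2 = \beta_2^2$, contradicting \eqref{eq:angle_relations} (distinct face types give distinct spherical interior angles); $r=3$ forces $\beta_1^3 > \pi$, which is impossible for an interior angle of a convex spherical polygon. Hence $r=5$.

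I do not expect a genuine obstacle here: every step is a one-line manipulation of linear relations among the $\beta_i^k$, once the spherical-angle machinery of \cref{res:spherical_interior_angles_are_good} and the equivalence \eqref{eq:angle_relations} are in hand. The only points that need care are that the excess inequality in part $(ii)$ must be kept \emph{strict} (so the chain terminates at $r<6$ rather than $r\le 6$, which is what excludes $r=6$), and that both exceptional vertex types really do occur in $P$ — the $(4,6,4,6)$-vertex by the case analysis closing \cref{sec:442k}, and a $(4^r)$-vertex because $P$ contains $(4,4)$-edges while no $(4,6,4,6)$-vertex (which has strictly alternating incident faces) is incident to such an edge, so some $2$-vertex must avoid all $6$-faces and hence have type $(4^r)$.
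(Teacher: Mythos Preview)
Your argument is correct and follows the paper's proof essentially line for line: the same evaluation of $\beta_2^2,\beta_2^3$ at the two $2$-vertex types, the same lower bounds on $\beta_1^2,\beta_1^3$ from the spherical-excess inequality, the same $(4,4,6)$-identity yielding $r<6$, and the same eliminations of $r=4$ (via \eqref{eq:angle_relations}) and $r=3$ (via $\beta_1^3>\pi$). Your closing remarks on strictness and on the actual occurrence of both $2$-vertex types are apt and match what the paper establishes just before the proposition.
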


To summarize: $P$ is a strictly bipartite polyhedron in which~all 1-vertices are~of type $(4,4,6)$, and all 2-vertices are of types $(4,6,4,6)$ or $(4^5)$,~and both types actually occur in $P$.
This information turns out to be sufficient to uniquely determine the edge-graph of $P$, which is shown in \cref{fig:graph}.

%
\begin{figure}
\centering
\includegraphics[width=0.35\textwidth]{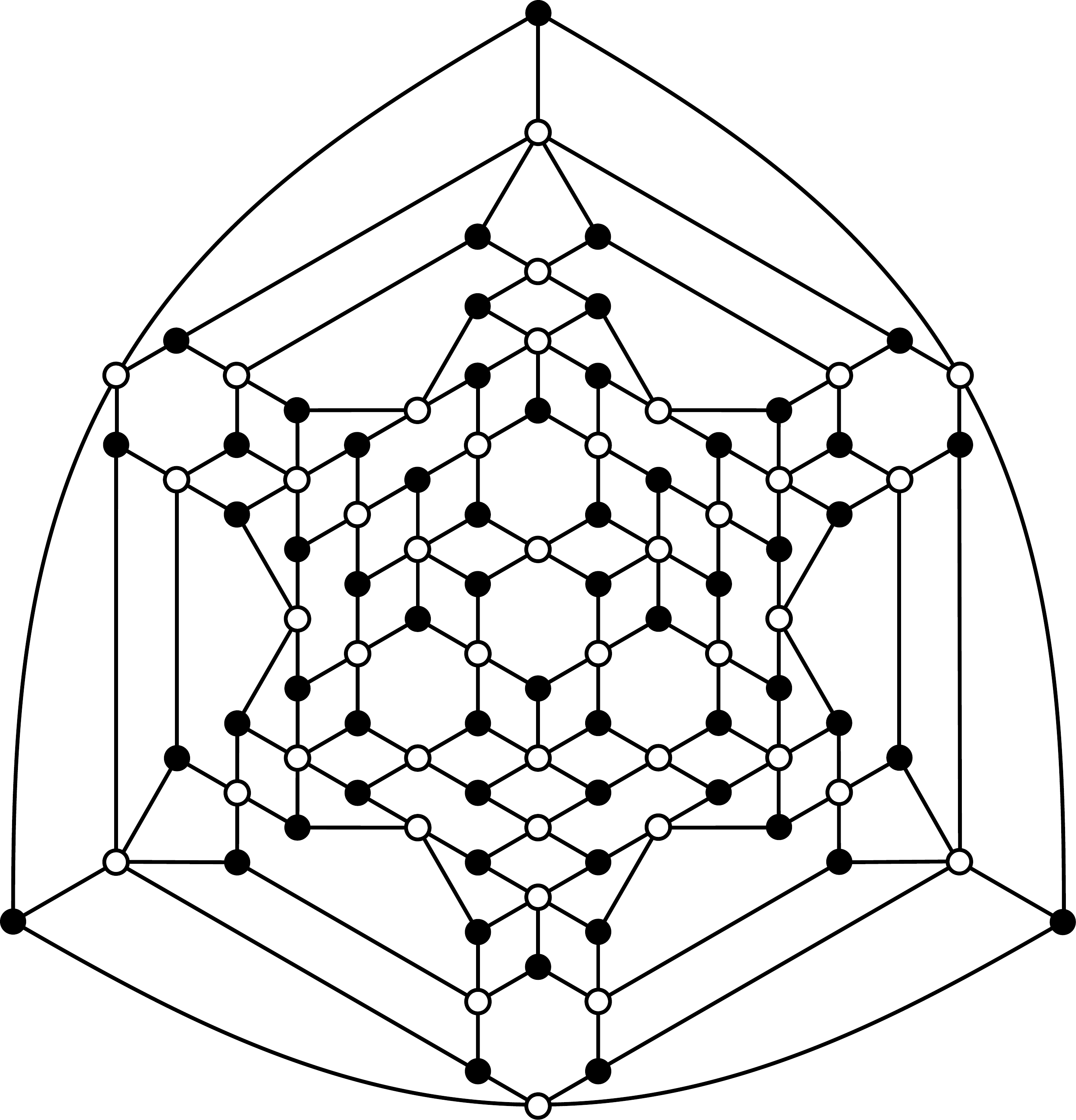}
\caption{The edge-graph of the final candidate polyhedron.}
\label{fig:graph}
\end{figure}
This graph can be constructed by starting with a hexagon in the center with vertices of alternating colors (indicating the partition classes).
One then successively adds further faces (according to the structural properties determined above), layer by layer.
This process involves no choice and thus the result is unique.

As mentioned in \cref{rem:alternative_definiton}, a bipartite polyhedron has an edge in-sphere.
Thus, $P$ is a polyhedral realization of the graph in \cref{fig:graph} with an edge in-sphere.
It is known that any two such realizations are related by a projective transformation \cite{sachs1994coin}. 
One representative $Q\subset\RR^3$ from this class (which we do not yet claim~to coincide with~$P$) can~be~constructed by
applying the following operation $\star$ to each vertex of the regular icosahedron: 
\begin{center}
\includegraphics[width=0.5\textwidth]{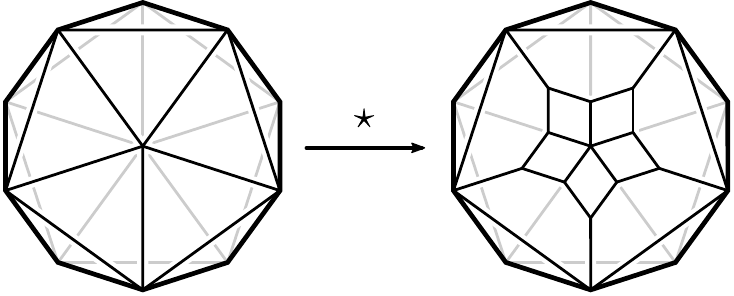}
\end{center}
The operation is performed in such a way, so that
\begin{itemize}
    \item the five new \enquote{outer} vertices of the new 4-gons are positioned in the centers of edges of the icosahedron.
    \item  the edges of each new 4-gon are tangent to a common sphere centered at the center of the icosahedron
\end{itemize} 
The resulting polyhedron $Q$ looks as follows:\\[-1.5ex]
\begin{center}
\includegraphics[width=0.7\textwidth]{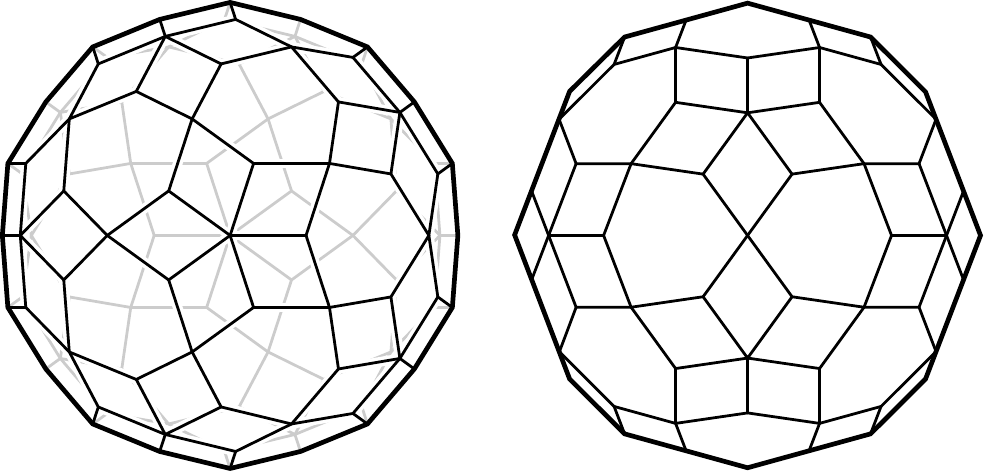}
\end{center}
One can verify that $Q$ has indeed the desired edge-graph.

It is clear from the construction that $Q$ has an edge in-sphere, and any two of its 4-gonal or 6-gonal faces are congruent (as we would expect from a bipartite polyhedron).
Like-wise, $P$ has an edge in-sphere and the same edge-graph. 
Hence, $P$ must be a projective transformation of $Q$.
However, any~projective transformation that is not just a re-orientation or a uniform rescaling will inevitably destroy the property of congruent faces.
In conclusion, we can assume that $P$ is identical to $Q$ (up to scale and orientation).

It remains to check whether $Q$ is indeed a bipartite~polyhedron. For this, recall that any two of the following properties imply the third (\cf\ \cref{rem:alternative_definiton}):
\begin{myenumerate}
    \item $Q$ has an edge in-sphere.
    \item $Q$ has all edges of the same length.
    \item for each vertex $v\in \F_0(Q)$, the distance $\|v\|$ only depends on the partition class of the vertex.
\end{myenumerate}
Now, $Q$ satisfies \itm1 by construction, and it would need to satisfy both \itm2 and \itm3 in order to be bipartite. 
The figure certainly suggests that all edges of $Q$ are of the same length.
However, as we shall show now, $Q$ cannot satisfy both \itm2 and \itm3 at the same time, and thus, can satisfy neither.
In particular, the edges must have a tiny difference in length that cannot be spotted visually, making $Q$ into a remarkable near-miss (we will quantify this below).

For what follows, let us assume that \itm2 holds, that is, that all edges of $Q$ are of the same length, in particular, that all 4-gons are rhombuses.
Our goal is to show that $\|v\|$ depends on the type of the vertex $v\in V_2$ (not only its partition class), establishing that \itm3 does not hold.

For this, start from the following well-known construction of the regular icosahedron from the cube of edge-length 2 centered at the origin.
\begin{center}
\includegraphics[width=0.95\textwidth]{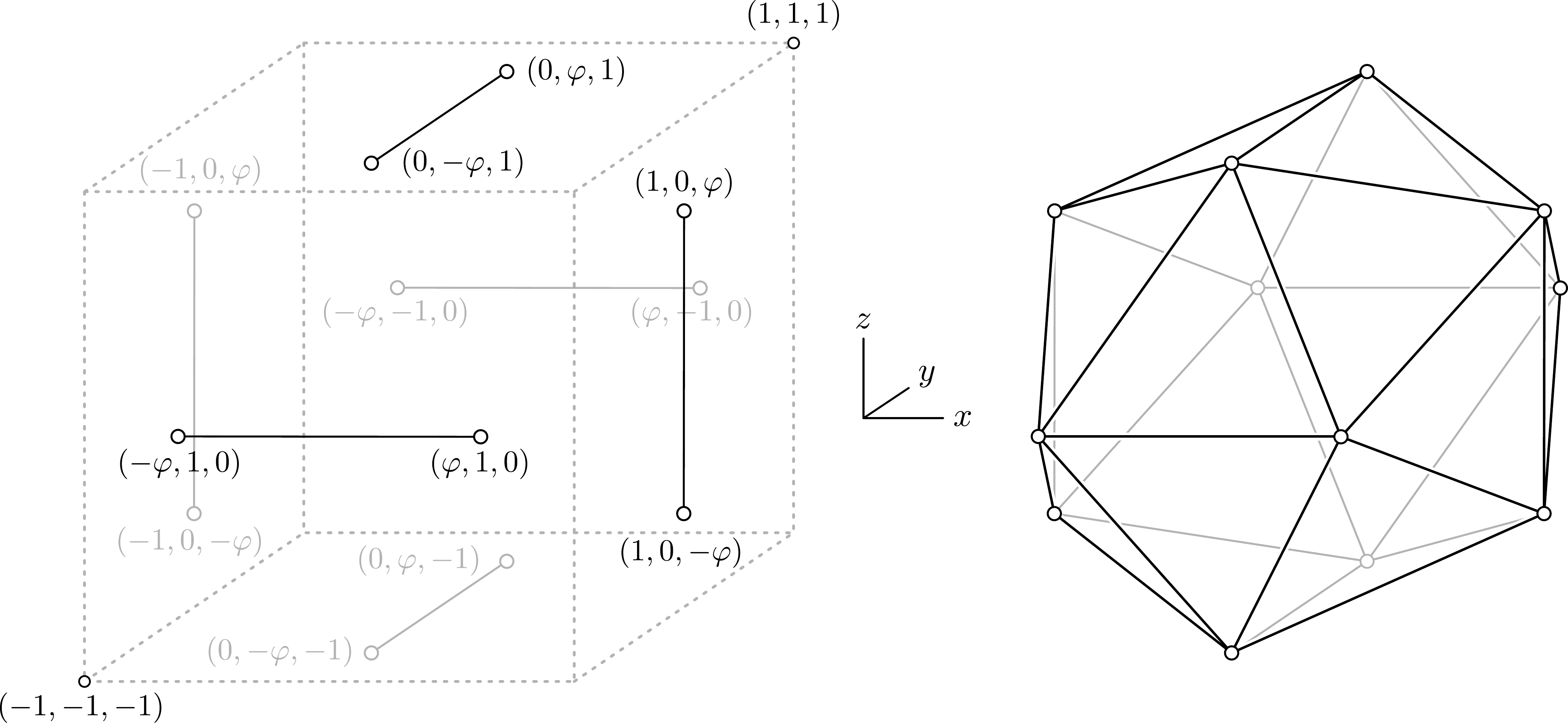}
\end{center}
The construction is as follows:
insert a line segment in the center of each face of the cube as shown in the left image. Each line segment is of length $2\varphi$, where $\varphi\approx 0.61803$ is the positive solution of $\varphi^2=1-\varphi$ (one of the numbers commonly knows as the \emph{golden ratio}).
The convex hull of these line segments gives the icosahedron with edge length $2\varphi$.

It is now sufficient to consider a single vertex of the icosahedron together with its incident faces.
The image below shows this vertex after we applied $\star$.
\begin{center}
\includegraphics[width=0.9\textwidth]{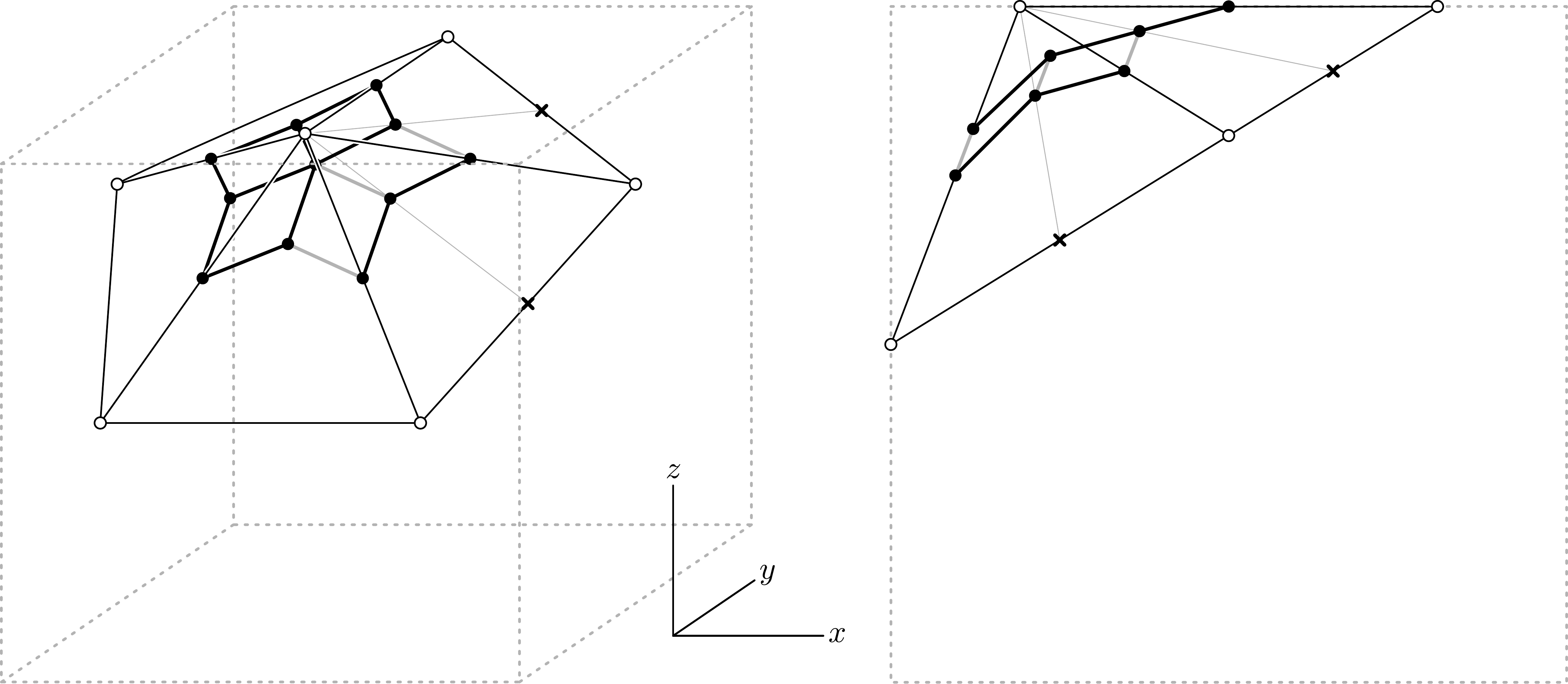}
\end{center}
The image on the right is the orthogonal projection of the configuration on the left onto the $yz$-plane.
%
This projection makes it especially easy to give 2D-coordinates for several important points:
\begin{center}
\includegraphics[width=0.85\textwidth]{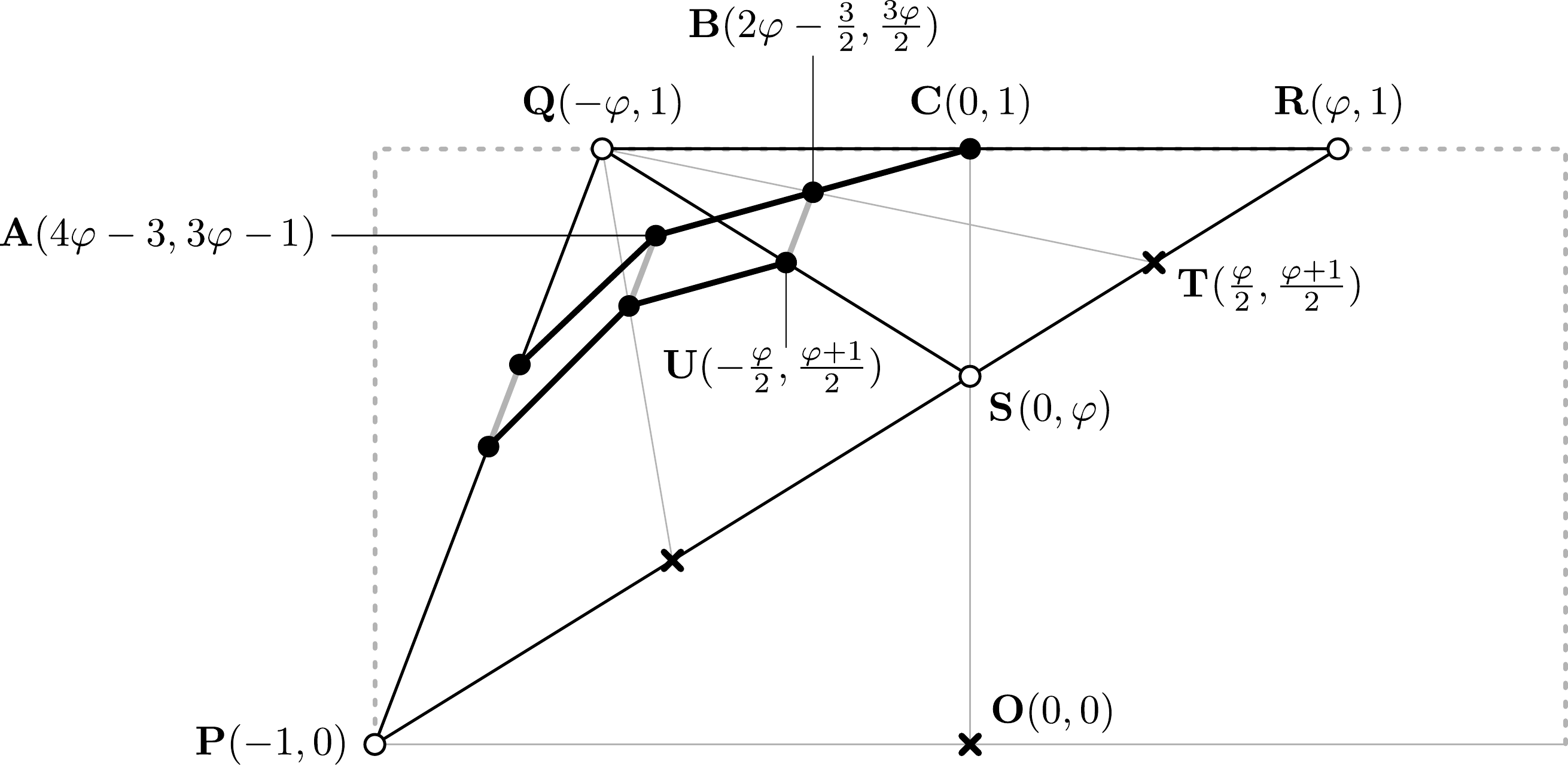}
\end{center}
The points $\mathbf A$ and $\mathbf C$ are 2-vertices of $Q$ of type $(4^5)$ and $(4,6,4,6)$ respectively.
Both points and the origin $\mathbf O$ are contained in the $yz$-plane onto which we projected.
Consequently, distances between these points are preserved during the projection, and assuming that $Q$ is bipartite, we would expect to find $|\overline{\mathbf {OA}}|=|\overline{\mathbf{OC}}|=r_2$.
We shall see that this is not the case, by explicitly computing the coordinates of $\mathbf A$ and $\mathbf C$ in the new coordinate system $(y,z)$.

By construction, $\mathbf C=(0,1)$ and $|\overline{\mathbf{OC}}|=1$.
Other points with easily determined coordinates are $\mathbf P$, $\mathbf Q$, $\mathbf R$, $\mathbf S$, $\mathbf T$ (the midpoint of $\mathbf R$ and $\mathbf S$) and $\mathbf U$ (the midpoint of $\mathbf Q$ and $\mathbf S$).

By construction, the point $\mathbf B$ lies on the line segment $\overline{\mathbf{QT}}$.
The parallel projection of~a~rhombus is a (potentially degenerated) parallelogram, and thus, opposite edges in the projection are still parallel.
Hence, the gray edges in the figure are parallel.
For that reason, the segment $\overline{\mathbf{UB}}$ is parallel to $\overline{\mathbf{PQ}}$.
This information suffices to determine the coordinates of $\mathbf B$, which is now the intersection of $\overline{\mathbf{QT}}$ with the parallel of $\overline{\mathbf{PQ}}$ through $\mathbf U$.
The coordinates are given in the figure.

The rhombus containing the vertices $\mathbf A$, $\mathbf B$ and $\mathbf C$ degenerated to a line.
Its~fourth vertex is also located at $\mathbf B$. 
Therefore, the segments $\overline{\mathbf{CB}}$ and $\overline{\mathbf{BA}}$ are translates of each other.
Since the point $\mathbf B$ and the segment $\overline{\mathbf{CB}}$ are known, this allows the computation of the coordinates of $\mathbf A$ as given in the figure.

We can finally compute $|\overline{\mathbf{OA}}|$.
For this, recall $(*)\,\varphi^{2n}=F_{2n-2}-\varphi F_{2n-1}$, where $F_n$ denotes the $n$-th \emph{Fibonacci number} with initial conditions $F_0=F_1=1$. 
Then
\begin{align*}
|\overline{\mathbf O\mathbf A}|^2 
&= (4\varphi-3)^2+(3\varphi-1)^2 \\
&= 25\varphi^2 - 30\varphi + 10 \\[-1ex]
&\overset{\mathclap{(*)}}= 25(1-\varphi)-30\varphi + 10 \\
&= 35 - 55\varphi 
\\[-1ex] &= 1 + (34 - 55\varphi)\overset{\mathclap{(*)}}=1+\varphi^{10} > 1,
\end{align*}
and thus, $Q$ cannot be bipartite.
Remarkably, we find that 
$$|\overline{\mathbf O\mathbf A}|=\sqrt{1+\varphi^{10}}\approx 1.00405707$$
is only about $0.4\%$ larger than $|\overline{\mathbf O\mathbf C}|=1$, and so while $Q$ is not bipartite, it~is~a~remarkable near-miss.


Since $P$ was assumed to be bipartite, but was also shown to be identical to $Q$, we reached a contradiction, which finally proves \cref{res:strictly_bipartite_polyhedra}, and the goal of the paper is achieved.

%
%
%
%

\section{Conclusions and open questions}
\label{sec:conclusions}

In this paper we have shown that any edge-transitive (convex) polytope in four or more dimensions is necessarily vertex-transitive.
We have done this by classifying all polytopes which simultaneously have all edges of the same length, an edge in-sphere and a bipartite edge graph (which we named \emph{bipartite} polytopes).

The obstructions we~derived for being edge-transitive without being vertex-transitive have been mainly geometrical and less a matter of symmetry (a detailed investigation of the Euclidean symmetry groups was not necessary, but it might be interesting to view the problem from this perspective).
We suspect that dropping convexity or considering combinatorial symmetries instead of geometrical ones will quickly lead to further examples of just edge-transitive structures.
For example, it is easy to find embeddings of graphs into $\RR^d$ with these properties.

Slightly stronger than being simultaneously vertex- and edge-transitive, is being transitive on~\emph{arcs}, that is, on incident vertex-edge pairs.
This additional degree of symmetry allows an edge to be not only mapped onto any other edge, but also onto itself with inverted orientation.
While there are graphs that are vertex- and edge-transitive without being arc-transitive (the so-called \emph{half-transitive} graphs, see \cite{holt1981graph}), we believe it is unlikely that this distinction is necessary for convex polytopes.

\begin{question}
Is there a polytope $P\subset\RR^d$ that is edge-transitive and vertex-tran\-sitive,~but~not arc-transitive?
\end{question}

In a different direction, the questions of this paper naturally generalize to faces of higher dimensions. In general, the interactions between transitivities of faces of different dimensions have been little investigated.
%
%
For example, already the~following question seems to be open:
\begin{question}
For fixed $k\in\{2,...,d-3\}$, are there convex $d$-polytopes for arbitrarily large $d\in\NN$ that are transitive on $k$-dimensional faces without being transitive on either vertices or facets?
\end{question}

Of course, any such question could be attacked by attempting to classify the $k$-face-transitive (convex) polytopes for some $k\in\{1,...,d-2\}$.
It seems to be~un\-clear for which $k$ this problem is tractable (for comparison, $k=0$ is intractable, see \cite{babai1977symmetry}), and it appears that there are no techniques applicable to all (or many) $k$ at the same time.

\vspace{1em}
\textbf{Acknowledgements.} 
The authors thank the anonymous referee for his careful reading and his many remarks that led to the improvement of the article in several places.


\bibliographystyle{unsrt}
\bibliography{literature}

\newpage

\appendix

\section{}
\label{sec:appendix}

\subsection{Geometry}

\begin{proposition}\label{res:sum_to_zero}
Given a set $x_0,...,x_d\in\RR^d\nozero$ of $d+1$ vectors with pair-wise negative inner product,
then there are \ul{positive} coefficients $\alpha_0,...,\alpha_d>0$ with
$$\alpha_0 x_0+\cdots +\alpha_d x_d=0.$$
\iftrue
\begin{proof}
We proceed by induction. 
The induction base $d=1$ which is trivially true.
 
Now suppose $d\ge 2$, and, W.l.o.g.\ assume $\|x_0\|=1$.
Let $\pi_0$ be the orthogonal projection onto $x_0^\bot$, that is, $\pi_0(u):= u - x_0\<x_0,u\>$. 
In particular, for $i\not=j$ and $i,j>0$
$$\<\pi_0(x_i),\pi_0(x_j)\> = \underbrace{\<x_i,x_j\>}_{<0} - \underbrace{\<x_0,x_i\>}_{<0}\underbrace{\<x_0,x_j\>}_{<0} < 0.$$
Then $\{\pi(x_1),\...,\pi_0(x_d)\}$ is a set of $d$ vectors in $x_0^\bot\cong\RR^{d-1}$ with pair-wise negative inner product.
By induction assumption there are positive coefficients $\alpha_1,...,\alpha_d>0$ so that $\alpha_1\pi_0(x_1)+\cdots+\alpha_d\pi_0(x_d)=0$.

Set $\alpha_0:=-\<x_0,\alpha_1x_1+\cdots+\alpha_d x_d\>>0$.
We claim that $x:=x_0\alpha_0+\cdots+\alpha_dx_d=0$.
Since $\RR^d=\Span\{x_0\}\oplus x_0^\bot$, it suffices to check that $\<x_0,x\>=0$ as well as $\pi_0(x)=0$. This follows:
\begin{align*}
\<x_0,x\> &= \alpha_0\underbrace{\<x_0,x_0\>}_{=1} + \underbrace{\<x_0,\alpha_1x_1+\cdots+\alpha_d x_d\>}_{=-\alpha_0}=0, \\
\pi_0(x) &= \alpha_0 \underbrace{\pi_0(x_0)}_{=0} + \underbrace{\alpha_1\pi_0(x_1)+\cdots+\alpha_d\pi_0(x_d)}_{=0}=0.
\end{align*}
\end{proof}

\else

\begin{proof}
Let $X=(x_0,...,x_d)\in\RR^{d\x(d+1)}$ be the matrix with the $x_i$ as columns.
There are more vectors than dimensions, and thus, the $x_i$ must be linearly dependent, \ie\ there is an $a\in\RR^{d+1}$ with $Xa=0$.
We need to show that $a$ can be chosen so, that all entries are positive.

Consider the matrix $Y:=X\T\! X\in\RR^{(d+1)\x(d+1)}$, which is positive semi-definite and satisfies $Ya=X\T\!(Xa)=0$.
Since eigenvalues of positive semi-definite matrices are non-negative, we found that the smallest eigenvalue of $Y$ is zero, and $a$ is an associated eigenvector.

We have $Y_{ij}=\<x_i,x_j\><0$ for $i\not=j$, and thus, for an appropriately chosen $\eps>0$, $Z:=I-\eps Y$ is a matrix with only positive entries.
Each eigenvalue of $Z$ is of the form $1-\eps \lambda$, where $\lambda$ is an eigenvalue of $Y$.
In particular, the maximal eigenvalue of $Z$ is $1$, and $a$ is an associated eigenvector.
By \emph{Perron-Frobenius}, the entries of $a$ are non-zero and have all the same sign.
In particular, they can be chosen positive.
\end{proof}
\fi
\end{proposition}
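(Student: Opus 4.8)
The plan is to argue by induction on the dimension $d$. The base case $d=1$ is immediate: two nonzero reals $x_0,x_1$ with $x_0x_1<0$ have opposite signs, so $|x_1|\,x_0+|x_0|\,x_1=0$ with both coefficients positive.

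For the inductive step I would assume the statement in dimension $d-1$ and, after rescaling, take $\|x_0\|=1$. Let $\pi_0\colon\RR^d\to x_0^\bot$ be the orthogonal projection $\pi_0(u)=u-\langle x_0,u\rangle x_0$. The point is that $\pi_0(x_1),\dots,\pi_0(x_d)$ is again a family of $d$ \emph{nonzero} vectors with pairwise negative inner products, now inside the $(d-1)$-dimensional space $x_0^\bot$. Indeed, for $1\le i<j\le d$,
\[
\langle\pi_0(x_i),\pi_0(x_j)\rangle=\langle x_i,x_j\rangle-\langle x_0,x_i\rangle\langle x_0,x_j\rangle<0,
\]
since $\langle x_i,x_j\rangle<0$ while $\langle x_0,x_i\rangle\langle x_0,x_j\rangle$ is a product of two negative numbers, hence positive. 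Moreover $\pi_0(x_i)\neq 0$: if it vanished, $x_i$ would be a scalar multiple of $x_0$, necessarily a negative one since $\langle x_0,x_i\rangle<0$, and then for any third vector $x_j$ (which exists because $d\ge 2$) we would have $\langle x_0,x_j\rangle<0$ but $\langle x_i,x_j\rangle$ a negative multiple of $\langle x_0,x_j\rangle$, i.e.\ positive, contradicting $\langle x_i,x_j\rangle<0$.

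By the induction hypothesis there are positive coefficients $\alpha_1,\dots,\alpha_d>0$ with $\sum_{i=1}^d\alpha_i\,\pi_0(x_i)=0$. I would then set $\alpha_0:=-\big\langle x_0,\sum_{i=1}^d\alpha_i x_i\big\rangle$, which is positive because each $\langle x_0,x_i\rangle$ is negative and each $\alpha_i$ positive, and verify that $x:=\alpha_0 x_0+\sum_{i=1}^d\alpha_i x_i$ is the zero vector by checking its two components in the orthogonal decomposition $\RR^d=\Span\{x_0\}\oplus x_0^\bot$: one has $\pi_0(x)=\alpha_0\pi_0(x_0)+\sum_i\alpha_i\pi_0(x_i)=0$ since $\pi_0(x_0)=0$, and $\langle x_0,x\rangle=\alpha_0\|x_0\|^2+\langle x_0,\sum_i\alpha_i x_i\rangle=\alpha_0-\alpha_0=0$. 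This closes the induction.

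The computations are all routine; the one point that needs care — and the closest thing to an obstacle — is the nonvanishing of the projections $\pi_0(x_i)$, which is exactly where the hypothesis $d\ge 2$ enters. An alternative, purely linear-algebraic proof sidesteps even this: the Gram matrix $Y=X^\top X$ of $X=(x_0,\dots,x_d)$ is positive semidefinite, singular (more columns than rows), and has strictly negative off-diagonal entries, so for small $\eps>0$ the matrix $I-\eps Y$ is entrywise positive with spectral radius $1$; Perron--Frobenius then forces any nullvector of $Y$, equivalently of $X$, to have all entries of a single sign, which may be taken positive.
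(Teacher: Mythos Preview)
Your proof is correct and follows essentially the same inductive projection argument as the paper's: project onto $x_0^\bot$, apply the induction hypothesis there, and recover $\alpha_0$ from the component along $x_0$. You are in fact slightly more careful than the paper in verifying that the projections $\pi_0(x_i)$ are nonzero (which the paper leaves implicit), and the Perron--Frobenius alternative you sketch at the end is precisely the second proof the authors had prepared but commented out.
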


\begin{proposition}\label{res:simple_vertex}
Let $P\subset\RR^3$ be a polyhedron with $v\in\F_0(P)$ a vertex of degree three.
The interior angles of the faces incident to $v$ determine the dihedral angles at the edges incident to $v$ and vice versa.
\begin{proof}
For $w_1,w_2,w_3\in\F_0(P)$ the neighbors of $v$, let $u_i:=w_i-v$ denote the direc\-tion of the edge $e_i$ from $v$ to $w_i$.
Let $\sigma_{ij}$ be the face that contains $v,w_i$ and $w_j$.
Then $\angle(u_i,u_j)$ is the interior angle of $\sigma_{ij}$ at $v$.

The set $\{u_1,u_2,u_3\}$ is uniquely determined (up to some orthogonal transformation) by the angles $\angle(u_i,u_j)$.
Furthermore, since $P$ is convex, $\{u_1,u_2,u_3\}$ forms a basis of $\RR^3$, and this uniquely determines the \emph{dual basis} $\{n_{12}, n_{23}, n_{31}\}$ for which $\<n_{ij},u_i\>=\<n_{ij},u_j\>=0$.
In other words, $n_{ij}$ is a normal vector to $\sigma_{ij}$.
The dihedral angle at the edge $e_j$ is then $\pi-\angle(n_{ij},n_{jk})$, hence uniquely determined.
The other direction is analogous, via constructing $\{u_1,u_2,u_3\}$ as the dual basis to the set of normal vectors. 
\end{proof}
\end{proposition}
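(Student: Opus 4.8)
The plan is to reduce the proposition to the elementary linear-algebra fact that a triple of vectors with prescribed norms is determined, up to an orthogonal transformation, by its matrix of pairwise inner products (its Gram matrix), together with the observation that at a degree-$3$ vertex of a convex polytope the three edge directions form a basis of $\RR^3$.

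First I would fix notation: let $w_1,w_2,w_3\in\F_0(P)$ be the neighbours of $v$, set $u_i:=w_i-v$, and normalise so that $\|u_i\|=1$; let $\sigma_{ij}$ be the face containing $v,w_i,w_j$, so that the interior angle of $\sigma_{ij}$ at $v$ equals $\angle(u_i,u_j)$. Knowing the three interior angles at $v$ is thus the same as knowing the three numbers $\<u_i,u_j\>=\cos\angle(u_i,u_j)$, i.e.\ the full Gram matrix of $\{u_1,u_2,u_3\}$.

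Next I would observe that $\{u_1,u_2,u_3\}$ is a basis of $\RR^3$: since $v$ is a vertex of the convex polytope $P$, its tangent cone $v+\cone\{u_1,u_2,u_3\}$ is pointed and full-dimensional (this is exactly what it means for a degree-$3$ vertex to be simple), so the three edge directions are linearly independent. Hence $\{u_i,u_j\}$ spans the linear part of $\aff(\sigma_{ij})$, and the outward unit normal $n_{ij}$ of $\sigma_{ij}$ is the unit vector perpendicular to both $u_i$ and $u_j$ whose sign is pinned down by convexity, i.e.\ $n_{ij}=\pm u_i\times u_j$ with a definite sign. Because the Gram matrix of $\{u_1,u_2,u_3\}$ determines this triple up to an orthogonal map $T\in\Ortho(\RR^3)$, and $T$ carries the normal configuration of one copy to that of the other, the pairwise angles $\angle(n_{ij},n_{jk})$ are determined by the interior angles. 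Since the dihedral angle at the edge $e_j$ common to $\sigma_{ij}$ and $\sigma_{jk}$ equals $\pi-\angle(n_{ij},n_{jk})$, all three dihedral angles at $v$ are determined.

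For the converse I would run the same argument with the roles of the edge directions and the facet normals swapped: the three dihedral angles determine the three angles $\angle(n_{ij},n_{jk})$, hence the triple $\{n_{12},n_{23},n_{31}\}$ up to an orthogonal transformation; the $u_i$ are recovered (up to the same transformation) as the dual basis, and then the interior angles are read off as $\angle(u_i,u_j)$. The main point requiring care — the only thing I would call an obstacle, and a mild one — is the orientation bookkeeping: one must check that the sign of each $n_{ij}$ is genuinely forced by convexity rather than left ambiguous, and that the dual basis of a triple of vectors generating a pointed, full-dimensional cone again generates a pointed, full-dimensional cone, so that the same geometric picture applies in both directions. Both points follow from the fact that a simple vertex of a convex $3$-polytope has a simplicial, pointed, full-dimensional normal cone.
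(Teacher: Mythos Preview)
Your proof is correct and follows essentially the same approach as the paper: both argue that the interior angles fix the Gram matrix of the edge directions $u_i$, hence the triple $\{u_1,u_2,u_3\}$ up to an orthogonal map, from which the face normals (as the dual basis, or equivalently via cross products) and therefore the dihedral angles $\pi-\angle(n_{ij},n_{jk})$ are determined, with the converse run symmetrically. You are a bit more explicit about the Gram matrix and the orientation bookkeeping, but the underlying argument is the same.
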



\subsection{Computations}
\label{sec:computations}

The edge lengths in a spherical polyhedron are measured as angles between its end vertices. Consider adjacent vertices $v_1^S,v_2^S\in\F_0(P^S)$, then the incident edge has (arc-)length $\ell^S:=\angle(v_1^S,v_2^S)=\angle(v_1,v_2)$.

It follows from \cref{res:angles_between_vertices} that these angles are completely determined by the parameters, hence the same for all edges of $P^S$.

\begin{proposition}\label{res:computation}
For a face $\sigma\in\F_2(P)$ and a vertex $v\in\F_0(\sigma)$, there is a direct relationship between the value of $\alpha(\sigma,v)$ and the value of $\beta(\sigma,v)$.
\begin{proof}
Let $w_1,w_2\in V_2$ be the neighbors of $v$ in the $2k$-face $\sigma$, and set $u_i:=w_i-v$. Then $\angle(u_1,u_2)=\alpha(\sigma,v)$.
W.l.o.g.\ assume that $v$ is a 1-vertex (the argument is equivalent for a 2-vertex).

For convenience, we introduce the notation $\chi(\theta):= 1-\cos(\theta)$.
We find that
%
\begin{align*}
(*)\quad 2\ell^2\cdot\chi(\alpha(\sigma,v)) 
&= \ell^2+\ell^2-2\ell^2\cos(\angle(u_1,u_2)) \\
&= \|u_1\|^2+\|u_2\|^2 - 2\<u_1,u_2\> \\
&= \|u_1-u_2\|^2 
 = \|w_1-w_2\|^2 \\
&= \|w_1\|^2+\|w_2\|^2-2\<w_1,w_2\>\\
&= r_2^2+r_2^2-r_2^2\cos\angle(w_1,w_2) 
 = 2r_2^2\cdot\chi(\angle(w_1,w_2)).
\end{align*}

The side lengths of the spherical triangle $w_1^Sv^Sw_2^S$ are $\angle(w_1,w_2), \ell^S$ and $\ell^S$.
By the spherical law of cosine\footnote{$\cos(c)=\cos(a)\cos(b)+\sin(a)\sin(b)\cos(\gamma)$, where $a,b$ and $c$ are the side lengths (arc-lengths) of a spherical triangle, and $\gamma$ is the interior angle opposite to the side of length $c$.} we obtain
\begin{align*}
\cos\angle(w_1,w_2) 
&= \cos(\ell^S)\cos(\ell^S) + \sin(\ell^S)\sin(\ell^S)\cos(\beta(\sigma,v)) \\
&= \cos^2(\ell^S) + \sin^2(\ell^S) (\cos(\beta(\sigma,v))-1+1) \\
&= [\cos^2(\ell^S) + \sin^2(\ell^S)] + \sin^2(\ell^S) (\cos(\beta(\sigma,v))-1) \\
&= 1-\sin^2(\ell^s)\cdot\chi(\beta(\sigma,v))\\
\implies\quad \sin^2(\ell^S)\cdot \chi(\beta(\sigma,v)) &= \chi(\angle(w_1,w_2))  \overset{(*)}= \Big(\frac{\ell}{r_2}\Big)^2\cdot \chi(\alpha(\sigma,v)) .
\end{align*}
%
%
\end{proof}
\end{proposition}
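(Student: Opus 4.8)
The plan is to relate $\alpha(\sigma,v)$ and $\beta(\sigma,v)$ through the common intermediate quantity $\angle(w_1,w_2)$, where $w_1,w_2\in\F_0(\sigma)$ are the two neighbours of $v$ inside the face $\sigma$: I would first express $\cos\angle(w_1,w_2)$ in terms of $\cos\alpha(\sigma,v)$ by a planar computation, then in terms of $\cos\beta(\sigma,v)$ by a spherical computation, and finally eliminate $\angle(w_1,w_2)$. Since $\sigma$ is bipartite it is a $2k$-gon, and by \cref{res:faces_of_bipartite} the vertex $v$ and its neighbours $w_1,w_2$ lie in different partition classes; I would assume W.l.o.g.\ that $v\in V_1$, so that $\|w_1\|=\|w_2\|=r_2$, the case $v\in V_2$ being identical with $r_1$ in place of $r_2$. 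Write $u_i:=w_i-v$, so $\|u_i\|=\ell$ and $\angle(u_1,u_2)=\alpha(\sigma,v)$. By \cref{res:0_in_P} we have $0\in\mathrm{int}(P)$, so the central projection, and with it the spherical triangle $v^Sw_1^Sw_2^S$, is well defined.

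\textbf{The planar step.} Applying the law of cosines to the triangle $\conv\{v,w_1,w_2\}$ twice — once computing $\|w_1-w_2\|^2$ from the vertex $v$ using $\|u_i\|=\ell$ and the angle $\alpha(\sigma,v)$, and once from the origin using $\|w_i\|=r_2$ and the angle $\angle(w_1,w_2)$ — and equating the results gives
\[
\ell^2\bigl(1-\cos\alpha(\sigma,v)\bigr)=r_2^2\bigl(1-\cos\angle(w_1,w_2)\bigr).
\]

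\textbf{The spherical step.} In the spherical triangle $v^Sw_1^Sw_2^S$ the two sides incident to $v^S$ have arc-length $\ell^S=\angle(v,w_1)=\angle(v,w_2)$, which by \cref{res:angles_between_vertices} is determined by the parameters alone (so, being the length of a genuine edge, $\ell^S\in(0,\pi)$ and $\sin\ell^S\neq0$), the third side has arc-length $\angle(w_1,w_2)$, and the spherical angle at $v^S$ is $\beta(\sigma,v)$. The spherical law of cosines then rearranges to
\[
1-\cos\angle(w_1,w_2)=\sin^2(\ell^S)\bigl(1-\cos\beta(\sigma,v)\bigr).
\]
Eliminating $\angle(w_1,w_2)$ between the two displays yields $\sin^2(\ell^S)\bigl(1-\cos\beta(\sigma,v)\bigr)=(\ell/r_2)^2\bigl(1-\cos\alpha(\sigma,v)\bigr)$, and since $\cos$ is injective on $[0,\pi]$ — where all of the angles involved lie — and $\sin\ell^S$, $\ell$, $r_2$ are nonzero, this identity lets one recover $\beta(\sigma,v)$ from $\alpha(\sigma,v)$ and conversely.

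\textbf{Main obstacle.} There is essentially no obstacle: this is routine spherical trigonometry. The only points deserving a sentence of care are the non-degeneracy facts — that $\conv\{0,v,w_i\}$ is an honest (non-degenerate) triangle, which is what makes $\ell^S\in(0,\pi)$ and the spherical triangle nondegenerate — and the observation that $\ell^S$ is the same for every edge of $P$, so that the right-hand side of the final identity depends only on the parameters; combined with \cref{res:unique_shape} this is what will subsequently justify introducing the well-defined symbol $\beta_i^k$.
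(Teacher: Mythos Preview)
Your proposal is correct and follows essentially the same approach as the paper: compute $\|w_1-w_2\|^2$ two ways to relate $\alpha(\sigma,v)$ to $\angle(w_1,w_2)$, then apply the spherical law of cosines in the isoceles spherical triangle $v^Sw_1^Sw_2^S$ to relate $\angle(w_1,w_2)$ to $\beta(\sigma,v)$, and eliminate. The paper packages $1-\cos(\cdot)$ into a helper function $\chi$ but otherwise the argument is identical; your added remarks on non-degeneracy and injectivity of $\cos$ on $[0,\pi]$ are fine and slightly more explicit than the paper.
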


\end{document}